\newtheorem{lem}{Lemma}[section]
\newtheorem{prop}[lem]{Proposition}
\newtheorem{thm}[lem]{Theorem}
\newtheorem{cor}[lem]{Corollary}
\newtheorem{df}{Definition}[section]
\newtheorem{exmpl}{Example}[section]
\newtheorem{rem}[lem]{Remark}
\newcommand{\Aut}{\mathrm{Aut}}
\def\aut{\operatorname {Aut}} %%% Automorphism
\def\C{{\mathbb C}}  %%% the set of complex numbers
\def\deg  {{\rm deg}}
\def\dim{\operatorname{dim}}  %%% dimension
\def\E{{\mathbb E}}    %%%   for "expectation value" (in math mode)
\def\Hom{\operatorname{Hom}} %%% Homomorphisms 
\def\id{\operatorname{id}}  %%% Identity operator
\def\ker{\operatorname{ker}}  %%% kernel 
\def\mod{\operatorname{mod}} %%% mod for modulo
\def\spec{\operatorname{Spec}}
\def\specf{\operatorname{Specf}}
\def\supp{\operatorname{supp}}
\def\OO{\cal O}  %%% for holomorphic 
\def\pr{\operatorname{pr}} %%% for projection
\def\Q{{\mathbb Q}}   %%% for Q (in math mode)  set of rational numbers
\def\Z{{\mathbb Z}}    %%%   for Z (in math mode)
\def\N{{\mathbb N}}    %%%   for N (in math mode)
\title{\bf Almost Commutative Probability Theory}
\author{Roland Friedrich and John McKay}
\begin{document}
%\changefont{cmtt}{m}{n}
\maketitle
\begin{abstract}
We solve two longstanding major problems in Free Probability. This is achieved by generalising the theory to one with values in arbitrary commutative algebras. We prove the existence of the multi-variable $S$-transform, and show that it is naturally realised as a faithful linear representation. Further, we prove that in dimension one, the analog of the classical relation between addition and multiplication of independent random variables holds for free random variables, if the co-domain is an algebra over the rationals. In this case the multiplicative problem can be reduced to the additive one, which is not true in dimensions greater than one.
Finally, we classify the groups which arise as joint distributions of $n$-tuples of non-commutative random variables, endowed with the free convolution product, which is the binary operation that encodes the multiplication of free $n$-tuples.   
\end{abstract}
\section{Introduction}
Free probability, which was originated by Voiculescu~\cite{VDN}, is a non-commutative probability theory with the central notion of freeness, instead of the well-known concept of independence from classical probability. 

In order to get a better understanding, let us illustrate both notions, which are algebraic. Roughly speaking, a probability space can be thought of as given by two, not-necessarily commutative, algebras $\mathcal{A},\mathcal{B}$, over some unital ring $k$, and a $k$-linear functional $\phi$, which is unital, i.e. $\phi:\mathcal{A}\rightarrow \mathcal{B}$ such that $\phi(1_{\mathcal{A}})=1_{\mathcal{B}}$. The elements of $\mathcal{A}$ are called $\mathcal{B}$-valued random variables. 

However, $\phi$ will in general not respect the given multiplicative structure, but for certain pairs of elements, it will do so. In classical probability, independence characterises those  elements whose product factorises with respect to the linear functional. 

On the other hand,  the property of freeness is related to the kernel of the linear functional $\phi$, and it requires certain products of elements of the kernel to remain therein.

It is customary to associate to a random variable $x\in\mathcal{A}$ an infinite, $\mathcal{B}$-valued vector, called its $\bullet$-transform, up to specification of $\bullet$, such as e.g. its distribution which would be given by its moment series $\mathcal{M}(x)\in\mathcal{B}^{\N}$.

Voiculescu solved in the 1980s the one-dimensional addition and multiplication problem for free random variables. He introduced the binary operations, $\boxplus_V$ and $\boxtimes_V$, on the space of distributions, and showed that in each case the resulting structure is that of an infinite-dimensional complex Lie group. 

He overcame the implicit and unfamiliar nature of the definitions of his operations, hardly accessible to direct computations, when he found two group isomorphisms onto the set of one-variable formal power series with the usual operations, namely addition and multiplication. These isomorphisms are called the $\mathcal{R}_V$-transform and the $\mathcal{S}_V$-transform, respectively, and are shown in Fig.~\ref{figure_moments}. 
\begin{figure}
\begin{center}
\[
\begin{xy}
  \xymatrix{
  (\mathcal{A},+,\cdot)\ar[d]_{\mathcal{M}:~\text{moments}}^{\text{additive}}\ar[rr]^{\mathcal{M}:~\text{moments}}_{\text{multiplicative}}&&{(\C^{\N},\boxtimes_V)_{\text{implicit}}}\ar[d]^{\mathcal{S}_V}\\
{\left(\C^{\N},\boxplus_V\right)_{\text{implicit}}}\ar[r]^{\mathcal{R}_V}&\left(\C^{\N},+\right)_{\text{explicit $\&$ familiar}}\ar@{-->}[r]^{\exists?}& \left(\C^{\N},\cdot\right)_{\text{explicit $\&$ familiar}}\ar@{-->}[l]
     }
\end{xy}
\]
\caption{The one-variable addition and multiplication problem for pairs of free non-commutative random variables, in the moment picture, as solved by Voiculescu~\cite{VDN}.}
\label{figure_moments}
\end{center}
\end{figure}

Since then, motivated by the classical case, the relation, if any, between the one-dimensional additive and multiplicative convolution, remained open, cf.~\cite{S97,NS}. 

Speicher~\cite{S97}, made the far-reaching discovery that an alternative, purely combinatorial description of free probability can be given, with the central ingredients being non-crossing partitions and free cumulants.

In the multi-variable case, the solutions to the addition and multiplication problem were found by Speicher and Nica~\cite{S97,N,NS}, based on the use of free cumulants. Further, a transformation, which involves the free cumulants was introduced, and which assigns a power series in $n$ non-commuting variables with complex coefficients to an $n$-tuple of non-commutative random variables. This map is called the $\mathcal{R}$-transform.

As with the one-dimensional case, they proved that the sets arising as $\mathcal{R}$-transforms, can be endowed with two binary operations which encode the addition and multiplication of free $n$-tuples of non-commutative random variables, such that they become in general non-abelian groups. However, contrary to the original situation for moments, they provided explicit and computable formulae for both operations, and also gave explicit transformation rules between moments and free cumulants, see~Figure~\ref{figure_cumulant}.

In the additive case the group structure is simply the addition of the $\mathcal{R}$-transforms, and in the multiplicative case, it is given by the so-called boxed convolution of the $\mathcal{R}$-transforms, which involves summation over non-crossing partitions. Even though this combinatorial operation is completely explicit and concrete, the desire to find an isomorphism onto a well-known group with an explicit multiplication rule, remained. This elusive isomorphism is commonly considered as the generalisation of the $S$-transform. 

Therefore, the problem of finding the higher-dimensional $S$-transform can be stated as the problem of understanding the group structure arising from the boxed convolution. 

Mastanak and Nica~\cite{MN1,MN2} approached this problem recently, by using Hopf algebraic methods in order to study the boxed convolution. In the one-dimensional case they establish, amongst other things, the connection with the Hopf algebra of symmetric functions, but the higher-dimensional case still remains unclear.

The starting point for our investigations is the realisation of a surprising connection between Witt vectors and Free Probability~\cite{FMcK1,FMcK2}, which suggests a more general, categorical approach.  
\begin{figure}
\begin{center}
\[
\begin{xy}
  \xymatrix{
  (\mathcal{A},+,\cdot)\ar[d]_{\mathcal{R}:~\text{free cumulants}}^{\text{additive}}\ar[rrr]^{\mathcal{R}:~\text{free cumulants}}_{\text{multiplicative}}&&&{(\C^{\N},\boxtimes)_{\text{explicit, but not familiar}}}\ar@{-->}[d]^{\exists\mathcal{S}?}\\
{\left(\C^{\N},+\right)_{\text{explicit $\&$ familiar}}}&&&\left(\C^{\N},\text{$\exists$ explicit $\&$ familiar operation?}\right)
     }
\end{xy}
\]
\caption{The multi-variable addition and multiplication problem for $n$-tuples of free random variables, in the free cumulant picture, as solved by Speicher and Nica~\cite{S97,N,NS}.}
\label{figure_cumulant}
\end{center}
\end{figure}
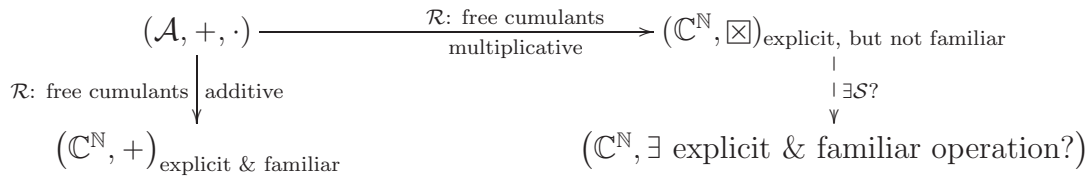
Our main tool are affine group schemes, which we apply to the generalisation of the usual  theory. 

Instead of the purely complex valued setting, we take as domain arbitrary algebras over a commutative unital ring $k$, and as co-domain, arbitrary commutative algebras over $k$. 
In the first place, this is possible, because the groups arising from the combinatorial description are algebraic, i.e. defined by polynomials with integer coefficients. 

Generalisations of the complex-valued situation have already been earlier developed~\cite{VDN,S98}, namely as the theory of ``free products with amalgamation" (FPwA). In that setting, one considers arbitrary, not-necessarily commutative, algebras, e.g. operators, as target. This ``doubly" non-commutative theory is however less understood. 

If we specialise FPwA to algebras over commutative algebras, then it reduces, as it is defined, to maps into the ground ring, which corresponds to a special case in our framework. In particular, in the more general setting one has to take care of some fine points, as e.g. the original notion of freeness has to be supplemented.   

We give answers to all the questions mentioned above, so that the question marks in Figures~\ref{figure_moments} and~\ref{figure_cumulant}, can be removed. 

A word concerning the style of this article is in place; it is written fairly self-contained, with examples and explicit proofs, up to the point were the connection with the relevant theorems in the literature is made.  

Also, for the moment we leave out the generalisation of the analytic aspects, which can be naturally incorporated into this framework, and further the discussion of what is called ``free probability of type $B$", which simply corresponds to certain rings with nilpotent elements.

Finally, we have to explain the choice of the title. ``Almost commutative" is used in different algebraic contexts, but it always expresses the fact that on a ``scale of non-commutativity" one might consider it as the ``first" step away from the abelian case. This will indeed become justified, after we have characterised the groups.

\section{Preliminaries and examples}
\label{PreEx}
In the following we shall assume $k$ to be at least a commutative ring with unit, unless further specified. Let $\mathbf{Set}$ denote the category of sets, $\mathbf{Grp}$ the category of groups, $\mathbf{cRing}$ the category of commutative unital rings, $\mathbf{Alg}_k$ the category of unital $k$-algebras and $\mathbf{cAlg}_k$ the category of commutative $k$-algebras. For $R\in\mathbf{cRing}$, we denote by $R^{\times}$ the multiplicative {group of units} (invertible elements) of $R$.

Consider for an integer $s\geq1$, the alphabet $[s]:=\{1,\dots,s\}$. A word $w$ is  a finite sequence (string) $(i_1\dots i_n)$ of elements $i_j\in[s]$, called letters. The set of all finite words over the alphabet $[s]$, including the empty word $\emptyset$, is denote by $[s]^*$. We define $[s]^*_+$ as $[s]^*\setminus\emptyset$, and for $n\in\N$,  let $[s]^*_n:=\{w\in[s]^*~|~|w|\leq n\}$ and $([s]^*_+)_n:=\{w\in[s]_+^*~|~1\leq|w|\leq n\}$, respectively.  

The set $[s]^*$ is a {\em monoid} with the multiplication $\circ$ given by concatenation of words, i.e. $( i_1\dots i_n)\circ(j_1\dots j_m):=(i_1\dots i_n j_1\dots j_m)$, and 
unit $1$, corresponding to the empty word $\emptyset$. For $n\in\N^{\times}$, the length $|w|$ of a word $w=(i_1\dots i_n)$ is $n$ and otherwise $|\emptyset|=0$.
The set $[s]^*$ is countable and graded by word length.

Let us first discuss the {\bf non-commutative power series} with commutative coefficients.

For $s\geq2$ and $R\in\mathbf{cRing}$, we denote by $R\langle\langle z_1,\dots, z_s\rangle\rangle$ the set of all formal power series in $s$ non-commuting indeterminates 
$\{z_1,\dots, z_s\}$ with coefficients in $R$, and by $R\langle z_1,\dots, z_s\rangle$ the respective non-commutative polynomials.

To every word $w=(i_1\dots i_n)\in\{1,\dots s\}^n$, $n\in\N^{\times}$, 
corresponds a {\bf monomial} 
$$
z_w:=z_{i_1}\cdots z_{i_n}\qquad\text{with $i_j\in\{1,\dots, s\}$ for $j=1,\dots n$},
$$
and the empty word is associated with the unit, i.e. $z_{\emptyset}:=1_R$. 

The {\bf degree}  of a monomial $z_w$ is 
$
\deg(z_w):=|w|.
$
A generic power series $f=f(z_1,\dots, z_s)$, with coefficients in $R$, can be written as
 \begin{equation}
 f(z_1,\dots z_s)=\sum_{w\in[s]^*} \alpha_w z_w=\alpha_0+\sum_{n=1}^{\infty}\sum_{i_1,\dots, i_n=1}^{\infty}\alpha_{i_1,\dots,i_n} z_{i_1}\cdots z_{i_n}~,
 \end{equation}
where $a_w$ or $a_{(i_1\dots i_n)}$ denotes the coefficient of $z_w$. Alternatively, we shall use $f_w$ to denote the coefficient corresponding to $z_w$.

For $s\geq2$, the set $R\langle\langle z_1,\dots, z_s\rangle\rangle$ has a natural structure of a non-commutative $R$-algebra.  The $R$-module part is defined by component-wise addition and scalar multiplication, and the product of two formal series $f\cdot g$ is given by 
\begin{equation}
\label{noncom_prod}
(f\cdot g)_w:=\sum_{\substack{w_1,w_2\in[s]^*\\w_1\circ w_2=w}}f_{w_1}\cdot_R g _{w_2}.
\end{equation}

For a single letter alphabet, we obtain the set of one-variable commutative formal power series $R[[z]]$ and polynomials $R[z]$, respectively.

We denote by $R_+\langle\langle z_1,\dots, z_s\rangle\rangle$ the set of {\bf power series without constant term}, which is a free $R$-module with point-wise addition and scalar multiplication.

Let us note that two other equivalent descriptions are as infinite strings or trees.

\subsection{Group schemes}
\label{Sec:GSch}
Let us recall basic facts from the theory of algebraic and formal groups. References are~\cite{D72,Froh,H1,Hum,Mil}.
 
Let $\mathcal{C}$ be a category. Every $A\in\operatorname{obj}(\mathcal{C})$ defines a functor $h^A:\mathcal{C}\rightarrow\mathbf{Set}$ by $h^A:=\Hom_{\mathcal{C}}(A,-)$. A functor $F:\mathcal{C}\rightarrow\mathbf{Set}$ is called {\bf representable}, if $F$ is {\bf naturally isomorphic} to some $h^A$, i.e. there exists a natural transformation $\tau:h^A\rightarrow F$. By the {\bf Yoneda Lemma}, this is equivalent to specifying an element $a\in F(A)$ and then the pair $(A,a)$ is said {\bf to represent $F$}. 

The general situation is now specialised to the category $\mathbf{cAlg}_k$.
\begin{df}
A functor $X:\mathbf{cAlg}_k\rightarrow\mathbf{Set}$, is called a {\bf $k$-functor}. For $A,R\in\mathbf{cAlg}_k$, the functor $h^A:=\Hom_{\mathbf{cAlg}_k}(A,-)$, is denoted as $\spec_k(A)$, and the set $\spec_k(A)(R)$, is called the {\bf spectrum of $R$}. 
\end{df}
\begin{df}
An {\bf affine $k$-scheme} is a $k$-functor which is representable by some $\spec_k(A)$, i.e. naturally isomorphic. It is {\bf algebraic} if the $k$-algebra $A$ is {\em finitely} generated, and {\bf connected} if $A$ is a local ring. 
\end{df}

\begin{df}
A $k$-functor $G:\mathbf{cAlg}_k\rightarrow\mathbf{Grp}$, is called a {\bf $k$-group functor}, and $G$ is an {\bf affine group scheme}, if it is affine as a $k$-functor.
\end{df}

The elements in $\spec_k(A)(R)$ are $k$-algebra morphisms, i.e. for $\lambda\in k$ and $x,y\in A$, we have
$$
\varphi:A\rightarrow R,\qquad \varphi(\lambda xy)=\lambda\varphi(x)\varphi(y)\quad\text{and}\quad \varphi(1_A)=1_R.
$$
If $A$ is a Hopf algebra, an associative product can be defined on $\spec_k(A)(R)$, called the {\bf convolution product}. For $\varphi_1,\varphi_2\in\spec_k(A)(R)$, define
$$
\varphi_1\ast\varphi_2:=\mu\circ(\varphi_1\otimes\varphi_2)\circ\Delta,
$$
as given by the sequence of maps
\[
\begin{CD}
A@>\Delta>>A\otimes_k A@>\varphi_1\otimes\varphi_2>>R\otimes_k R@>\mu>>R.
\end{CD}
\]
The unit element for the convolution product is $\eta\circ\varepsilon$, i.e. the composition of the co-unit with the unit, as shown below
\[
\begin{CD}
A@>\varepsilon>>k@ >\eta>>A.
\end{CD}
\]
More generally the following duality holds:
\begin{prop}[cf. e.g. \cite{D72,Mil,W}]
Hopf algebras over $k$ and affine $k$-group schemes are in bijective correspondence.
\end{prop}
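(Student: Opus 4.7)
The plan is to use the Yoneda Lemma together with the fact that $\spec_k$ converts tensor products into categorical products. The statement should be read as: \emph{commutative} Hopf $k$-algebras correspond to affine $k$-group schemes, since the category $\mathbf{cAlg}_k$ on which $\spec_k$ is defined consists of commutative algebras. Two formal facts will drive the argument. First, since $A\otimes_k B$ is the coproduct in $\mathbf{cAlg}_k$, one has a natural isomorphism of $k$-functors
\begin{equation*}
\spec_k(A\otimes_k B)\;\cong\;\spec_k(A)\times\spec_k(B),
\end{equation*}
and $\spec_k(k)$ is the terminal $k$-functor. Second, by Yoneda, the contravariant assignment $A\mapsto \spec_k(A)$ is fully faithful, so morphisms of affine schemes are identified with $k$-algebra morphisms in the opposite direction.

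For the forward direction, given a commutative Hopf $k$-algebra $(A,m,\eta,\Delta,\varepsilon,S)$, I would equip $\spec_k(A)(R)$ with the convolution product $\varphi_1\ast\varphi_2:=\mu\circ(\varphi_1\otimes\varphi_2)\circ\Delta$ already defined in the text, and define inversion by $\varphi\mapsto\varphi\circ S$. The Hopf axioms translate term-by-term into group axioms: coassociativity of $\Delta$ yields associativity of $\ast$; the counit axiom yields that $\eta_R\circ\varepsilon$ is a two-sided identity; the antipode axiom yields that $\varphi\circ S$ is a two-sided inverse. Naturality in $R$ is immediate, so $\spec_k(A)$ is an affine $k$-group scheme.

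Conversely, if $G=\spec_k(A)$ is an affine $k$-group scheme, the group structures on the sets $G(R)$ assemble into natural transformations
\begin{equation*}
m_G:G\times G\to G,\qquad e_G:\ast\to G,\qquad \iota_G:G\to G,
\end{equation*}
satisfying the group-axiom diagrams. Via the two formal facts above, Yoneda converts these into $k$-algebra morphisms $\Delta:A\to A\otimes_k A$, $\varepsilon:A\to k$, and $S:A\to A$, and the three group-axiom diagrams become exactly the coassociativity, counit, and antipode diagrams. Hence $A$ acquires a commutative Hopf algebra structure.

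Finally, full faithfulness of $\spec_k$ shows that the two constructions are mutually inverse and that, under the correspondence, a morphism of affine group schemes is precisely a morphism of Hopf algebras. The main obstacle is purely bookkeeping: one must trace each Hopf axiom diagram through the identification $\spec_k(A\otimes_k A)\cong\spec_k(A)\times\spec_k(A)$ to verify it is the Yoneda image of the matching group-axiom diagram; in particular one checks that the identity element coming from $e_G$, evaluated at a given $R$, coincides with the element $\eta_R\circ\varepsilon$ produced by the convolution formula. None of these verifications is difficult, but they are the sole content beyond the two formal facts.
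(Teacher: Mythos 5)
Your proof is correct and follows the standard Yoneda/tensor-product argument that the paper itself defers to via its citations to Demazure, Milne and Waterhouse; the two directions you spell out are exactly the convolution product and the dualised group-axiom diagrams already sketched around the statement in Section~\ref{Sec:GSch}. Your remark that the statement must be read with \emph{commutative} Hopf algebras, since $\spec_k$ is defined on $\mathbf{cAlg}_k$, is a correct and worthwhile precision.
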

\begin{exmpl}
\end{exmpl}
\begin{enumerate}
\item The {\bf additive group} $\mathbb{G}_a$ is the functor $R\rightarrow R$, (additive group) $R\in\mathbf{cAlg}_k$, represented by the Hopf algebra $k[X]$, with the co-product given by $\Delta(X)=X\otimes 1+1\otimes X$.
\item The {\bf multiplicative group} $\mathbb{G}_m$ is the functor $R\rightarrow R^{\times}$ (multiplicative group),  $R\in\mathbf{cAlg}_k$, represented by the Hopf algebra $k[X,Y]/(XY-1)\cong k[X,X^{-1}]$, with the co-product given by $\Delta(X)=X\otimes X$.
\end{enumerate}

For $(G,\mu,\iota)$ an algebraic group with identity element $e_G$, let
$A:= k[G]$ be its {\bf co-ordinate algebra}. A compatible co-algebra structure is then induced as follows: 
\begin{itemize}
\item the {\bf co-unit} or {\bf augmentation} is given by  
$$
\varepsilon: k[G]\rightarrow k,\quad f\mapsto f(e_G),
$$
with $k\cong k[e]$ being the affine variety corresponding to the group of one element $e$,\\
\item the {\bf co-product} by
\begin{eqnarray*}
\Delta&:=&\mu^*:k[G]\rightarrow k[G]\otimes_k k[G], \\
f&\mapsto&\sum g_i\otimes h_i\quad\text{if}\quad f(xy)=\sum g_i(x)h_i(y),\quad\text{for $x,y\in G$},
\end{eqnarray*}
\item the {\bf co-inverse} or {\bf antipode} by
$$
S:=\iota^*:k[G]\rightarrow k[G],\quad (\iota^*f)(x)=(f\circ\iota)(x)=f(x^{-1})\quad\text{for all $x\in G$}.
$$
\end{itemize}
It follows from the group axioms that this defines the structure of a {\bf Hopf algebra} on $k[G]$, and conversely, a Hopf algebra structure on $k[G]$ defines the structure of an algebraic group on the variety $G$.

\begin{df}
Let $G$ be an affine group scheme. A homomorphism $\chi:G\rightarrow\mathbb{G}_m$, is called a {\bf character}.
\begin{df} For $H$ a Hopf algebra, the kernel of the co-unit $\ker(\varepsilon)$,  is called the {\bf augmentation ideal} of $H$.
\end{df}

\end{df}
Generally, the following objects are in bijective correspondence:
\begin{center}
\begin{tabular}{lcl} Group $G$ & &Hopf algebra $H$\\ \hline Closed subgroups  & and &  Hopf ideals  \\Normal subgroups & and &  kernels of morphisms \\Characters & and  & group-like elements  \end{tabular}
\end{center}

\begin{df}
A {\bf pro-finite $k$-algebra} $A$, is a topological $k$-algebra which satisfies: 
$A$ is decreasingly filtered by a family of ideals $\{J_i\}_{i\in\N}$, i.e. $A\supset J_i\supset J_{i+1}$, $i\in\N$, such that 
$A/J_i$ is both a {\em discrete} topological space and a {\em finite} $k$-algebra, and the isomorphism 
$$
A\cong\varprojlim_{i\in\N} A/J_i,
$$
holds for the inverse limit. 
\end{df}
\begin{df}
An {\bf affine formal scheme}, is a $k$-functor $X$, which is representable by a pro-finite $k$-algebra $A$, and it is denoted as $X={\specf}_k(A)$. 
\end{df}
\begin{df}
A group object in the category of formal schemes is called a {\bf formal group}.
A formal group $G=\specf_k(A)$ is {\bf smooth} if $A$ is isomorphic to a power series algebra $k[[X_1,\dots, X_n]]$. 
\end{df}

The co-multiplication in a smooth formal group is described by a family of power series. Namely, the coproduct $\Delta:A\rightarrow A\hat{\otimes}_k A$, is given by a set of $n$ formal power series in $2n$ variables, $F(\underline{X},\underline{Y})=(F_1(\underline{X},\underline{Y}),\dots,F_n(\underline{X},\underline{Y}))$, with $\underline{X} := (X_1,\dots,X_n)$, $\underline{Y} := (Y_1,\dots,Y_n)$, and $A\hat{\otimes}_k A\cong k[[\underline{X},\underline{Y}]]$, where $\hat{\otimes}$ denotes the {\em completed} tensor product. In fact, this defines the structure of an {\bf $n$-dimensional formal group law}. 

More generally, let $I$ be an arbitrary set and $\underline{X}=(X_i)_{i\in I}$ a family of {\em commuting} indeterminantes $X_i$, indexed by $I$. Denote by $\mathbf{n}:I\rightarrow\N$ (or equivalently $\mathbf{m}$ etc.) any non-negative integer-valued function with {\bf finite support} $\supp(\mathbf{n})$, i.e. with $\mathbf{n}(i)=0$ for all but finitely many $i\in I$. 

Under the above assumptions, the following quantities are well-defined:
\begin{itemize}
\item $|\mathbf{n}|:=\sum_{i\in I}\mathbf{n}(i)$, the norm of the vector $\mathbf{n}$,
\item $X^{\mathbf{n}}:=\prod_{i\in\supp(\mathbf{n})} X_i^{\mathbf{n}(i)}$, a monomial,
\item the {\bf degree} of a monomial $X^{\mathbf{n}}$ is $\deg(X^{\mathbf{n}}):=|\mathbf{n}|$.
\end{itemize}
The ring of {\bf commuting formal power series} $R[[X_i|i\in I]]=R[[\underline{X}]]$, $R\in\mathbf{cRing}$, is the set of all (infinite) formal sums with $R$-valued coefficients $c_{\mathbf{n}}$ of the form
$$
\sum_{\mathbf{n}} c_{\mathbf{n}}X^{\mathbf{n}},\qquad c_{\mathbf{n}} \in R.
$$
$R[[X_i|i\in I]]$ is a {\em local} ring with the unique maximal ideal $\mathfrak{m}$ given by the set of power series with zero constant term, i.e. $c_{\mathbf{n}}=0$, for $|\mathbf{n}|=0$. Considered as an $R$-module, it has the following direct sum decomposition:
$$
R[[X_i|i\in I]]=R\oplus\mathfrak{m}.
$$
Therefore, every smooth formal group is connected as $A\cong k[[X_1,\dots, X_n]]$ is an $n$-dimensional local ring.

\begin{df}
\label{FG}
A {\bf  formal group law} over $R\in\mathbf{cRing}$, indexed by an (infinite) set $I$, is an $|I|$-tuple of formal power series $F=(F_i)_{i\in I}$, $F_i(\underline{X},\underline{Y})\in R[[X_i, Y_i~|~i\in I]]$, $i\in I$, with
$$
F_i(\underline{X},\underline{Y})=\sum_{\mathbf{m},\mathbf{n}} c_{\mathbf{m},\mathbf{n}}(i)\underline{X}^{\mathbf{m}} \underline{Y}^{\mathbf{n}},\quad\qquad c_{\mathbf{m},\mathbf{n}}(i)\in R,
$$
such that for all $i\in I$, the following properties are satisfied:
\begin{enumerate}
\item $F_i(\underline{X},\underline{0})=X_i$ and $F_i(\underline{0},\underline{Y})=Y_i$.
\item For every $\mathbf{m},\mathbf{n}$, and for almost all $i\in I$, $c_{\mathbf{m},\mathbf{n}}(i)=0$. 
\item $F_i(F(\underline{X},\underline{Y}),\underline{Z}))=F_i(\underline{X},F(\underline{Y},\underline{Z}))$, for $i\in I$.
\item $F$ is {\bf commutative} if $F_i(\underline{X},\underline{Y})=F_i(\underline{Y},\underline{X})$ for all $i\in I$.
\end{enumerate}
\end{df}
The following remarks are in place:
The first property in Definition~(\ref{FG}) implies that every $F_i$ is of the form
$$
F_i(\underline{X},\underline{Y})=X_i+Y_i+\sum_{\begin{subarray}{l} |\mathbf{m}|\geq 1\\ |\mathbf{n}|\geq 1 \end{subarray}} c_{\mathbf{m},\mathbf{n}}(i)X^{\mathbf{m}}Y^{\mathbf{n}}\cong X_i+Y_i\quad \mod(\deg~2).
$$
Therefore, only mixed terms in $X_i$ and $Y_j$ appear in degree~$\geq2$. It follows from the first axiom that no constant term is involved, and so the substitution in 3. is well-defined.  

The first three axioms imply the existence of an {\bf inverse}, cf. e.g.~\cite{Froh,H2}. Namely, for a given $F$, there is a unique sequence $\iota=(\iota_{\ell})_{\ell\in I}$  of power series $\iota_{\ell}(\underline{X})$, such that $F_{\ell}(\underline{X},\iota(\underline{X}))=F_{\ell}(\iota(\underline{X}), \underline{X})=\underline{0}$.

Let us now discuss the relation between {\bf Lie groups} and {\bf Lie algebras} in our context. Instead of looking at the general picture, we shall restrict ourselves to the specificities of formal group laws.

For simplicity, let $I:=\N^{\times}$. As
\begin{equation}
\label{gr_law_bilinear_f}
F_i(\underline{X},\underline{Y})\equiv X_i+Y_i+\underbrace{\sum_{k,l\in\N^{\times}} c^i_{kl} X_lY_k}_{=:B_i(\underline{X},\underline{Y})}\quad\mod (\deg~3),
\end{equation}
the sum $B_i(\underline{X},\underline{Y})$ defines a {\bf bilinear form}, i.e. a sum of quadratic monomials in $X_1,X_2,X_3,\dots$ and $Y_1,Y_2,Y_3,\dots$.

A Lie algebra structure can be defined on $R^{\N^{\times}}$, with the bracket 
$
[~,~]:R^{\N^{\times}}\times R^{\N^{\times}}\rightarrow R^{\N^{\times}},
$
be given by
\begin{equation}
\label{FG_Lie}
[\underline{X},\underline{Y}]:=B(\underline{X},\underline{Y})-B(\underline{Y},\underline{X}),
\end{equation} 
where $B(\underline{X},\underline{Y}):=(B_1(\underline{X},\underline{Y}),B_2(\underline{X},\underline{Y}),\dots, B_n(\underline{X},\underline{Y}),\dots)$ is the vector of bilinear forms with the $i$th component being equal to $B_i$.
In terms of the canonical basis vectors $e_j:=(0,\dots,0,1,0\dots)$, it is given by 
\begin{equation}
[e_j,e_k]=\sum_{i=1}^{\infty} (c^i_{jk}-c^i_{kj}) e_i=(c^1_{jk}-c^1_{kj},c^2_{jk}-c^2_{kj},c^3_{jk}-c^3_{kj},\dots)~.
\end{equation}
$B(\underline{X},\underline{Y})$ is {\em symmetric}, i.e. $B(\underline{X},\underline{Y})=B(\underline{Y},\underline{X})$,   iff the corresponding Lie algebra is abelian.
\begin{df}
The {\bf Lie algebra $L(F)$ of the formal group law}  $F(\underline{X},\underline{Y})$,  is the Lie algebra defined in~(\ref{FG_Lie}). 
\end{df}

Let us recall the main theorems of {\bf Formal Lie Theory} in a condensed form.
\begin{thm}[cf. e.g.~\cite{Froh,H1}]
\label{Q-theorems}
Let $R$ be a $\Q$-algebra, and $F,G$ formal group laws over $R$. 
\begin{enumerate}
\item The formal groups $F$ and $G$ are isomorphic if and only if the Lie algebras $L(F)$ and $L(G)$ are isomorphic, i.e.
$
F\cong G\Leftrightarrow L(F)\cong L(G).
$
\item Let $F$ be a {\bf commutative} formal group law. Then 
\begin{enumerate}
\item $L(F)=0$, i.e. the Lie algebra is trivial, and
\item $F$ is isomorphic to the {\bf additive group law} $\hat{\mathbb{G}}_a$, of dimension $\dim_R(F)$, with
$$
(\hat{\mathbb{G}}_a)_i(\underline{X},\underline{Y}):=X_i+Y_i
\qquad\text{for $i\in\{1,\dots,\dim_R(F)\}$}.
$$
\end{enumerate}
\end{enumerate}
\end{thm}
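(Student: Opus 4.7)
The plan is to prove both parts through the classical exponential/logarithm equivalence between formal group laws and Lie algebras over a $\Q$-algebra, which becomes available precisely because every positive integer is invertible in $R$; I would follow the framework of Fr\"ohlich~\cite{Froh} and Hazewinkel~\cite{H1}, adapted to the infinite index set $I = \N^{\times}$ used here.

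I would first dispose of Part 2(a) and the reverse implication of Part 1, which are direct. Commutativity $F_i(\underline{X},\underline{Y}) = F_i(\underline{Y},\underline{X})$ forces $B_i(\underline{X},\underline{Y}) = B_i(\underline{Y},\underline{X})$ by (\ref{gr_law_bilinear_f}), whence $[\underline{X},\underline{Y}] = B(\underline{X},\underline{Y}) - B(\underline{Y},\underline{X}) = 0$ by (\ref{FG_Lie}); this proves Part 2(a). For the reverse in Part 1, any formal-group isomorphism $\phi:F\to G$ has invertible linear part $\phi^{(1)}$, and matching degree-two coefficients in $\phi(F(\underline{X},\underline{Y})) = G(\phi(\underline{X}),\phi(\underline{Y}))$ shows that $\phi^{(1)}$ intertwines the antisymmetrized bilinear forms, hence the Lie brackets.

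For the forward direction of Part 1, the plan is to build for each $F$ a strict isomorphism $\ell_F : F \to F_{\mathrm{BCH}}(L(F))$, where $F_{\mathrm{BCH}}(\mathfrak{g})$ denotes the formal group law whose coefficients are given by the Baker--Campbell--Hausdorff series in the brackets of $\mathfrak{g}$. Equivalently, one solves the linearization equation $\ell_F(F(\underline{X},\underline{Y})) = F_{\mathrm{BCH}}(\ell_F(\underline{X}),\ell_F(\underline{Y}))$ for the homogeneous components of $\ell_F$ by induction on degree $n$; the $n$-th step is an equation of the form $n\cdot x = (\text{known})$, so invertibility of $n$ in $R$ closes the recursion and determines $\ell_F$ uniquely. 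Given $L(F)\cong L(G)$, we then obtain $F \cong F_{\mathrm{BCH}}(L(F)) \cong F_{\mathrm{BCH}}(L(G)) \cong G$. Part 2(b) is the special case $L(F)=0$: the BCH series collapses to $X_i + Y_i$, so $F_{\mathrm{BCH}}(0) = \hat{\mathbb{G}}_a$ and $\ell_F$ is the desired isomorphism.

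The main obstacle is the infinite-dimensional bookkeeping. The finite-support axiom in Definition~\ref{FG}(2) is what makes each $F_i$ a legitimate power series, but both the recursive construction of $\ell_F$ and the BCH series involve nested substitutions that could a priori spoil this property. The key technical verification is that each coefficient of $\ell_F$ (and of $F_{\mathrm{BCH}}$) is a universal polynomial with rational coefficients in finitely many of the structure constants $c_{\mathbf{m},\mathbf{n}}(i)$ of $F$, so the condition ``$c_{\mathbf{m},\mathbf{n}}(i)=0$ for almost all $i$'' propagates through every stage of the construction. Once this finiteness is verified, the theorem reduces to the classical finite-dimensional formal Lie theory cited.
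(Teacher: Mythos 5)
First, a point of reference: the paper does not prove this theorem. It is recalled from the classical literature and attributed to \cite{Froh,H1}, so there is no internal argument to compare yours against; your sketch is, in outline, the standard proof found in those sources. The easy halves are handled correctly: Part 2(a) follows from (\ref{gr_law_bilinear_f}) and (\ref{FG_Lie}) exactly as you say, and in the reverse implication of Part 1 the degree-two comparison of $\phi(F(\underline{X},\underline{Y}))=G(\phi(\underline{X}),\phi(\underline{Y}))$ does show that $\phi^{(1)}$ intertwines the brackets, since the contributions of $\phi^{(2)}$ are symmetric in $\underline{X},\underline{Y}$ and die under antisymmetrization (and this direction needs no hypothesis on $\Q$). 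Your observation that the finite-support axiom of Definition~\ref{FG} must be propagated through the construction is also the one genuinely non-textbook point in this infinite-index setting, and you identify the right mechanism: every coefficient produced is a universal polynomial with rational coefficients in finitely many structure constants of $F$.

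The gap is in the inductive step of the forward direction. Writing $\ell_F=\mathrm{id}+\ell_F^{(2)}+\ell_F^{(3)}+\cdots$ and comparing degree-$n$ parts of $\ell_F(F(\underline{X},\underline{Y}))=F_{\mathrm{BCH}}(\ell_F(\underline{X}),\ell_F(\underline{Y}))$, the unknown $\ell_F^{(n)}$ enters as $\ell_F^{(n)}(\underline{X}+\underline{Y})-\ell_F^{(n)}(\underline{X})-\ell_F^{(n)}(\underline{Y})$, \emph{not} as $n\cdot\ell_F^{(n)}$: already for $s=1$ the degree-$n$ step asks for a single coefficient $a_n$ with $a_n\bigl((X+Y)^n-X^n-Y^n\bigr)=\Gamma_n(X,Y)$, an overdetermined system rather than a division by $n$. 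What is actually needed is (i) a verification, using associativity of $F$ and of $F_{\mathrm{BCH}}$, that the known remainder $\Gamma_n$ is a \emph{symmetric} two-cocycle, and (ii) Lazard's comparison lemma, which says that over a $\Q$-algebra every such cocycle is a coboundary $\delta\gamma$ --- that is where the binomial coefficients get inverted. Point (i) is not automatic in the non-commutative case, which is why \cite{Froh,H1} (and Serre) route the argument through the universal enveloping algebra: Poincar\'e--Birkhoff--Witt plus the Milnor--Moore/Cartier theorem identify the covariant bialgebra of $F$ with $U(L(F))$, from which both Part 1 and Part 2(b) (the abelian case, where $U(\mathfrak{g})$ is a symmetric algebra and the group law is additive) follow. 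So your architecture is the right one, but the sentence ``the $n$-th step is an equation of the form $n\cdot x=(\text{known})$'' replaces the technical heart of the theorem with an incorrect description of the recursion; as written, the induction does not visibly close.
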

\subsection{The Hopf algebras $\mathbf{Symm}$ and $\Aut({\OO})$}
\label{examples}
Let us discuss two Hopf algebras, which are central to several mathematical objects, cf.~e.g.~\cite{FG-B,H1,H2}. Their connections with Free Probability were discussed in~\cite{FMcK1,FMcK2,MN1,MN2}.

Let $\Lambda:\mathbf{cRing}\rightarrow \mathbf{Grp}$ be the functor:
\begin{equation}
\label{lambda_ring}
\Lambda(R) := (1 + zR[[z]])^{\times}=\{f(z)=1+a_1z+a_2z^2+\dots |~a_j\in R\}. 
\end{equation}
The multiplication of the power series induces the structure of an abelian group, with $f(z)\equiv 1$, as additive neutral element.
Any ring homomorphism $f:R\rightarrow R'$ induces a morphism $\Lambda(f)$ of Abelian groups as follows:
$$
\Lambda(f)(1+a_1t+a_2t^2+a_3t^3+\dots):=1+f(a_1)t+f(a_2)t^2+f(a_3)t^3+\dots.
$$

The group-valued functor~(\ref{lambda_ring}) is known~\cite{H1,H2} to be represented by the $\Z$-algebra 
\begin{equation}
\label{Symm}
\mathbf{Symm}:=\Z[h_{*}]:=\Z[h_1,h_2,h_3,\dots]~,
\end{equation}
i.e. the polynomial ring in infinitely many commuting variables $h_n$, $n\in\N^{\times}$, with integer coefficients. Indeed, any homomorphism of unital rings $\varphi:\mathbf{Symm}\rightarrow R$, is uniquely determined by the values $h_n\mapsto\varphi(h_n)$, which in turn determines a power series 
$$
\sum_{n=0}^{\infty}\varphi(h_n) z^n\in\Lambda(R)~,
$$
where we used the convention $h_0\equiv1$, and hence $\varphi(h_0)=1$.

For $k\in\mathbf{cRing}$, let 
$
\mathbf{Symm}_k:=\Z[h_{*}]\otimes_{\Z} k\cong k[h_1,h_2,h_3,\dots].
$

The co-ordinate Hopf algebra structure on $\mathbf{Symm}$ is obtained from the group structure of $\Lambda$ as follows. For an integer $n\geq1$, we have
\begin{itemize}
\item (unit) $e: 1 \mapsto  h_0\equiv1$,
\item (multiplication) $\mu: h_i\otimes h_j \mapsto  h_i\cdot h_j$,
\item (co-unit) $\varepsilon:  h_n \mapsto 0$,
\item (co-multiplication) $\Delta:  h_n \mapsto \sum_{j=0}^{n} h_j\otimes h_{n-j}$,
\item (co-inverse) $S : h_n  \mapsto  -\sum_{j=0}^{n-1}S(h_j)h_{n-j}$ with $S(h_0)=1$. 
\end{itemize}

If we assign to $h_i$ weight $i$, then $\mathbf{Symm}$ is a {\bf $\N$-graded} and {\bf connected} Hopf algebra, i.e. 
\begin{equation*}
\mathbf{Symm}=\bigoplus_{n=0}^{\infty}\mathbf{Symm}_n,
\end{equation*}
which is a direct sum of homogeneous abelian subgroups, satisfying $\mathbf{Symm}_0\cong \Z$, and with $\mathbf{Symm}_n$ being the span of the monomials in the $h_i$'s of total weight $n$.
It is {\bf connected} because the degree $0$ part  is of rank one.

The multiplication, co-multiplication and the antipode are compatible with the grading, i.e. we require
\begin{eqnarray*}
\mu(\mathbf{Symm}_m\otimes\mathbf{Symm}_n) & \subseteq & \mathbf{Symm}_{m+n}, \\
\Delta(\mathbf{Symm}_n) & \subseteq & \bigoplus^n_{j=0} \mathbf{Symm}_j\otimes\mathbf{Symm}_{n-j},\\
S(\mathbf{Symm}_n)&\subseteq& \mathbf{Symm}_n.
\end{eqnarray*}
As $\Lambda$  takes values in formal groups, it is the inverse limit of regular schemes, i.e.  
$$
\mathbf{Symm}=\mathop{\lim_{\longleftarrow}}_{n\in\N}\Z[h_1,\dots,h_n].
$$
The pair 
$$
({\Z[h_*]}, 1+h_1t +h_2t^2+h_3t^3 +\dots)
$$
represents $\Lambda$, with the natural transformation 
given by:
$$
\Hom_{\mathbf{cRing}}(\Z[h^*],R)\ni f\mapsto\Lambda(f)(H(t))=1+f(h_1)t+f(t_2)t^2+f(h_3)t^3+\dots\in\Lambda(R),
$$
for $R\in\mathbf{cRing}$, and 
$
H(t):=1+h_1t+h_2t^2+h_3t^3+\dots\in\Lambda(\Z[h_*]).
$
The latter series is the generating function for the {\bf power-sum symmetric functions} (more precisely its ghost components).

The second example we shall present, is the {\bf Faà di Bruno Hopf algebra}, cf. e.g.~\cite{FG-B}, and also~\cite{FMcK2} and references therein. For $A\in\mathbf{cAlg}_k$, let $A[[z]]=:{\OO}_A$ be the complete topological $k$-algebra of formal power series in an indeterminate $z$. For a continuous $k$-algebra homomorphism $\varphi$, we have
$$
\varphi\left(\sum_{n=0}^{\infty}a_nz^n\right)=\sum_{n=0}^{\infty}a_n\varphi(z^n)=\sum_{n=0}^{\infty}a_n\varphi(z)^n~,
$$
and it is completely determined by its value on the generator $z$. The set of automorphisms of $A[[z]]$, which is an infinite dimensional Lie group, has to be of the form 
$$
\Aut({\OO}_A):=\{\varphi(z)=a_1z+a_2z^2+a_3z^3+\cdots~|a_1\in A^{\times}, a_n\in A, n\geq2\},
$$
with the subgroup
$$
\Aut_+({\OO}_A):=\{\varphi(z)=z+a_2z^2+a_3z^3+\cdots~\},
$$
consisting of automorphisms which have a unit tangent vector at the identity. The group operation is given by the non-commutative composition of power series, i.e. for power series $f(z)=z+a_2z^2+a_3z^3+\dots$ and $g(z)=z+b_2z^2+b_3z^3+\dots$, the composite is $(g\circ f)(z):=g(f(z))$. The induced Hopf algebra structure on the co-ordinate ring $k[X_1,X_2,X_3,\dots]$ is also called the {\bf Faà di Bruno Hopf algebra}, although it appeared earlier in complex cobordism, cf.~the remarks in~\cite{FMcK1}. It is a graded connected Hopf algebra and it defines a pro-unipotent affine group scheme.

Its co-inverse or antipode is given in terms of the co-ordinate functions $X_n(f):=a_n$, $n\in\N^{\times}$, by
\begin{equation}
\label{FdB_coniverse}
(S_{\operatorname{FdB}}X_n)(f):=X_n(f^{-1}),
\end{equation}
with $f^{-1}$ denoting the inverse with respect to composition. It is known, cf.~\cite{FG-B}, that it can be calculated by the {\bf Lagrange-Bührmann inversion formula}, or in closed form by the (generalised) {\bf Zimmermann forest formula}.

\section{Non-commutative $k$-probability spaces}
Here we generalise basic notions from free probability to arbitrary commutative rings. Classical references are~\cite{V,NS}.

Let $k$ be a commutative ring with unit and $\mathcal{A}$ an associative but not necessarily commutative unital $k$-algebra.

For two algebras ${A},{B}\in\mathbf{Alg}_k$, let $\Hom_k({A},{B})$ denote the set of $k$-linear maps and $\Hom_{k,1}(A,B)$ the subset of $k$-linear maps with the additional property that $1_A\mapsto 1_B$, which is called {\bf unital}. We note that $\Hom_{k,1}(A,B)$ is an {\em affine subspace} of $\Hom_{k}(A,B)$.

\begin{df}
A {\bf non-commutative $R$-valued $k$-probability space} consists of  a pair $(\mathcal{A},\phi)$, with $\mathcal{A}\in\mathbf{Alg}_k$ and a {\em fixed} $k$-linear functional $\phi$ with values in $R\in\mathbf{cAlg}_k$, such that  
$
\phi(1_{\mathcal{A}})=1_R.
$\\
The elements $a\in \mathcal{A}$ are called  {\bf non-commutative random variables}. 
\end{df}

\begin{rem} Equally, we shall speak about a {\bf non-commutative $R-k$-probability space}, and so the usual theory corresponds to a non-commutative $\C-\C$-probability space. 
\end{rem}

For a vector $\underline{a}=(a_1,\dots, a_s)\in\mathcal{A}^s$,  $s\in\N^{\times}$,  and a word $w=(i_1\dots i_n)\in[s]^*$, let
$\underline{a}_w:=a_{i_1}\cdots a_{i_n}$ and $\underline{a}_{\emptyset}:=1_{\mathcal{A}}$.
\begin{df}
The {\bf $s$-dimensional distribution} or {\bf law} of $\phi$, with values in $R$, is the map
$$
\mu:\mathcal{A}^s\rightarrow \Hom_{k,1}(k\langle z_1,\dots, z_s\rangle,R),
$$
which assigns to every $s$-tuple $\underline{a}:=(a_1,\dots, a_s)$ a $k$-linear functional such that 
\begin{eqnarray}
\label{s-distribution}
\mu_{(a_1,\dots, a_s)}&:&k\langle z_1,\dots, z_s\rangle\rightarrow R,\\\nonumber
& & z_w\mapsto \mu_{(a_1,\dots, a_s)}(\underline{a}_w):=\phi(\underline{a}_w)\qquad  \forall w\in[s]^*~.
\end{eqnarray}
$\mu_{(a_1,\dots, a_n)}$ is called the {\bf $R$-valued distribution} of the $s$-tuple $(a_1,\dots, a_s)$. 
\end{df}

\begin{df}
The  {\bf joint moments of order $|w|$ of $\underline{a}$}, are the values
\begin{equation}
\label{moment}
m_w(\underline{a}_w):=\phi(\underline{a}_w)=\mu_{(a_1,\dots, a_s)}(\underline{a}_w)\in R.
\end{equation}
The {\bf $\mathcal{M}$-transform} gives the {\bf joint moment series}
\begin{equation}
\label{moment-map}
\mathcal{M}:\mathcal{A}^s\rightarrow R_+\langle\langle z_1,\dots,z_s\rangle\rangle,
\end{equation}
\end{df}
defined as
\begin{equation}
\label{moment_series}
\mathcal{M}_{(a_1,\dots, a_s)}(z_1,\dots, z_s):=\sum_{\begin{subarray}{l} w\in[s]^*\\ |w|\geq 1 \end{subarray}} m_w(a_w) z_w.
\end{equation}

\begin{prop}
Let $(\mathcal{A},\phi)$ be a non-commutative $R-k$-probability space.  Any $\varphi\in\Hom_{\mathbf{cAlg}_k}(R,R')$ induces an $R'$-valued non-commutative $k$-probability space, with the unital linear functional $\phi'$ given as the push-forward by $\varphi$, as in
\[
\begin{xy}
  \xymatrix{
   \mathcal{A}\ar[r]^{\phi}\ar[rd]_{\phi'=\varphi_*\phi}    & R\ar[d]^{\varphi}\\
    & R'
               }
\end{xy}
\]
\end{prop}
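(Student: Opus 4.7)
The plan is to take the obvious candidate $\phi' := \varphi \circ \phi$, which is exactly what the commutative triangle in the statement prescribes, and verify that it satisfies the two defining properties of a non-commutative $R'$-valued $k$-probability space, namely $k$-linearity and unitality. Once that is done, the pair $(\mathcal{A}, \phi')$ is such a probability space by the very definition given earlier in the section.

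First I would check $k$-linearity: $\phi$ is $k$-linear by hypothesis, and $\varphi\in\Hom_{\mathbf{cAlg}_k}(R,R')$ is in particular a morphism of $k$-modules, so the composition $\varphi\circ\phi$ is $k$-linear. Next I would check unitality by the one-line computation
$$
\phi'(1_{\mathcal{A}}) = \varphi\bigl(\phi(1_{\mathcal{A}})\bigr) = \varphi(1_R) = 1_{R'},
$$
where the first equality uses that $\phi$ is unital (hypothesis on the probability space) and the second uses that morphisms in $\mathbf{cAlg}_k$ preserve the multiplicative identity. Note that the commutativity hypothesis on $R'$ enters only to ensure that $(\mathcal{A},\phi')$ fits into the definition of a non-commutative $R'$-valued $k$-probability space in the first place; it plays no role in the verification itself.

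I do not foresee any genuine obstacle. In conceptual terms, the content of the proposition is just the observation that the assignment $R\mapsto \Hom_{k,1}(\mathcal{A},R)$ is a covariant functor $\mathbf{cAlg}_k\to\mathbf{Set}$, with functoriality realised by push-forward $\varphi_*\phi := \varphi\circ\phi$; the displayed triangle in the statement is precisely the defining equation of $\phi'$ as this push-forward.
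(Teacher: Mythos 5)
Your proof is correct: the composition $\varphi\circ\phi$ is $k$-linear as a composite of $k$-linear maps, and unitality follows since $\varphi(1_R)=1_{R'}$ for any morphism in $\mathbf{cAlg}_k$. The paper itself offers no proof of this proposition, treating it as immediate, and your verification is exactly the routine check that is being suppressed.
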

\begin{exmpl}
\end{exmpl}
Let $(X,{\OO}_X)$ be a sheaf of $k$-algebras. For $\mathcal{A}\in\mathbf{Alg}_k$ define any unital $k$-linear functional 
$$
\phi:=\phi_X:\mathcal{A}\rightarrow{\OO}_X(X).
$$
This induces a family of non-commutative $({\OO}_X(U)-k)$-probability spaces, indexed by the open sets $U\subset X$, with
$
\phi_U:\mathcal{A}\rightarrow{\OO}(U)
$
given by 
$
\phi_U=\rho^X_U\circ\phi_X.
$
\section{Free $k$-cumulants}
References for this section are~\cite{Krew,N,NS,S97}. 
\subsection{Non-crossing partitions}
\label{noncross-sec}
\begin{df}
Let $S$ be a finite set. A {\bf partition} $\pi$ of $S$ is a collection $\pi:=\{V_1,\dots, V_r\}$ of non-empty, pairwise disjoint subsets $V_i\subset S$, $ i=1,\dots,r$, called the {\bf blocks} of $\pi$, with union $S$, i.e. $S=\coprod_{i=1}^r V_i$. 
\end{df}
The number of blocks in a given partition $\pi$ is denote by $|\pi|$. We shall write $\operatorname{P}(S)$ for the set of all partitions of $S$, and for $S=\{1,\dots,n\}$ we use $\operatorname{P}(n)$. 

On the set $[n]:=\{1,\dots,n\}$, we define an equivalence relation $\sim_{\pi}$ as follows:  for $1\leq p,q\leq n$, let
$$
p\sim_{\pi} q:\quad\Leftrightarrow\exists i: p,q\in V_i, \quad\text{i.e. $p,q$ are in the same block $V_i$ of $\pi$.}
$$
\begin{df}
A partition $\pi$ of $S=\{1,\dots,n\}$ is called {\bf crossing} if there exist $p_1<q_1<p_2<q_2$ in $S$ such that $p_1\sim_{\pi} p_2$, $q_1\sim_{\pi} q_2$ but $p_2\nsim_{\pi} q_1$. If $\pi$ is not crossing then is called {\bf non-crossing}. The subset of all non-crossing partitions of $[n]$ is denoted by $\operatorname{NC}(n)$.
\end{df}
The set $\operatorname{NC}(n)$ is partially ordered, i.e. a {\bf poset}, by the {\bf reversed refinement order}, which gives it also a lattice structure. For two partitions $\pi,\sigma\in\operatorname{NC}(n)$, we write $\sigma\leq\pi$, if every block of $\sigma$ is contained in some block of $\pi$. The {\bf maximal element} is given by the partition
$$
1_n=\{(1,\dots, n)\},
$$
and the {\bf minimal element} by
$$
0_n=\{(1),\dots,(n)\}.
$$
\begin{df}
Let $(S,\leq)$ be a partially ordered finite set and $p,q\in S$. The element
$$
p\vee q:=\min\{t\in S~|~\text{$p\leq t$ and $q\leq t$}\}
$$
is called the {\bf join} of $p$ and $q$, if it exists. 
\end{df}

The notion of non-crossing partition originated with {\bf Kreweras}~\cite{Krew}, and it can be generalised to any finite, {totally ordered} set. He also gave the following construction, which has no counterpart in the general case.

Introduce an additional ordered set $[\overline{n}]:=\{\overline{1},\dots,\overline{n}\}$, and denote by $\operatorname{NC}(\overline{n})$ all non-crossing partitions of $[\overline{n}]$. Equally, define the ordered set $[n,\overline{n}]:=\{1,\overline{1},\dots,n,\overline{n}\}$ and let $\operatorname{NC}(n,\overline{n})$ be the set of all non-crossing partitions of $[n,\overline{n}]$. 
\begin{df}
Let $\pi\in\operatorname{NC}(n)$ be a non-crossing partition. The {\bf Kreweras complement} $K(\pi)$  of $\pi$, is the maximal non-crossing partition $K(\pi)\in\operatorname{NC}(\overline{n})$ such that $\pi\cup K(\pi)\in\operatorname{NC}(n,\overline{n})$.
\end{df}
By identifying $\operatorname{NC}(\overline{n})$ with $\operatorname{NC}({n})$, the Kreweras complement $K(-)$ induces a bijection
$$
K:\operatorname{NC}({n})\rightarrow \operatorname{NC}({n}), 
$$
which is has no fixed points for $n\geq2$, i.e. $\pi\neq K(\pi)$ for all $\pi\in\operatorname{NC}(n)$.  For every integer $n\geq1$, it satisfies the  relation~\cite{Krew}: 
\begin{equation}
\label{part-Krew_rel}
|\pi| + |K(\pi)|=n+1\qquad\text{for all $\pi\in\operatorname{NC}(n)$}.
\end{equation}
\begin{rem}
\label{pi-K^2(pi)}
For $\pi\in\operatorname{NC}(n)$, the partition $K^2(\pi)$ has the same block structure as $\pi$ and it can be obtained from $\pi$ by a global cyclic permutation.
\end{rem}
\begin{exmpl}
\end{exmpl}
We list the non-crossing partitions for $n=1,\dots,4$, with the subscript denoting an enumeration,  and the corresponding Kreweras complement, indicated by an arrow, e.g. $4\rightarrow 9$, means that the Kreweras complement of the partition number 4 is the partition number 9. 
\begin{description}
\item[$n=1$]
\begin{eqnarray*}
(1)_1
 \end{eqnarray*}
 \begin{eqnarray*}
1\rightarrow 1
\end{eqnarray*}
\item[$n=2$]
\begin{eqnarray*}
(12)_1 \\
(1|2)_{2}
 \end{eqnarray*}
  \begin{eqnarray*}
1\rightarrow 2, 2\rightarrow 1
\end{eqnarray*}
\item[$n=3$]
$$
\begin{array}{ccc} & (123)_1 &  \\(1|23)_2 & (12|3)_3 &  (13|2)_4 \\ & (1|2|3|)_5 & \end{array}
$$
\begin{eqnarray*}
1\rightarrow 5, 2\rightarrow 4,  3\rightarrow 2, 4\rightarrow 3, 5\rightarrow 1
\end{eqnarray*}
\item[$n=4$]
$$
\begin{array}{cccccc} &  & (1234)_1 &  &  &  \\(1|234)_2 & (134|2)_3 & (124|3)_4 & (123|4)_5 & (12|34)_6 & (14|23)_7 \\(1|2|34)_8 & (1|23|4)_9 & (12|3|4)_{10} & (1|24|3)_{11} & (14|2|3)_{12} & (13|2|4)_{13} \\ &  & (1|2|3|4)_{14} &  &  & \end{array}
$$
\begin{eqnarray*}
1\rightarrow 14, 2\rightarrow 12,  3\rightarrow 10, 4\rightarrow 9, 5\rightarrow 8, 6\rightarrow 11, 7\rightarrow 13 \\
8\rightarrow 4, 9\rightarrow 3,  10\rightarrow 2, 11\rightarrow 7, 12\rightarrow 5, 13\rightarrow 6, 14\rightarrow 1
\end{eqnarray*}
\end{description}

\subsection{Free cumulants and combinatorial freeness}
Let $(\mathcal{A},\phi)$ be a non-commutative $R-k$-probability space. 
\begin{df}
The $R$-valued {\bf free cumulants} $\kappa_n$, are a family of $k$-multilinear functionals $\kappa_n:\mathcal{A}^n\rightarrow R$, $n\in\N$, which are {\em recursively} defined by the {\bf moment-cumulant formula}:
\begin{equation}
\label{free-cumulants}
\kappa_1(a_1):=\phi(a_1),\qquad{and}\qquad \phi(a_1\cdots a_n)=\sum_{\pi\in\operatorname{NC}(n)}\kappa_{\pi}[a_1,\dots,a_n],\quad
n\geq2
\end{equation}
where for $\pi=\{V_1,\dots, V_r\}\in\operatorname{NC}(n)$, we define
\begin{equation}
\label{free-cumulants2}
\kappa_{\pi}[a_1,\dots, a_n]:=\prod_{j=1}^r \kappa_{V_j}[a_1,\dots, a_n],
\end{equation}
using the notation
$$
k_V[a_1,\dots, a_n]:=k_{m}(a_{v_1},\dots,a_{v_m})
$$
for the restriction to a block $V=(v_1,\dots,v_m)\subset[n]$, with $m=|V|\leq n$.

\end{df}

If we define, by abuse of notation, $k_n(a):=k_n(a,\dots,a)$, then the resulting series $(k_n(a))_{n\in\N}$ is called the {\bf $R$-valued free cumulants series of $a$}, or just the {\bf $R$-free cumulants} of $a$. This can be generalised to several variables. 
\begin{df}
Let $\underline{a}=(a_1,\dots,a_s)\in\mathcal{A}^s$, $s\geq 1$. The {\bf $\mathcal{R}$-transform} with values in $R\in\mathbf{cAlg}_k$, is the map 
$
\mathcal{R}:\mathcal{A}^s\rightarrow R_+\langle\langle z_1,\dots, z_s\rangle\rangle
$
which assigns to every $\underline{a}$ the formal power series $\mathcal{R}(\underline{a})$ in $s$ non-commuting variables $\{z_1,\dots, z_s\}$, given by  
\begin{eqnarray}
\label{R-trafo}
\mathcal{R}(\underline{a}):=\mathcal{R}_{a_1,\dots,a_s}(z_1,\dots,z_s)&:=&\sum^{\infty}_{n=1}\sum^s_{i_1,\dots, i_n=1}\kappa_n(a_{i_1},\dots,a_{i_n})z_{i_1}\cdots z_{i_n}\\
\nonumber
&=&\sum_{|w|\geq1}\kappa_{|w|}(\underline{a}_w) z_w.
\end{eqnarray} 
\end{df}

It is important to note that formula~(\ref{free-cumulants}) leads to {\bf polynomial expressions with integer coefficients} when explicitly written out, cf.~\cite{NS}, p.~176. 
\begin{exmpl}
\end{exmpl}
Let us list the relations between the free cumulants and the moments for $n=1,2,3$:

\begin{description}
\item[$n=1$]: $\kappa_1(a_1)=\phi(a_1)$,
\item[$n=2$]: $\kappa_1(a_1,a_2)=\phi(a_1a_2)-\phi(a_1)\phi(a_2)$,
\item[$n=3$]:
$\kappa_1(a_1,a_2,a_3)=\phi(a_1a_2a_3)-\phi(a_1)\phi(a_2a_3)-\phi(a_1a_2)\phi(a_3)-\phi(a_1a_3)\phi(a_2)+2\phi(a_1)\phi(a_2)\phi(a_3)$.
\end{description}
As with the complex valued case~\cite{NS}, one proves the following
\begin{prop} Let $(\mathcal{A},\phi)$ be a non-commutative $R-k$-probability space. For $n\geq2$, and arbitrary $a_1,\dots, a_n\in\mathcal{A}$, the following holds: $\kappa_n(a_1,\dots, a_n)=0$ if there exists $1\leq i\leq n$ such that $a_i\in k 1_{\mathcal{A}}$.
\end{prop}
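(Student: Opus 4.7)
The plan is to reduce to $a_i=1_{\mathcal A}$ using $k$-multilinearity of $\kappa_n$, and then to proceed by induction on $n$, using the defining moment-cumulant formula~(\ref{free-cumulants}) together with an elementary bijection on non-crossing partitions.

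For the base case $n=2$, I compute directly: the moment-cumulant formula gives $\phi(a_1 \cdot 1_{\mathcal A}) = \kappa_2(a_1,1_{\mathcal A}) + \kappa_1(a_1)\kappa_1(1_{\mathcal A})$. Since $\kappa_1(1_{\mathcal A})=\phi(1_{\mathcal A})=1_R$, this forces $\kappa_2(a_1,1_{\mathcal A}) = \phi(a_1)-\phi(a_1)=0$, and symmetrically for the second slot.

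For the inductive step, assume the statement for all orders $m$ with $2\leq m<n$. I equate
\begin{equation*}
\phi(a_1\cdots a_{i-1}\cdot 1_{\mathcal A}\cdot a_{i+1}\cdots a_n) \;=\; \phi(a_1\cdots a_{i-1}a_{i+1}\cdots a_n)
\end{equation*}
and expand both sides by~(\ref{free-cumulants}). The left-hand side is a sum over $\pi\in\operatorname{NC}(n)$, which I split according to the block $B\ni i$. If $|B|\geq 2$ and $\pi\neq 1_n$, then $|B|<n$; hence by the inductive hypothesis the factor $\kappa_B[\ldots]$ contains a $1_{\mathcal A}$ in one of its $|B|\geq 2$ arguments and vanishes, killing $\kappa_\pi$. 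The partition $\pi=1_n$ contributes exactly the target term $\kappa_n(a_1,\ldots,1_{\mathcal A},\ldots,a_n)$. The remaining partitions are those in which $\{i\}$ is a singleton block of $\pi$.

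The key combinatorial step is the bijection $\{\pi\in\operatorname{NC}(n):\{i\}\in\pi\}\leftrightarrow \operatorname{NC}(n-1)$: removing the singleton $\{i\}$ and relabelling $[n]\setminus\{i\}$ order-isomorphically with $[n-1]$ preserves non-crossingness (inserting a singleton never creates a crossing, and removing one never repairs one). Under this bijection one has $\kappa_\pi[\ldots,1_{\mathcal A},\ldots] = \kappa_1(1_{\mathcal A})\cdot\kappa_\sigma[a_1,\ldots,\widehat{a_i},\ldots,a_n] = \kappa_\sigma[a_1,\ldots,\widehat{a_i},\ldots,a_n]$, since $\kappa_1(1_{\mathcal A})=1_R$. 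Summing over $\sigma\in\operatorname{NC}(n-1)$ and applying~(\ref{free-cumulants}) in order $n-1$ reproduces exactly $\phi(a_1\cdots\widehat{a_i}\cdots a_n)$. Cancelling this with the right-hand side of the equated moments leaves $\kappa_n(a_1,\ldots,1_{\mathcal A},\ldots,a_n)=0$.

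No serious obstacle is expected: the argument is the verbatim analogue of the complex-valued case, since the moment-cumulant formula has integer coefficients and the combinatorial identities are purely set-theoretic. The only points requiring a brief check are the preservation of non-crossingness under insertion/deletion of a singleton, and that $k$-multilinearity of $\kappa_n$ (inherited from $k$-linearity of $\phi$ via the recursion) legitimately reduces the general scalar case $a_i=\lambda 1_{\mathcal A}$, $\lambda\in k$, to $a_i=1_{\mathcal A}$.
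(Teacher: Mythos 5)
Your proof is correct and is exactly the standard induction from the complex-valued case (Nica--Speicher, Prop.~11.15) that the paper invokes by reference rather than writing out: the reduction to $a_i=1_{\mathcal A}$ by $k$-multilinearity, the split of $\operatorname{NC}(n)$ according to the block containing $i$, and the singleton-deletion bijection with $\operatorname{NC}(n-1)$ all carry over verbatim to the $R$-$k$ setting, since the argument uses only $\phi(1_{\mathcal A})=1_R$ and integer-coefficient identities, with no division.
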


Basically, the free cumulants measure how much the fixed $k$-linear functional $\phi$ fails to be a $k$-algebra morphism, as was observed in~\cite{NS}.
\begin{df}
Let $(\mathcal{A},\phi)$ be an non-commutative $R-k$-probability space and $M_1,\dots, M_n\subset\mathcal{A}$. The subsets $M_1,\dots, M_n$ are said to have {\bf vanishing mixed cumulants} if 
$$
\kappa_m(a_1,\dots, a_m)=0,
$$
whenever $a_1\in M_{i_1},\dots,a_m\in M_{i_m}$ and there exist indices $1\leq s<t\leq m$, such that $i_s\neq i_t$.
\end{df}
Now we give a definition of freeness which is required in a general setting.  
\begin{df}
Let $(\mathcal{A},\phi)$ be a non-commutative $R-k$-probability space. The subsets $M_1,\dots, M_n\subset\mathcal{A}$ are called {\bf combinatorially free} if they have vanishing mixed cumulants.
\end{df}
An adapted version of Voiculescu's original definition of freeness is 
\begin{df}
\label{freeness}
Let $(\mathcal{A},\phi)$ be a non-commutative $k$-probability space with values in $k$. Further, let $I$ be an arbitrary non-empty index set and $(\mathcal{A}_i)_{i\in I}$ a collection of unital subalgebras, i.e. $1_\mathcal{A}\in \mathcal{A}_i\subset \mathcal{A}$, $i\in I$. 

The family $(\mathcal{A}_i)_{i\in I}$ is {\bf freely independent} with respect to $\phi$ if 
$$
\phi(a_1\cdots a_n)=0,
$$
given that
\begin{enumerate}
\item $a_j\in \mathcal{A}_{i_j}$ for $1\leq j\leq n$,
\item $a_j\in\ker(\phi)$ for $1\leq j\leq n$, and
\item $i_j\neq i_{j+1}$ for $1\leq j< n$, i.e. {\em consecutive} factors $a_j$ and $a_{j+1}$ are from different subalgebras.
\end{enumerate}
A collection of subsets $X_i\subset A$, or elements $a_i\in A$, $i\in I$, is called {\bf free} if the family of subalgebras $A_i$ generated by $\{1_A\}\cup X_i$, respectively $\{1_A, a_i\}$, is free.
\end{df}
Sometimes we shall refer to the above definition as {\em classical} freeness. 
\begin{lem}
Let $a\in\mathcal{A}$ be any element in a non-commutative $k-k$-probability space $(\mathcal{A},\phi)$. The pair consisting of $(1_{\mathcal{A}},a)$ or $(0_{\mathcal{A}},a)$ is free. 
\end{lem}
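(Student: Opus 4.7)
The plan is to apply Definition~\ref{freeness} directly. In both cases I need to identify the two unital subalgebras being tested for free independence. For the pair $(1_{\mathcal{A}}, a)$, the subalgebra $\mathcal{A}_1$ generated by $\{1_{\mathcal{A}}, 1_{\mathcal{A}}\}$ is simply $k\cdot 1_{\mathcal{A}}$; similarly for the pair $(0_{\mathcal{A}}, a)$, the subalgebra generated by $\{1_{\mathcal{A}}, 0_{\mathcal{A}}\}$ collapses to $k\cdot 1_{\mathcal{A}}$ since adjoining $0_{\mathcal{A}}$ produces nothing new. In both cases the second subalgebra $\mathcal{A}_2$ is the unital $k$-subalgebra of $\mathcal{A}$ generated by $a$.

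The key observation is the identity
\[
\mathcal{A}_1\cap\ker(\phi)=\{0\}.
\]
Indeed, a generic element of $\mathcal{A}_1$ is of the form $\lambda\cdot 1_{\mathcal{A}}$ for some $\lambda\in k$, and the unitality condition $\phi(1_{\mathcal{A}})=1_k$ (recall the co-domain here is $k$ itself) forces $\phi(\lambda\cdot 1_{\mathcal{A}})=\lambda\cdot 1_k=\lambda$; hence membership in $\ker(\phi)$ is equivalent to $\lambda=0$.

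With this in hand, I verify the alternating product condition of Definition~\ref{freeness}. Take any $n\geq 1$ and any sequence $a_1,\dots,a_n$ with $a_j\in\mathcal{A}_{i_j}\cap\ker(\phi)$ and $i_j\neq i_{j+1}$. Two cases arise: if some index $i_j=1$, then $a_j=0$ by the observation above, so $a_1\cdots a_n=0$ and hence $\phi(a_1\cdots a_n)=0$; if instead all indices equal $2$, the alternation requirement $i_j\neq i_{j+1}$ is only compatible with $n=1$, in which case $\phi(a_1)=0$ holds by hypothesis. Either way, the defining vanishing condition is satisfied, so $(\mathcal{A}_1,\mathcal{A}_2)$ is free, and therefore the pair is free in the sense of Definition~\ref{freeness}.

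No step is a genuine obstacle here; the lemma is essentially a sanity check that the unit $1_{\mathcal{A}}$ and the zero $0_{\mathcal{A}}$ behave as neutral elements with respect to freeness. The only point requiring minor care is that the statement is cast for a $k\!-\!k$-probability space (so the co-domain $R$ equals $k$), which is exactly what makes the computation $\phi(\lambda\cdot 1_{\mathcal{A}})=\lambda$ meaningful and forces the intersection $\mathcal{A}_1\cap\ker(\phi)$ to be trivial.
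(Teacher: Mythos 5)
Your proof is correct and follows essentially the same route as the paper: both arguments reduce to the observation that the unital subalgebra generated by $1_{\mathcal{A}}$ (or by $\{1_{\mathcal{A}},0_{\mathcal{A}}\}$, which is the same algebra) is $k\cdot 1_{\mathcal{A}}$, whose intersection with $\ker(\phi)$ is $\{0_{\mathcal{A}}\}$, so every alternating product in Definition~\ref{freeness} either contains a zero factor or reduces to the trivial $n=1$ case. Your write-up is in fact slightly more careful than the paper's in explicitly disposing of the all-indices-equal-to-$2$ case.
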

\begin{proof}
The $k$-algebra generated by $1_{\mathcal{A}}$ is $k\cdot 1_{\mathcal{A}}$ and the only element $x$ it contains such that $\phi(x)=0$, is $0_{\mathcal{A}}$. But  any product of elements in $\mathcal{A}$ containing $0_{\mathcal{A}}$ is zero, and therefore in the kernel of $\phi$.

The second case follows from the definition of freeness, as we consider the unital subalgebra generated by an element, and therefore $\{1_{\mathcal{A}}\}$ and $\{1_{\mathcal{A}}, 0_{\mathcal{A}}\}$ generate the same algebra, and so we are in the situation of the first one.
\end{proof}

The following is based on the polynomial nature of the moment-cumulant formula.
 \begin{prop}
Let $(\mathcal{A},\phi)$ be a non-commutative $R-k$-probability space and $M_1,\dots, M_n\subset\mathcal{A}$ combinatorially free subsets. For any $\varphi\in\Hom_{\mathbf{cAlg}_k}(R,R')$ the above subsets are also combinatorially free in the non-commutative $R'-k$ probability space $(\mathcal{A},\phi')$ with
$$
\phi':=\varphi_*\phi=\varphi\circ \phi\quad\text{and}\quad\kappa'_n=\varphi_*\kappa_n=\varphi\circ\kappa_n.
$$
\end{prop}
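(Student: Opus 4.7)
The plan is to reduce the whole statement to the single functorial identity $\kappa'_n=\varphi\circ\kappa_n$ for all $n\geq 1$ and all choices of arguments. Once this is established, combinatorial freeness of $M_1,\dots,M_n$ with respect to $\phi'$ is automatic: given $a_j\in M_{i_j}$ with a non-constant sequence of indices, one has $\kappa_m(a_1,\dots,a_m)=0_R$ by hypothesis, hence $\kappa'_m(a_1,\dots,a_m)=\varphi(0_R)=0_{R'}$, which is exactly the vanishing-mixed-cumulants condition for $\phi'$.

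To prove $\kappa'_n=\varphi\circ\kappa_n$ I would argue by induction on $n$, using the fact emphasized after~(\ref{free-cumulants}) that the moment-cumulant formula can be solved recursively for $\kappa_n(a_1,\dots,a_n)$ in terms of $\phi(a_1\cdots a_n)$ and cumulants of order strictly less than $n$, because the maximal partition $1_n=\{(1,\dots,n)\}$ is the unique partition of $[n]$ whose single block contributes exactly $\kappa_n(a_1,\dots,a_n)$ to the sum. The base case is immediate: $\kappa'_1(a)=\phi'(a)=\varphi(\phi(a))=\varphi(\kappa_1(a))$. For the inductive step I would apply $\varphi$ to~(\ref{free-cumulants}) for $\phi$:
\[
\phi'(a_1\cdots a_n)=\varphi\bigl(\phi(a_1\cdots a_n)\bigr)=\sum_{\pi\in\operatorname{NC}(n)}\prod_{V\in\pi}\varphi\bigl(\kappa_{V}[a_1,\dots,a_n]\bigr),
\]
using that $\varphi\in\Hom_{\mathbf{cAlg}_k}(R,R')$ distributes over the finite products appearing in $\kappa_\pi=\prod_V\kappa_V$. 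For every $\pi\neq 1_n$ each block has size $<n$, so the inductive hypothesis replaces $\varphi\circ\kappa_{|V|}$ by $\kappa'_{|V|}$; the resulting sum over $\pi\neq 1_n$ coincides with the corresponding partial sum in the moment-cumulant formula for $\phi'$. Subtracting this from the identity $\phi'(a_1\cdots a_n)=\sum_\pi\kappa'_\pi[a_1,\dots,a_n]$ isolates $\kappa'_n(a_1,\dots,a_n)=\varphi(\kappa_n(a_1,\dots,a_n))$, closing the induction.

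I do not anticipate any genuine obstacle, as the argument is entirely formal. The one point that deserves care is the interaction of $\varphi$ with the product $\kappa_\pi=\prod_V\kappa_V$, but this is exactly the unital $k$-algebra morphism property of $\varphi$ applied blockwise; conceptually, the whole statement expresses the fact that the moment-cumulant inversion is given by universal polynomials with integer coefficients, so it is preserved under any change of the commutative coefficient ring. The combinatorial freeness part of the conclusion is then a one-line consequence of the equality $\kappa'_n=\varphi\circ\kappa_n$ together with $\varphi(0_R)=0_{R'}$.
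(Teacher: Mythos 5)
Your proof is correct and is essentially the paper's argument spelled out in full: the paper disposes of this proposition with the single line that it ``follows from the definitions and the properties of a ring homomorphism,'' and your induction on $n$ via the recursive moment--cumulant formula, together with the observations that $\varphi$ respects the blockwise products and sends $0_R$ to $0_{R'}$, is precisely the content of that remark. No gaps.
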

\begin{proof}
This follows from the definitions and the properties of a ring homomorphism.
\end{proof}

The relation between the two notions of freeness is clarified next. In a more general setting they are not identical as we cannot a priori centre the random variables, as was first observed in~\cite{FN,F}.
\begin{prop} We have 
\begin{enumerate}
\item Let $\mathcal{A}_1,\dots, \mathcal{A}_n\subset\mathcal{A}$ be {\em combinatorially free} unital subalgebras of an $R$-valued non-commutative $k$-probability space $(\mathcal{A},\phi)$. Then they are also {\em classically free}, i.e. for all $m\geq 1$
$$
\phi(a_{1}\cdots a_{m})=0,
$$
whenever $a_{1}\in\mathcal{A}_{i_1},\dots, a_{m}\in\mathcal{A}_{i_m}$ with $i_1\neq i_2,\dots ,i_{m-1}\neq i_m$ and $\phi(a_i)=0$.
\item For any $k$-valued non-commutative $k$-probability space, i.e. $\phi:\mathcal{A}\rightarrow k$, classical and combinatorial freeness are equivalent.
\end{enumerate}
\end{prop}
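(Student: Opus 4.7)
The plan is to use the moment--cumulant formula~(\ref{free-cumulants}) in both directions.

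For part~(1), fix $m\geq 1$ and elements $a_j\in\mathcal{A}_{i_j}$ with $i_j\neq i_{j+1}$ and $\phi(a_j)=0$. Expanding
$$
\phi(a_1\cdots a_m)=\sum_{\pi\in\operatorname{NC}(m)}\kappa_{\pi}[a_1,\dots,a_m],
$$
I would argue that every term on the right vanishes. Any $\pi$ having a block whose elements come from distinct subalgebras contributes $0$ by combinatorial freeness; any $\pi$ having a singleton block $\{j\}$ contributes $0$ because $\kappa_1(a_j)=\phi(a_j)=0$. The surviving partitions therefore have all blocks monochromatic (contained in a single $\mathcal{A}_{i_j}$) and of size $\geq 2$. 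The crucial combinatorial lemma is that every $\pi\in\operatorname{NC}(m)$ all of whose blocks have size $\geq 2$ must contain a block with two consecutive integers $j,j+1$; this is a straightforward strong induction on $m$, inspecting $p:=\min(V\setminus\{1\})$ for $V$ the block containing $1$ and observing that $p=3$ would force $\{2\}$ to be an illegal singleton block. But for any such pair, monochromaticity of the block would require $i_j=i_{j+1}$, contradicting the hypothesis. Hence no partition contributes and $\phi(a_1\cdots a_m)=0$.

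For part~(2), the implication combinatorial $\Rightarrow$ classical is immediate from part~(1). The converse exploits the defining feature of the $k$-valued case: since $\phi(a)\in k$, the element $\phi(a)\cdot 1_{\mathcal{A}}$ lies in every unital subalgebra $\mathcal{A}_i$, so each $a\in\mathcal{A}_i$ admits the \emph{centering} $a=\phi(a)\cdot 1_{\mathcal{A}}+\bigl(a-\phi(a)\cdot 1_{\mathcal{A}}\bigr)$ with the second summand in $\mathcal{A}_i\cap\ker\phi$. By multilinearity of $\kappa_n$ combined with the earlier Proposition stating that $\kappa_n$ annihilates any tuple with an argument in $k\cdot 1_{\mathcal{A}}$ for $n\geq 2$, it suffices to verify vanishing of mixed cumulants on centered inputs. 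I would then proceed by strong induction on $n$: assuming mixed cumulants of order $<n$ vanish, invert the moment--cumulant formula by M\"{o}bius inversion on the lattice $\operatorname{NC}(n)$ to express $\kappa_n(a_1,\dots,a_n)$ as an integer-coefficient polynomial in the joint moments of the $a_j$, and argue that classical freeness together with the inductive vanishing forces this polynomial to evaluate to zero whenever the index sequence is non-constant.

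The main obstacle is the induction step of the converse direction: classical freeness directly provides $\phi(a_1\cdots a_n)=0$ only for strictly alternating centered factors, whereas $\kappa_n$ must be shown to vanish on arbitrary mixed-index tuples in which letters of the same index may appear consecutively. The standard remedy, following Speicher and Nica, is to merge runs of consecutive equal-index letters into single elements of the ambient subalgebra --- legitimate because each $\mathcal{A}_i$ is itself an algebra --- recenter the resulting product, and reduce the ensuing partition sums via the induction hypothesis. One must verify that this reduction is valid over a general commutative ring rather than only a field; since all moment--cumulant coefficients are integers and the centering step is purely $k$-linear, the transfer from the classical setting is routine.
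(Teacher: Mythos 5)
Your proposal is correct and follows essentially the same route as the paper: for part~(1) both arguments expand $\phi(a_1\cdots a_m)$ via the moment--cumulant formula and kill every $\pi\in\operatorname{NC}(m)$ either through a singleton block (using $\kappa_1(a_j)=\phi(a_j)=0$) or through the fact that a non-crossing partition without singletons must contain a block with two consecutive elements, which is then mixed and vanishes by combinatorial freeness. For part~(2) the paper likewise reduces to centered variables via $a-\phi(a)1_{\mathcal{A}}$ (possible precisely because $\phi(a)\in k$) and then invokes the Nica--Speicher argument, which is the induction you sketch.
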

\begin{proof} In order to show 1., we can apply the argument from~\cite{NS}, as every unital ring is a $\Z$-module, as $\Z1_R$ is in the centre of $R$. 

Let $a_j\in\mathcal{A}_{i_j}$, for $j=1,\dots,m$, with subsequent indices being different, i.e. $i_1\neq i_2,\dots, i_{m-1}\neq i_m$ and $\phi(a_j)=0$ for every $j=1,\dots, m$.

In order to have freeness, we need to show that $\phi(a_1\cdots a_m)=0$. By formula~(\ref{free-cumulants}) we have $\phi(a_1\dots a_m)=\sum_{\pi\in\operatorname{NC}(m)}\kappa_{\pi}[a_1,\dots, a_m]$  but then every summand vanishes as every product  $\kappa_{\pi}[a_1,\dots, a_m]=\prod_{V\in\pi}\kappa_V[a_1,\dots, v_m]$ contains a factor equal to $0$. 

Namely, if a partition $\pi=(V_1,\dots, V_r)$ contains a block $V_i=(v_l)$ for a $i= 1,\dots, r$ and an $l= 1,\dots, m$, i.e. which consists of a single element, then $\kappa_1(a_l)=\phi(a_l)=0$ by assumption and the respective product~(\ref{free-cumulants2}) is zero.

If it contains no block consisting of a single element then there must be at least one block having a minimum of two consecutive elements, i.e. of the form $V_i=(\dots v_l,v_{l+1},\dots)$. But then by assuming freeness, $\mathcal{A}_{v_l}\neq \mathcal{A}_{v_{l+1}}$ and therefore the mixed cumulants have to vanish, again by assumption on combinatorial freeness.

To show 2., we can use the ``centering  trick", i.e. consider random variables of the form
$$
a-\underbrace{\phi(a)}_{\in k}1_{\mathcal{A}}
$$
which satisfy $\phi(a-\phi(a)1_{\mathcal{A}})=\phi(a)-\phi(a)1_k=0$ as $\phi$ is $k$-linear and unital. 
Therefore the proof in~[\cite{NS} Thm.~11.16, p.183] carries over.
\end{proof}

In summary, we have:
\begin{itemize}
\item For $\phi:\mathcal{A}\rightarrow k$: combinatorially free~$\Leftrightarrow$~free, as we can centre the random variables.
\item For $\phi:\mathcal{A}\rightarrow R$: combinatorial freeness~$\Rightarrow$~free, but not the other way round. 
\end{itemize}
The following is useful for generalisations.  
\begin{df}
Let $(\mathcal{A},\phi)$ be a non-commutative $R-k$-probability space and $M_1,\dots, M_n\subset\mathcal{A}$ (combinatorially) free subsets. Given $R'\in\mathbf{cAlg}_k$ and $\varphi\in\Hom_{\mathbf{cAlg}_k}(R',R)$, a {\bf (combinatorially) free lift} or {\bf lifting} of $\phi$, is a unital $k$-linear map $\phi':{\mathcal{A}}\rightarrow R'$ such that $M_1,\dots, M_n\subset\mathcal{A}$ remain combinatorially free with respect to $\phi$ and $\phi'\circ\varphi=\phi$, i.e. such that
\[
\begin{xy}
  \xymatrix{
    &{R'}\ar[d]^{\varphi}\\           \mathcal{A}\ar[r]^{\phi}\ar[ur]^{\phi'} & R
               }
\end{xy}
\]
commutes.
\end{df}

\section{Addition and multiplication of combinatorially free random variables}
In this section we shall not only give a generalisation of the theory established in~\cite{N,NS,S97} but also extend it further, including the moment picture. Also, where suitable, we shall adapt some of the original proofs of~\cite{NS} to the present situation.   

The $n$-fold direct product $\mathcal{A}^n=
\prod_{i=1}^n \mathcal{A}
$ of a $k$-algebra $\mathcal{A}\in\mathbf{Alg}_k$ with itself is, with component-wise operations, an associative but not necessarily commutative $k$-algebra. For $\underline{a}=(a_1,\dots,a_n), \underline{b}=(b_1,\dots,b_n)\in \mathcal{A}^n$, and $\lambda\in k$, we have
\begin{eqnarray*}
\underline{a}+\underline{b} & := & (a_1+b_1,\dots,a_n+b_n), \\
\underline{a}\star\underline{b} & :=& (a_1\cdot b_1,\dots,a_n\cdot b_n),\\
\lambda\cdot\underline{a} & :=& (\lambda\cdot a_1,\dots,\lambda\cdot a_n).
\end{eqnarray*}
On the other hand, the set $k\langle\langle z_1,\dots, z_n\rangle\rangle$ is a $k$-module, and besides the usual non-commutative multiplicative structure~(\ref{noncom_prod}), one might also endow it with a commutative monoid structure given by term-wise multiplication, also called {\bf Hadamard} multiplication. 

In general, the moment map~(\ref{moment-map}) does not preserve any of the above algebraic structures, e.g. in the additive case we would have
$$
\mathcal{M}(\underline{a}+\underline{b})\neq\mathcal{M}(\underline{a})+\mathcal{M}(\underline{b}),
$$
and in the multiplicative case
$$
\mathcal{M}(\underline{a}\star\underline{b})\neq\mathcal{M}(\underline{a})\cdot\mathcal{M}(\underline{b}).
$$
But, for a non-commutative $R-k$-probability space $(\mathcal{A},\phi)$ the following holds. 
\begin{prop}
Let $M_1,M_2\subset\mathcal{A}$ be free subsets. For every $n\geq1$, $n$-tuples $\underline{a}=(a_1,\dots,a_n)\in M^n_1$ and $\underline{b}=(b_1,\dots,b_n)\in M^n_2$, we have
\begin{equation}
\kappa_n(\underline{a}+\underline{b}) =  \kappa_n(\underline{a})+_R\kappa_n(\underline{b}).
\end{equation}
\end{prop}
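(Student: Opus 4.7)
The plan is to exploit the $k$-multilinearity of the free cumulant functional $\kappa_n$ together with the vanishing of mixed cumulants guaranteed by combinatorial freeness (which is what is meant by ``free'' here, as this is the hypothesis that directly controls cumulants).

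First I would expand $\kappa_n(a_1+b_1,\dots,a_n+b_n)$ by multilinearity in each slot. This produces a sum of $2^n$ terms indexed by the subsets $S\subseteq[n]$, of the form $\kappa_n(c_1^{S},\dots,c_n^{S})$ where $c_j^{S}=a_j$ if $j\in S$ and $c_j^{S}=b_j$ otherwise. Explicitly,
\begin{equation*}
\kappa_n(\underline{a}+\underline{b})=\sum_{S\subseteq[n]}\kappa_n(c_1^{S},\dots,c_n^{S}).
\end{equation*}

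Next I would separate the two ``pure'' terms $S=[n]$ and $S=\emptyset$, which give $\kappa_n(\underline{a})$ and $\kappa_n(\underline{b})$ respectively, from the remaining ``mixed'' terms. For every mixed $S$, the $n$-tuple $(c_1^{S},\dots,c_n^{S})$ contains at least one entry from $M_1$ and at least one entry from $M_2$, so there exist indices $1\leq s<t\leq n$ with $c_s^{S}\in M_{i_s}$ and $c_t^{S}\in M_{i_t}$ where $i_s\neq i_t$. By the definition of combinatorial freeness (vanishing mixed cumulants) these contributions are zero, leaving exactly
\begin{equation*}
\kappa_n(\underline{a}+\underline{b})=\kappa_n(\underline{a})+_R\kappa_n(\underline{b}).
\end{equation*}

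I do not expect a real obstacle here: the $k$-multilinearity of $\kappa_n$ is part of its definition, and the vanishing of mixed cumulants is precisely the definition of combinatorial freeness of $M_1,M_2$. The only minor point to be careful about is that the statement needs combinatorial rather than merely classical freeness, since over a general $R$ the two notions differ; but this matches the usage in the preceding propositions, where ``free'' for subsets has been identified with combinatorial freeness whenever cumulants are involved.
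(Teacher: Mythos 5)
Your proof is correct and follows essentially the same route as the paper: expand $\kappa_n(a_1+b_1,\dots,a_n+b_n)$ by multilinearity into $2^n$ terms (the paper indexes them by words over $\{a,b\}$ rather than subsets of $[n]$, which is the same thing), observe that all but the two pure terms are mixed cumulants and hence vanish by (combinatorial) freeness. Your closing remark about combinatorial versus classical freeness is a sensible clarification that the paper leaves implicit.
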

\begin{proof}
If we expand the expressions $\kappa_n(a_1+b_1,\dots, a_n+b_n)$ by using the multi-linearity, we get a sum of $2^n$ terms. These can be bijectively mapped onto a binary tree of height $n$ which represents words of length $n$ over the alphabet $\{a,b\}$. Exactly two of these words consist entirely of $a$'s or $b$'s and all the others contain both letters.  These correspond to mixed cumulants and have therefore to vanish according to our assumptions. So, the remaining sum is precisely $\kappa_n(a_1,\dots, a_n)+\kappa_n(b_1,\dots, b_n)$, which proves the claim.
 \end{proof}
\begin{prop}
\label{mult_free_conv}
Let $M_1,M_2\subset\mathcal{A}$ be {free} subsets. For every $n\geq1$, $n$-tuples $\underline{a}=(a_1,\dots,a_n)\in M^n_1$ and $\underline{b}=(b_1,\dots,b_n)\in M^n_2$, we have
\begin{equation}
\label{kumulant_mult}
\kappa_n(a_1b_1,\dots, a_n b_n)  =\sum_{\pi\in\operatorname{NC}(n)}  \kappa_{\pi}[\underline{a}]\cdot_R\kappa_{K(\pi)}[\underline{b}],
\end{equation}
where $K(\pi)$ denotes the Kreweras complement of $\pi$.
\end{prop}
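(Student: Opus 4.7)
The plan is to adapt the Nica--Speicher proof of the multiplicative free convolution formula (cf.~\cite[Ch.~14]{NS}) to the $R$-valued setting. The guiding observation is that the moment--cumulant relation~(\ref{free-cumulants}) and its M\"obius inversion on the lattice $\operatorname{NC}(n)$ involve only integer polynomials in the $\phi$-values, so every combinatorial identity derived from them survives verbatim when the scalar field $\C$ is replaced by an arbitrary commutative $k$-algebra $R$.

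The first and main ingredient, valid for \emph{any} elements (no freeness assumed), is the general product--cumulant formula
\begin{equation}
\label{cum_prod_formula}
\kappa_n(c_1d_1,\dots,c_nd_n)=\sum_{\substack{\pi\in\operatorname{NC}(2n)\\ \pi\vee\hat{0}_n=1_{2n}}}\kappa_\pi[c_1,d_1,c_2,d_2,\dots,c_n,d_n],
\end{equation}
where $\hat{0}_n:=\bigl\{\{1,2\},\{3,4\},\dots,\{2n-1,2n\}\bigr\}\in\operatorname{NC}(2n)$ pairs each factor $c_i$ with its partner $d_i$. I would derive this exactly as in~\cite{NS}: express $\kappa_n$ via M\"obius inversion of $\phi$ on $\operatorname{NC}(n)$, re-expand each moment $\phi(c_{i_1}d_{i_1}\cdots c_{i_k}d_{i_k})$ through~(\ref{free-cumulants}) as a sum over $\operatorname{NC}(2k)$, and carefully collect the resulting double sum. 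The argument is purely formal: it uses only associativity of $\mathcal{A}$, $k$-linearity of $\phi$, and the integrality of the M\"obius coefficients on $\operatorname{NC}$, so it works unchanged over $R$.

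The second step applies the hypothesis that $M_1$ and $M_2$ are (combinatorially) free, so that mixed cumulants vanish. Setting $c_i=a_i$, $d_i=b_i$ in~(\ref{cum_prod_formula}), only partitions $\pi$ whose blocks lie entirely in odd positions (the $\underline{a}$-slots) or entirely in even positions (the $\underline{b}$-slots) contribute. Each surviving $\pi$ decomposes uniquely as $\pi=\sigma\sqcup\tau$, with $\sigma$ a non-crossing partition of the odd positions (identified with $\operatorname{NC}(n)$ on $\underline{a}$) and $\tau$ a non-crossing partition of the even positions (identified with $\operatorname{NC}(\overline{n})$ on $\underline{b}$); by multiplicativity of $\kappa_\pi$ one then has $\kappa_\pi[a_1,b_1,\dots,a_n,b_n]=\kappa_\sigma[\underline{a}]\cdot_R\kappa_\tau[\underline{b}]$.

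The final, purely combinatorial step is the Kreweras characterisation: for a fixed $\sigma\in\operatorname{NC}(n)$, the set of $\tau\in\operatorname{NC}(\overline{n})$ for which both $\sigma\cup\tau\in\operatorname{NC}(n,\overline{n})$ \emph{and} the connectivity $\pi\vee\hat{0}_n=1_{2n}$ hold is exactly the singleton $\{K(\sigma)\}$. This follows from~(\ref{part-Krew_rel}) together with the maximality defining $K(\sigma)$, and it collapses the double sum over $(\sigma,\tau)$ to the single sum over $\sigma\in\operatorname{NC}(n)$ with $\tau=K(\sigma)$, yielding~(\ref{kumulant_mult}). The single step where the most care is required is the derivation of~(\ref{cum_prod_formula}) in the commutative-ring setting, since everything afterwards (vanishing of mixed cumulants, multiplicativity of $\kappa_\pi$, the Kreweras bijection) is either granted by hypothesis or is a purely lattice-theoretic fact independent of $R$.
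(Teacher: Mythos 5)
Your proposal is correct and follows essentially the same route as the paper: the paper invokes its Lemma~\ref{1112} (the products-as-arguments cumulant formula, cited from Nica--Speicher) with the interleaved word $a_1,b_1,\dots,a_n,b_n$ and the pair partition, then uses vanishing of mixed cumulants and the equivalence of the join condition with $\pi_b=K(\pi_a)$, exactly as you do. Your added remarks on why the M\"obius-inversion argument carries over to an arbitrary commutative $R$ are consistent with the paper's standing observation that the moment--cumulant relations are integer polynomials.
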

In order to prove the above, we need
\begin{lem}
\label{1112}
Let $(\mathcal A,\phi)$ be a non-commutative $k$-probability space and $a_1,\dots,a_n\in\mathcal{A}$. For $m\leq n$, let $1\leq j_1<j_2<\dots< j_m=n$, and consider the partition $\sigma\in\operatorname{NC}(n)$, defined as:
$$
\sigma:=\left\{\{1,\dots,j_1\},\{j_1+1,\dots,j_2\},\dots,\{j_{m-1}+1,\dots,j_m\}\right\}.
$$
Then the following product formula holds:
$$
\kappa_n(a_1\cdots a_{j_1}, a_{j_1+1}\cdots a_{j_2},\dots, a_{j_{m-1}+1}\cdots a_{j_m})=\sum_{\substack{\pi\in\operatorname{NC}(n)\\ \pi\vee\sigma=1_{n}}}\kappa_{\pi}[a_1,\dots, a_n].
$$
\end{lem}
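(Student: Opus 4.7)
Setting $j_0:=0$ and $b_i:=a_{j_{i-1}+1}\cdots a_{j_i}$ for $i=1,\dots,m$, let $\hat\tau\in\operatorname{NC}(n)$ denote the image of $\tau\in\operatorname{NC}(m)$ under the standard order-isomorphism $\operatorname{NC}(m)\xrightarrow{\sim}[\sigma,1_n]$ that replaces each index $i\in[m]$ by the block $\{j_{i-1}+1,\dots,j_i\}$ of $\sigma$. The plan is to prove the stronger identity
\[
(\star_\tau)\qquad \kappa_\tau[b_1,\dots,b_m] \;=\; \sum_{\substack{\pi\in\operatorname{NC}(n)\\ \pi\vee\sigma=\hat\tau}}\kappa_\pi[a_1,\dots,a_n]
\]
for every $\tau\in\operatorname{NC}(m)$; the lemma is the case $\tau=1_m$ (so $\hat\tau=1_n$).

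Since the moment-cumulant formula~(\ref{free-cumulants}) is polynomial with integer coefficients, the assertion is a universal combinatorial identity and no $k$-specific subtlety enters. I would argue by induction on $\tau$ along the reversed-refinement order of $\operatorname{NC}(m)$. \emph{Base case} $\tau=0_m$: here $\hat\tau=\sigma$ and $\pi\vee\sigma=\sigma\Leftrightarrow\pi\leq\sigma$; applying the moment-cumulant formula inside each $\sigma$-block gives
\[
\sum_{\substack{\pi\in\operatorname{NC}(n)\\ \pi\leq\sigma}}\kappa_\pi[a_1,\dots,a_n]
=\prod_{i=1}^m\phi(b_i)
=\kappa_{0_m}[b_1,\dots,b_m],
\]
which is $(\star_{0_m})$.

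\emph{Inductive step.} Assuming $(\star_{\tau'})$ for all $\tau'<\tau$, I would sum both sides over $\tau'\leq\tau$. On the LHS the blockwise moment-cumulant formula applied to $(b_1,\dots,b_m)$ yields $\sum_{\tau'\leq\tau}\kappa_{\tau'}[b_1,\dots,b_m]=\prod_{V\in\tau}\phi\bigl(\prod_{i\in V}b_i\bigr)$. For the RHS I invoke two combinatorial facts specific to the interval partition $\sigma$:
\begin{enumerate}
\item[(i)] the map $\tau'\mapsto\hat{\tau'}$ is a lattice isomorphism $\operatorname{NC}(m)\xrightarrow{\sim}[\sigma,1_n]$;
\item[(ii)] since $\sigma\leq\hat\tau$, for every $\pi\in\operatorname{NC}(n)$ one has $\pi\leq\hat\tau\Leftrightarrow\pi\vee\sigma\leq\hat\tau$.
\end{enumerate}
Together (i)-(ii) give the disjoint decomposition $\bigsqcup_{\tau'\leq\tau}\{\pi:\pi\vee\sigma=\hat{\tau'}\}=\{\pi\in\operatorname{NC}(n):\pi\leq\hat\tau\}$, so the summed RHS equals $\sum_{\pi\leq\hat\tau}\kappa_\pi[a_1,\dots,a_n]=\prod_{W\in\hat\tau}\phi\bigl(\prod_{k\in W}a_k\bigr)$ by the blockwise moment-cumulant formula on $(a_1,\dots,a_n)$. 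Finally, each block $W$ of $\hat\tau$ is the union of the $\sigma$-blocks indexed by a single $V\in\tau$, and the ordered product of the $a_k$'s over $W$ is exactly $\prod_{i\in V}b_i$, so the two summed expressions coincide. The induction hypothesis cancels the $\tau'<\tau$ contributions and leaves precisely $(\star_\tau)$.

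The main obstacle is the combinatorial content of (i)-(ii), which really does depend on $\sigma$ being an \emph{interval} partition: one needs that the lift of a non-crossing partition remains non-crossing in $\operatorname{NC}(n)$, and that $\,\cdot\vee\sigma$ plays well with the refinement order below $\hat\tau$. These are classical properties of the Kreweras lattice, but the whole weight of the argument rests on them; once they are in hand the remainder is an application of the moment-cumulant formula.
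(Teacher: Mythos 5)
Your proof is correct, and it is essentially the standard argument for this product formula: the paper itself gives no proof but merely cites Nica--Speicher (Theorem 11.12 there), whose proof runs exactly along your lines --- prove the refined identity $(\star_\tau)$ for all $\tau$ using the lattice isomorphism $\operatorname{NC}(m)\cong[\sigma,1_n]$ for the interval partition $\sigma$ and then invert (your induction over the reversed refinement order is equivalent to their M\"obius inversion). The two combinatorial facts you isolate, in particular that lifting along an interval partition preserves non-crossingness in both directions, are indeed the crux, and your handling of the base case and the cancellation step is sound.
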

\begin{proof}
Cf.~\cite{NS}, p.180.
\end{proof}
\begin{proof}[Proof of Proposition~\ref{mult_free_conv}]
It follows from Lemma~\ref{1112} that the left side of equation~(\ref{kumulant_mult}) is equal to 
$$
\sum_{\substack{\pi\in\operatorname{NC}(2n)\\ \pi\vee\sigma=1_{2n}}}\kappa_{\pi}[a_1,b_1,a_2,\dots,b_{n-1},a_n,b_n],
$$
where $\sigma$ is the partition of the set $[2n]$ into pairs of successive elements, i.e. $\sigma:=\{(1,2),\dots, (2n-1,2n)\}$ and $\vee$ is the join. As mixed cumulants vanish, only those terms contribute which are indexed by non-crossing partitions $\pi$, which are a union of a non-crossing partition $\pi_a$ of $\{a_1,a_2,\dots,a_n\}$ and a non-crossing partition $\pi_b$ of $\{b_1,b_2,\dots,b_n\}$. The condition $\pi\vee\sigma=1_{2n}$ for such a partition $\pi$ is equivalent to $\pi_b=K(\pi_a)$, in which case the above sum becomes exactly the sum in equation~(\ref{kumulant_mult}).
\end{proof}
The following statements involve the operation of the ``boxed convolution $\boxtimes$", which is introduced in Section~\ref{boxconv_sec}, Definition~\ref{boxconv_df}. The essence is that for certain subsets, namely the (combinatorially) free ones, we get maps respecting the given algebraic structures.

\begin{prop}
\label{R-trafo-group-morph}
Let $(\mathcal{A},\phi)$ be a non-commutative $R-k$-probability space. The 
$\mathcal{R}$-transform is a morphism
$$
\mathcal{R}:\mathcal{A}^s\rightarrow R_+\langle\langle z_1,\dots, z_s\rangle\rangle,
$$
for those $\underline{a}=(a_1,\dots,a_s), \underline{b}=(b_1,\dots,b_s)\in\mathcal{A}^s$, for which the sets $\{a_1,\dots,a_s\}$ and $\{b_1,\dots,b_s\}$ are combinatorially free, i.e. we have
\begin{itemize}
\item in the {\bf additive} case 
\begin{equation}
\mathcal{R}(\underline{a}+\underline{b})=\mathcal{R}(\underline{a})+_R\mathcal{R}(\underline{b}),
\end{equation}
\item in the {\bf multiplicative} case
\begin{equation}
\label{R_trafo_mult}
\mathcal{R}(\underline{a}\star\underline{b})=\mathcal{R}(\underline{a})\boxtimes\mathcal{R}(\underline{b}).
\end{equation}
\end{itemize}
\end{prop}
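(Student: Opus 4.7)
The strategy is to verify both statements coefficient-wise, working word by word on the non-commutative power series. Since $\mathcal{R}(\underline{a})$ has coefficient $\kappa_{|w|}(\underline{a}_w)$ at $z_w$, the statements amount to identities between free cumulants of $\underline{a}+\underline{b}$ or $\underline{a}\star\underline{b}$ and those of $\underline{a}$ and $\underline{b}$ separately. The key inputs are the two preceding propositions on sums and products of free tuples, Proposition~\ref{mult_free_conv} in particular, plus the combinatorial definition of $\boxtimes$ that will appear in Section~\ref{boxconv_sec}.

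For the additive case, fix a word $w=(i_1\dots i_n)\in[s]^*$ and look at the coefficient of $z_w$ in $\mathcal{R}(\underline{a}+\underline{b})$. By $k$-multilinearity of $\kappa_n$, expand
\[
\kappa_n\bigl(a_{i_1}+b_{i_1},\dots,a_{i_n}+b_{i_n}\bigr)
\]
into $2^n$ summands indexed by the set of words in the two-letter alphabet $\{a,b\}$. Each mixed word has arguments drawn from both $\{a_1,\dots,a_s\}$ and $\{b_1,\dots,b_s\}$, so combinatorial freeness of these two sets forces the corresponding cumulant to vanish. Only the pure $a$- and pure $b$-words survive, and they contribute $\kappa_n(\underline{a}_w)+\kappa_n(\underline{b}_w)$, which is exactly the coefficient of $z_w$ in $\mathcal{R}(\underline{a})+_R\mathcal{R}(\underline{b})$. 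This is simply the multi-variable upgrade of the preceding proposition and requires nothing new beyond expanding in a tree of $2^n$ leaves.

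For the multiplicative case, again fix $w=(i_1\dots i_n)$ and compute the coefficient of $z_w$ in $\mathcal{R}(\underline{a}\star\underline{b})$:
\[
\kappa_n\bigl((\underline{a}\star\underline{b})_w\bigr)=\kappa_n\bigl(a_{i_1}b_{i_1},\dots,a_{i_n}b_{i_n}\bigr).
\]
Apply Proposition~\ref{mult_free_conv} to the $n$-tuples $(a_{i_1},\dots,a_{i_n})$ and $(b_{i_1},\dots,b_{i_n})$, which are extracted from the free sets $M_1,M_2$ and are therefore themselves free. This yields
\[
\kappa_n\bigl((\underline{a}\star\underline{b})_w\bigr)=\sum_{\pi\in\operatorname{NC}(n)}\kappa_{\pi}[\underline{a}_w]\cdot_R\kappa_{K(\pi)}[\underline{b}_w].
\]
It remains to recognise the right-hand side as the $z_w$-coefficient of $\mathcal{R}(\underline{a})\boxtimes\mathcal{R}(\underline{b})$. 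This is the defining formula of the boxed convolution that will be given in Definition~\ref{boxconv_df}: for $f,g\in R_+\langle\langle z_1,\dots,z_s\rangle\rangle$ one has $(f\boxtimes g)_w=\sum_{\pi\in\operatorname{NC}(|w|)} f_{\pi}[w]\cdot g_{K(\pi)}[w]$, where $f_\pi[w]$ denotes the product of the coefficients $f_{w|_V}$ over the blocks $V\in\pi$. Taking $f=\mathcal{R}(\underline{a})$ and $g=\mathcal{R}(\underline{b})$ and using that the restriction of the cumulant $\kappa_\pi[\underline{a}_w]$ to a block $V$ reproduces $\mathcal{R}(\underline{a})_{w|_V}$, the two expressions agree termwise.

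The additive identity is completely routine once the vanishing of mixed cumulants is invoked. The main obstacle, and the only step requiring real input, is the multiplicative identity: it rests on Proposition~\ref{mult_free_conv}, whose proof in turn hinges on Lemma~\ref{1112} and the fact that the join condition $\pi\vee\sigma=1_{2n}$ couples a non-crossing partition on the $a$-side to its Kreweras complement on the $b$-side. Once that Kreweras-complement structure is in place, matching with the forthcoming definition of $\boxtimes$ is tautological, and no further combinatorics is needed.
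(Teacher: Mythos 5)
Your proposal is correct and follows exactly the route the paper intends: the paper states this proposition without a written proof, treating it as the coefficient-wise assembly of the two preceding cumulant propositions (the $2^n$-term multilinear expansion with vanishing mixed cumulants for addition, and Proposition~\ref{mult_free_conv} for multiplication) matched against Definition~\ref{boxconv_df} of $\boxtimes$. You have filled in precisely that argument, including the correct identification of $\kappa_\pi[\underline{a}_w]$ with $X_{w,\pi}(\mathcal{R}(\underline{a}))$, so nothing is missing.
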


As both, the moment- and cumulant series take their values in the same set of non-commutative power series, one should consider them as {\bf two different co-ordinate systems}. In order to relate them, one has to consider a special distribution first.

\begin{df}
The {\bf Zeta distribution} is the linear functional corresponding to $\underline{1}_{\mathcal{A}}=(1_{\mathcal{A}},\dots,1_{\mathcal{A}})$, cf.~(\ref{s-distribution}). It is the unit with respect to component-wise multiplication. 

The {\bf Zeta function} $\operatorname{Zeta}(z_1,\dots, z_s)$, is the moment series of $\underline{1}_{\mathcal{A}}$, given by the power series with all its coefficients equal to $1_R$, i.e.
$$
\operatorname{Zeta}_s:=\operatorname{Zeta}(z_1,\dots,z_s):=\mathcal{M}_{(1_{\mathcal{A}},\dots,1_{\mathcal{A}})}(z_1,\dots, z_s)=\sum_{|w|\geq1}1_R z_w~.
$$
\end{df}
As in~\cite{NS}, we have 
\begin{prop}[Moment-cumulant formula]
\label{Mom_Cumul}
The Zeta function induces a bijection between the free cumulant and the free moment series by right translation, i.e. by $\boxtimes$-multiplication from the right. For any $\underline{a}\in\mathcal{A}^s$, we have 
\begin{equation}
\label{Mom-cum-zeta}
\mathcal{R}(\underline{a})\boxtimes\operatorname{Zeta}_s=\mathcal{M}(\underline{a}),
\end{equation}
i.e. 
\begin{equation*}
\label{RSmu}
\begin{xy}
  \xymatrix{
 &  \mathcal{A}^s\ar[dl]_{\text{$\mathcal{R}$-transform}\quad} \ar[dr]^{\quad\text{$\mathcal{M}$-transform}}   &\\
                  R_+\langle\langle z_1,\dots, z_s\rangle\rangle\ar[rr]^{\boxtimes\operatorname{Zeta}}           &    & R_+\langle\langle z_1,\dots, z_s\rangle\rangle
  }
\end{xy}
\end{equation*}
commutes.
\end{prop}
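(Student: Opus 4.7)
The plan is to prove the identity $\mathcal{R}(\underline{a})\boxtimes\operatorname{Zeta}_s=\mathcal{M}(\underline{a})$ by comparing coefficients of $z_w$ on both sides for every word $w\in[s]^*$ with $|w|\geq 1$, and then to deduce the bijection claim from the fact that $\operatorname{Zeta}_s$ is a unit in the $\boxtimes$-convolution structure.

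First, I would unfold the definition of the boxed convolution (Definition~\ref{boxconv_df}, to be given in Section~\ref{boxconv_sec}), which, in view of Proposition~\ref{mult_free_conv}, must read, for two series $f,g\in R_+\langle\langle z_1,\dots,z_s\rangle\rangle$ and a word $w=(i_1\dots i_n)$,
\begin{equation*}
(f\boxtimes g)_w \;=\; \sum_{\pi\in\operatorname{NC}(n)} f_\pi[w]\cdot_R g_{K(\pi)}[w],
\end{equation*}
where $f_\pi[w]:=\prod_{V\in\pi} f_{w|_V}$ and $w|_V$ denotes the subword of $w$ indexed by the positions in the block $V$. Applied to $f=\mathcal{R}(\underline{a})$ and $g=\operatorname{Zeta}_s$, whose coefficients are $f_{w|_V}=\kappa_{|V|}(\underline{a}_{w|_V})$ and $g_{w|_V}=1_R$, respectively, this immediately gives
\begin{equation*}
\bigl(\mathcal{R}(\underline{a})\boxtimes\operatorname{Zeta}_s\bigr)_w \;=\; \sum_{\pi\in\operatorname{NC}(n)}\kappa_\pi[\underline{a}_w]\cdot 1_R \;=\; \sum_{\pi\in\operatorname{NC}(n)}\kappa_\pi[\underline{a}_w].
\end{equation*}

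Second, I would invoke the defining moment-cumulant formula~(\ref{free-cumulants}), which says precisely that the right-hand side equals $\phi(\underline{a}_w)=m_w(\underline{a}_w)$. Since by definition~(\ref{moment_series}) the coefficient of $z_w$ in $\mathcal{M}(\underline{a})$ is also $m_w(\underline{a}_w)$, the two series agree coefficient by coefficient, which proves the identity~(\ref{Mom-cum-zeta}).

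For the bijection claim, I would observe that the series $\operatorname{Zeta}_s$ has all its degree-one coefficients equal to $1_R$, and in particular these are units in $R$; by the standard fact that the monoid $(R_+\langle\langle z_1,\dots,z_s\rangle\rangle,\boxtimes)$ is in fact a group on the subset whose degree-one coefficients are invertible (which will be established together with Definition~\ref{boxconv_df} and is the content of the group structure used in Proposition~\ref{R-trafo-group-morph}), $\operatorname{Zeta}_s$ admits a $\boxtimes$-inverse $\operatorname{Moeb}_s$. Right translation by $\operatorname{Zeta}_s$ is therefore a bijection on the relevant space of power series, with inverse right translation by $\operatorname{Moeb}_s$, so the map $\mathcal{R}(\underline{a})\mapsto\mathcal{M}(\underline{a})$ is a bijection between cumulant and moment series. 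The only genuinely non-routine ingredient in this plan is the proper setting up of the definition of $\boxtimes$ so that the coefficient identity above is a direct consequence; once that is in place, the rest is immediate from~(\ref{free-cumulants}).
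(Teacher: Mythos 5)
Your proof is correct and is essentially the standard argument the paper has in mind when it writes ``As in~\cite{NS}'': unfolding Definition~\ref{boxconv_df} with $g=\operatorname{Zeta}_s$ collapses the coefficient of $z_w$ in $\mathcal{R}(\underline{a})\boxtimes\operatorname{Zeta}_s$ to $\sum_{\pi\in\operatorname{NC}(|w|)}\kappa_\pi[\underline{a}_w]$, which is $\phi(\underline{a}_w)$ by~(\ref{free-cumulants}), and invertibility of $\operatorname{Zeta}_s$ (Definition~\ref{Moebius}) together with associativity of $\boxtimes$ gives the bijection. The paper itself omits the details and defers to Nica--Speicher, so your write-up simply supplies them.
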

\begin{df}
\label{Moebius}
The {\bf Möbius function}, $\operatorname{Moeb}_s:=\operatorname{Moeb}(z_1,\dots,z_s)$, is the inverse to $\operatorname{Zeta}_s$ with respect to $\boxtimes$, i.e.
$$
\operatorname{Moeb}_s\boxtimes\operatorname{Zeta}_s=\operatorname{Zeta}_s\boxtimes\operatorname{Moeb}_s=z_1+\dots+z_s. 
$$
\end{df}
The above discussion shows that also in the case of moments we have algebraic morphisms, as there exist the two binary operations  $\boxplus_V$ and $\boxtimes_V$ on the set $R_+\langle\langle z_1,\dots, z_s\rangle\rangle$.
\begin{prop}
\label{V_multi_oper}
Let $(\mathcal{A},\phi)$ be a non-commutative $R-k$-probability space. If $\underline{a}=(a_1,\dots,a_s), \underline{b}=(b_1,\dots,b_s)\in\mathcal{A}^s$ are such that the sets $\{a_1,\dots,a_s\}$ and $\{b_1,\dots,b_s\}$ are combinatorially free, then the
$\mathcal{M}$-transform defines algebraic morphisms
$$
\mathcal{M}:\mathcal{A}^s\rightarrow R_+\langle\langle z_1,\dots, z_s\rangle\rangle,
$$ 
i.e. we have
\begin{itemize}
\item in the {\bf additive} case 
\begin{equation}
\mathcal{M}(\underline{a}+\underline{b})=\mathcal{M}(\underline{a})\boxplus_V\mathcal{M}(\underline{b}),
\end{equation}
\item in the {\bf multiplicative} case
\begin{equation}
\label{R_trafo_mult}
\mathcal{M}(\underline{a}\star\underline{b})=\mathcal{M}(\underline{a})\boxtimes_V\mathcal{M}(\underline{b}).
\end{equation}
\end{itemize}
The relation between $+$ and $\boxplus_V$, and between $\boxtimes$ and $\boxtimes_V$, respectively, is given by 
$$
\boxplus_V=\left(\bullet_1\boxtimes\operatorname{Moeb}_s+\bullet_2\boxtimes\operatorname{Moeb}_s\right)\boxtimes\operatorname{Zeta}_s,
$$
and
$$
\boxtimes_V=\bullet_1\boxtimes\operatorname{Moeb}_s\boxtimes\bullet_2,
$$
with $\bullet_i$, $i=1,2$, denoting the first and the second argument (in moment co-ordinates), respectively.
\end{prop}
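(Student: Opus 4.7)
The plan is to take the two displayed formulas for $\boxplus_V$ and $\boxtimes_V$ as the \emph{definitions} of these binary operations on $R_+\langle\langle z_1,\dots,z_s\rangle\rangle$ (which makes sense by Definition~\ref{Moebius}), and then to derive both morphism identities by conjugating the analogous statements for the $\mathcal{R}$-transform (Proposition~\ref{R-trafo-group-morph}) through the moment-cumulant change of co-ordinates (Proposition~\ref{Mom_Cumul}). The ingredients I will use are the two mutually inverse $R$-module automorphisms $\bullet\boxtimes\operatorname{Zeta}_s$ and $\bullet\boxtimes\operatorname{Moeb}_s$, the associativity of the boxed convolution $\boxtimes$ from Section~\ref{boxconv_sec}, and the fact that combinatorial freeness of $\{a_1,\dots,a_s\}$ and $\{b_1,\dots,b_s\}$ is precisely what makes $\mathcal{R}$ turn $+$ into $+$ and $\star$ into $\boxtimes$.

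For the additive case I would compute, assuming the combinatorial freeness hypothesis,
\begin{align*}
\mathcal{M}(\underline{a}+\underline{b})
&= \mathcal{R}(\underline{a}+\underline{b})\boxtimes\operatorname{Zeta}_s \\
&= \bigl(\mathcal{R}(\underline{a})+\mathcal{R}(\underline{b})\bigr)\boxtimes\operatorname{Zeta}_s \\
&= \bigl(\mathcal{M}(\underline{a})\boxtimes\operatorname{Moeb}_s + \mathcal{M}(\underline{b})\boxtimes\operatorname{Moeb}_s\bigr)\boxtimes\operatorname{Zeta}_s,
\end{align*}
using Proposition~\ref{Mom_Cumul} in lines one and three and Proposition~\ref{R-trafo-group-morph} in line two. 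The resulting expression is exactly $\mathcal{M}(\underline{a})\boxplus_V\mathcal{M}(\underline{b})$ by the stated formula. The only non-formal input here is right-distributivity of $\boxtimes$ over componentwise $+$, which follows directly from the definition of boxed convolution as an $R$-linear combination of partition-indexed monomials in Section~\ref{boxconv_sec}.

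For the multiplicative case I would combine associativity of $\boxtimes$ with Proposition~\ref{Mom_Cumul} applied to each of the two factors in turn:
\begin{align*}
\mathcal{M}(\underline{a}\star\underline{b})
&= \mathcal{R}(\underline{a}\star\underline{b})\boxtimes\operatorname{Zeta}_s
   = \mathcal{R}(\underline{a})\boxtimes\mathcal{R}(\underline{b})\boxtimes\operatorname{Zeta}_s \\
&= \mathcal{R}(\underline{a})\boxtimes\mathcal{M}(\underline{b})
   = \mathcal{M}(\underline{a})\boxtimes\operatorname{Moeb}_s\boxtimes\mathcal{M}(\underline{b}),
\end{align*}
which is precisely $\mathcal{M}(\underline{a})\boxtimes_V\mathcal{M}(\underline{b})$.

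In summary, once $\boxplus_V$ and $\boxtimes_V$ are defined by the stated conjugation formulas, the proposition is a formal consequence of Propositions~\ref{Mom_Cumul} and~\ref{R-trafo-group-morph} together with associativity of $\boxtimes$ and right-distributivity of $\boxtimes$ over $+$. The main obstacle is therefore not conceptual but notational, lying in the careful verification that $\boxtimes$ does distribute over $+$ on the right, a fact that has to be read off from the partition-sum definition of boxed convolution but is not an additional hypothesis.
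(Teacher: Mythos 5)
Your two displayed chains of equalities are correct and coincide with the paper's own (one-line) proof, which cites exactly the same three ingredients: the moment--cumulant identity $\mathcal{M}(\underline{a})=\mathcal{R}(\underline{a})\boxtimes\operatorname{Zeta}_s$ of Proposition~\ref{Mom_Cumul}, the definition of $\operatorname{Moeb}_s$ as the $\boxtimes$-inverse of $\operatorname{Zeta}_s$ (Definition~\ref{Moebius}), and the additivity/multiplicativity of $\mathcal{R}$ on combinatorially free tuples (Proposition~\ref{R-trafo-group-morph}). Reading the two displayed formulas for $\boxplus_V$ and $\boxtimes_V$ as definitions, your final lines are literally those formulas evaluated at $\mathcal{M}(\underline{a})$ and $\mathcal{M}(\underline{b})$, so nothing further is needed beyond associativity of $\boxtimes$ and the unit property $\operatorname{Zeta}_s\boxtimes\operatorname{Moeb}_s=1_{\mathfrak{M}^s(R)}$ (which give $\mathcal{R}(\underline{a})=\mathcal{M}(\underline{a})\boxtimes\operatorname{Moeb}_s$).

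That said, two side remarks in your write-up are false and directly contradict the paper, and you should delete them. First, $\bullet\boxtimes\operatorname{Zeta}_s$ and $\bullet\boxtimes\operatorname{Moeb}_s$ are mutually inverse \emph{bijections}, not $R$-module automorphisms: $X_w(f\boxtimes\operatorname{Zeta}_s)=\sum_{\pi\in\operatorname{NC}(|w|)}\prod_{V\in\pi}f_{w|V}$ is a genuinely nonlinear (polynomial) function of the coefficients of $f$. Second, and more importantly, your claimed ``only non-formal input,'' namely right-distributivity of $\boxtimes$ over componentwise $+$, does not hold: by Lemma~\ref{lem:nonlinear} the operators $X_{w,\pi}$ are not $R$-linear, and Proposition~\ref{prop_non-distributive} states explicitly that $\boxtimes$ does not distribute over $+$ (the same failure occurs in the right-hand argument, since $X_{w,K(\pi)}(g+h)=\prod_{W\in K(\pi)}(g_{w|W}+h_{w|W})$ is not additive unless $K(\pi)$ is a single block). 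Fortunately your computation never actually distributes: in the additive case you carry the sum $\mathcal{R}(\underline{a})+\mathcal{R}(\underline{b})$ intact inside the outer $\boxtimes\operatorname{Zeta}_s$, which is precisely the shape of the defining formula for $\boxplus_V$. So the proof stands once the erroneous justification is removed; had you instead tried to simplify $(\mathcal{R}(\underline{a})+\mathcal{R}(\underline{b}))\boxtimes\operatorname{Zeta}_s$ to $\mathcal{M}(\underline{a})+\mathcal{M}(\underline{b})$, the argument would have collapsed, and indeed that is exactly why $\boxplus_V$ is not plain addition of moment series.
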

\begin{proof}
This follows  from the moment-cumulant formula, by using the identity~(\ref{Mom-cum-zeta}), Definition~\ref{Moebius} and Proposition~\ref{R-trafo-group-morph}.
\end{proof}

\begin{rem}
It is important to point out that the above is not a tautology. Namely, if we had directly used the definition of freeness, we would have eventually found these statements, as Voiculescu did in the one-variable case~\cite{V}. 
However, by using the combinatorial approach via free cumulants, one arrives naturally at these results.
\end{rem}
\subsection{The operation of boxed convolution}
\label{boxconv_sec}
First we introduce two functionals on power series in non-commuting variables with commutative coefficients. Let $R\in\mathbf{cAlg}_k$, $s\in\N$ and $w\in[s]^*$. The {\bf projection} $X_w$ onto the $w$th component is defined by
\begin{equation}
\label{co-ordinate_map}
X_w : R\langle\langle z_1,\dots, z_s\rangle\rangle \rightarrow  R~, \qquad
f \mapsto  X_w(f):=a_w,
\end{equation}
which $R$-linearly assigns to a power series $f$ its $w$th coefficient $a_w$. Equally, for a word $w=(i_1\dots i_n)$, we may write $X_{(i_1\dots i_n)}$. 

Next, we define a restriction operator. Let  $\pi=(V_1,\dots, V_r)\in\operatorname{NC}(n)$ be a non-crossing partition which is composed of $r\geq 1$ blocks, with $V_j=(v_1,\dots, v_m)\subseteq[n]$, and consider a word $w=(i_1\dots i_n)$ such that $1\leq v_1<\dots <v_m\leq n$. 

The {\bf restriction of $w$ onto $V_j$}, is given by
$$
w|V_j:=(i_1\dots i_n)|V_j:=(i_{v_1}\dots i_{v_m})\in\{1,\dots, s\}^m.
$$
\begin{exmpl}
\end{exmpl}
Let $n=7$ and $V=\{1347\}$, which corresponds to $m=4$. Then we have
$$
(i_1\dots i_7)|V=(i_1\bullet_2i_3 i_4\bullet_5\bullet_6 i_7)=(i_1i_3 i_4 i_7),
$$
where $\bullet_j$ indicates that the $j$th position has been deleted.

\begin{df}
\label{def:box-proj-op}
Let $f\in R_+\langle\langle z_1,\dots,z_s\rangle\rangle$, $w$ be a word and $\pi\in\operatorname{NC}(n)$, both as above. There exists an operator $X_{w|\pi}:R_+\langle\langle z_1,\dots,z_s\rangle\rangle\rightarrow R$, given by
\begin{equation}
\label{box-proj-op}
X_{w,\pi}(f):=\prod_{V_j\in\pi}X_{w|V_j}(f)~,
\end{equation}
with $X_{w|V_j}$ as in~(\ref{co-ordinate_map}), and the product taken in $R$ with respect to $\cdot_R$. The {\bf complementary} operator $X_{w,K(\pi)}$ is obtained by replacing the non-crossing partition $\pi$ by its Kreweras complement $K(\pi)$.
\end{df}
The {\bf degree} of the operator $X_{w,\pi}$, corresponding to the monomial $X_{w|V_1}\cdot...\cdot X_{w|V_r}$, is defined as the number of blocks in the partition $\pi$, i.e.
\begin{equation}
\label{deg_oper}
\deg(X_{w,\pi}):=\deg(X_{w|V_1}\cdot...\cdot X_{w|V_r})=r=|\pi|.
\end{equation}
\begin{exmpl}
\end{exmpl}
For $w=(i_1,\dots,i_7)$ and $\pi=\{\{1,7\},\{2,3,5\},\{4\},\{6\}\}$, we have according to~(\ref{box-proj-op}):
$$
X_{(i_1,\dots,i_7),\pi}(f)=X_{(i_1i_7)}(f)\cdot X_{(i_2i_3 i_5)}(f)\cdot X_{(i_4)}(f)\cdot X_{(i_6)}(f),
$$
and so, $\deg(X_{(i_1,\dots,i_7),\pi})=4$.

\begin{lem}
\label{lem:nonlinear}
The operator $X_{w,\pi}$ is not $R$-linear.
\end{lem}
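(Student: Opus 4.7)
The plan is to show non-linearity by a direct scalar-multiplication computation, using only the definition
$X_{w,\pi}(f) = \prod_{V_j\in\pi} X_{w|V_j}(f)$
and the fact that each coefficient projection $X_{w|V_j}:R\langle\langle z_1,\dots,z_s\rangle\rangle\to R$ is itself $R$-linear. The key observation is that for $|\pi|=r$ blocks, $X_{w,\pi}$ is the product of $r$ linear functionals, and a product of $r\geq 2$ linear functionals is not linear in general.

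Concretely, I would first choose a non-crossing partition $\pi\in\operatorname{NC}(n)$ with $r:=|\pi|\geq 2$ (this is the only non-trivial case: when $\pi=1_n$ the operator reduces to a single coefficient projection $X_w$, which is linear). Then for any $\lambda\in R$ and any $f\in R_+\langle\langle z_1,\dots,z_s\rangle\rangle$, the $R$-linearity of each $X_{w|V_j}$ gives
\begin{equation*}
X_{w,\pi}(\lambda f)=\prod_{V_j\in\pi} X_{w|V_j}(\lambda f)=\prod_{V_j\in\pi}\lambda\cdot X_{w|V_j}(f)=\lambda^{r}\,X_{w,\pi}(f).
\end{equation*}
Since $r\geq 2$, the equality $\lambda^{r} X_{w,\pi}(f)=\lambda\, X_{w,\pi}(f)$ would force $(\lambda^{r}-\lambda)X_{w,\pi}(f)=0$ for all $\lambda\in R$ and all $f$, which fails as soon as $R$ contains an element $\lambda$ with $\lambda^{r}\neq\lambda$ and $f$ is chosen with each $X_{w|V_j}(f)$ a non-zero non-zero-divisor (e.g.\ $f=\sum_{w\in[s]^*_+} 1_R z_w$, so that every $X_{w|V_j}(f)=1_R$ and $X_{w,\pi}(f)=1_R$). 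This exhibits a failure of homogeneity, which already contradicts $R$-linearity.

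For completeness one can also check failure of additivity: expanding $X_{w,\pi}(f+g)=\prod_j\bigl(X_{w|V_j}(f)+X_{w|V_j}(g)\bigr)$ produces $2^r$ mixed terms, whereas $X_{w,\pi}(f)+X_{w,\pi}(g)$ has only two; picking $f,g$ so that a mixed term is non-zero settles the matter. I do not expect any real obstacle here, since the statement is essentially a tautology about products of linear forms; the only mildly subtle point is to exclude the degenerate case $|\pi|=1$ from the claim, which is why the proof is phrased for the generic (and intended) regime $|\pi|\geq 2$, implicit in Definition~\ref{def:box-proj-op} where $\pi$ plays a non-trivial combinatorial role.
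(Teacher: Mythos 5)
Your proof is correct, and it takes a mildly different primary route from the paper's. The paper proves the lemma purely through failure of additivity: it expands $X_{w,\pi}(f+g)=\prod_{V_j\in\pi}\bigl(X_{w|V_j}(f)+X_{w|V_j}(g)\bigr)$ and observes that this differs in general from $X_{w,\pi}(f)+X_{w,\pi}(g)$, exactly as in your closing paragraph. Your main argument instead attacks homogeneity, computing $X_{w,\pi}(\lambda f)=\lambda^{r}X_{w,\pi}(f)$ with $r=|\pi|$. This buys you a very clean, explicit witness (the all-ones series, on which every block projection evaluates to $1_R$), but it comes with a hypothesis the paper's route does not need: you must assume $R$ contains some $\lambda$ with $\lambda^{r}\neq\lambda$, which fails for instance when $R$ is a Boolean ring and $r=2$. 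The additivity argument is the more robust of the two (already over $\mathbb{F}_2$ one finds $f,g$ with a nonvanishing mixed term), so it is good that you include it; strictly speaking it is that half of your write-up, not the homogeneity computation, that establishes the lemma in full generality over an arbitrary $R\in\mathbf{cAlg}_k$. You are also more careful than the paper on one point: for $\pi=1_n$ (a single block, $r=1$) the operator reduces to the coefficient projection $X_w$ and \emph{is} $R$-linear, so the lemma as stated should be read as asserting non-linearity in the generic regime $|\pi|\geq 2$; the paper's ``holds in general'' glosses over this, and your explicit exclusion of the degenerate case is a genuine improvement in precision.
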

\begin{proof} Let $f,g\in R_+\langle\langle z_1,\dots,z_s\rangle\rangle$ be arbitrary. By~(\ref{box-proj-op}), we then have: 
\begin{eqnarray}
\label{non-lin}
X_{w,\pi}(f+g) & = & \prod_{V_j\in\pi}X_{w|V_j}(f+g)=\prod_{V_j\in\pi}\left(X_{w|V_j}(f)+X_{w|V_j}(g)\right) \\\nonumber
 & \neq & \prod_{V_j\in\pi}X_{w|V_j}(f)+\prod_{V_j\in\pi}X_{w|V_j}(g)=X_{w,\pi}(f)+X_{w,\pi}(g).
\end{eqnarray}
where $``\neq"$ in the last line, holds in general.
\end{proof}

Now, we are ready to introduce the main object of this study. Originally it was derived from the expressions governing the multiplication of free random variables, as it appeared in  Proposition~\ref{mult_free_conv}.

\begin{df}
\label{boxconv_df}
Let $R\in\mathbf{cAlg}_k$. The {\bf boxed convolution} $\boxtimes$, is a binary operation 
\begin{eqnarray}
\label{boxconv}
R_+\langle\langle x_1,\dots, x_s\rangle\rangle\times R_+\langle\langle x_1,\dots, x_s\rangle\rangle&\rightarrow& R_+\langle\langle x_1,\dots, x_s\rangle\rangle,\\\nonumber
(f,g)&\mapsto&f\boxtimes g,
\end{eqnarray}
which for every word $w=(i_1,\dots, i_n)$, and integer $n\geq1$, satisfies:
\begin{eqnarray}
\label{boxed_convolution}
X_{w}(f\boxtimes g)&=&\sum_{\pi\in\operatorname{NC}(|w|)} X_{w,\pi}(f)\cdot_R X_{w,K(\pi)}(g)\\\nonumber
&=&
\sum_{\pi\in\operatorname{NC}(|w|)} X_{(i_1,\dots, i_{n}),\pi}(f)\cdot_R X_{(i_1,\dots, i_{n}),K(\pi)}(g).
\end{eqnarray}
\end{df}
\begin{rem}
The operation~(\ref{boxed_convolution}) is given {\em explicitly} which yields {\em universal polynomials} with {\em integer coefficients}, completely determined by the underlying combinatorics. 

The calculation of a coefficient in~(\ref{boxed_convolution}) of degree $|w|$ depends strictly on the coefficients of terms of degree~$\leq|w|$.
\end{rem}

Let us note that it follows from Proposition~\ref{V_multi_oper} that the same is true for the operations $\boxplus_V$ and $\boxtimes_V$, defined for moment series. In the one-dimensional case, we recover the polynomials originally described by Voiculescu, cf.~\cite{VDN}.

\begin{prop}
\label{prop_non-distributive}
The operation $\boxtimes$ has the following properties.
\begin{enumerate} 
\item The boxed convolution is associative.
\item The operation does {\bf not distribute} in general over point-wise addition,  i.e. for  $f,\tilde{f},g\in R_+\langle\langle x_1,\dots, x_s\rangle\rangle$, we have 
\begin{equation}
\label{not-distributive}
(f+\tilde{f})\boxtimes g\neq f\boxtimes g+\tilde{f}\boxtimes g~.
\end{equation}
\item For $s=1$, it is always commutative, and for $s\geq 2$, non-commutative, in general.
\end{enumerate}
\end{prop}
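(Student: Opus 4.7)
The plan is to address the three claims (1), (2), (3) separately and in order.

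For (1) associativity, the cleanest route is combinatorial: expanding $X_w((f\boxtimes g)\boxtimes h)$ via Definition~\ref{boxconv_df} produces an iterated sum indexed by a pair $(\pi,\{\sigma_V\}_{V\in\pi})$ consisting of $\pi\in\operatorname{NC}(|w|)$ together with, for each block $V$ of $\pi$, a non-crossing partition $\sigma_V$ of $V$ arising from the inner $\boxtimes$. Expanding $X_w(f\boxtimes(g\boxtimes h))$ produces a similar iterated sum over pairs $(\tau,\{\rho_V\}_{V\in K(\tau)})$. Matching the two indexing sets is a Kreweras-type bijection of ``three-tier'' non-crossing configurations, driven by the block-count identity $|\pi|+|K(\pi)|=n+1$ of~(\ref{part-Krew_rel}) and the cyclic behaviour $K^2(\pi)=$ cyclic shift of $\pi$ recorded in Remark~\ref{pi-K^2(pi)}. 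A short alternative is representational: since $\boxtimes$ is visibly given by universal polynomials with integer coefficients, it suffices to check the identity after specialising to $k=R=\mathbb{C}$, where by Proposition~\ref{R-trafo-group-morph} associativity of $\boxtimes$ is pulled back via the $\mathcal{R}$-transform to associativity of the component-wise product $\star$ on $\mathcal{A}^s$, applied to three pairwise free tuples realised in a free product.

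For (2), non-distributivity is essentially a consequence of Lemma~\ref{lem:nonlinear}: applying $X_{w,\pi}$ with $|\pi|\ge 2$ to $f+\tilde f$ yields products over the blocks of $\pi$ whose expansion contains cross terms mixing coefficients of $f$ and of $\tilde f$, which are absent from $X_{w,\pi}(f)+X_{w,\pi}(\tilde f)$, and these cross terms persist after summing over $\pi$ in~(\ref{boxed_convolution}). To promote this to a genuine inequality of power series I will record a concrete witness already at degree two for $s=1$: taking $f=\tilde f=z$ and $g=z^{2}$, the degree-two coefficient of $(f+\tilde f)\boxtimes g=(2z)\boxtimes z^{2}$ equals $4$ (only the partition $\{1|2\}\in\operatorname{NC}(2)$ contributes), whereas the degree-two coefficient of $f\boxtimes g+\tilde f\boxtimes g$ equals $1+1=2$. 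The discrepancy $2f_{1}\tilde f_{1}g_{2}=2$ is exactly the unmatched cross term predicted by the general argument.

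For (3), the two cases need different arguments. If $s=1$, every word of length $n$ is $(1,\dots,1)$, so $X_{w,\pi}(h)=\prod_{V\in\pi}h_{|V|}$ depends only on the multiset of block sizes of $\pi$. Relabelling $\pi\mapsto K(\pi)$ in~(\ref{boxed_convolution}) (a bijection of $\operatorname{NC}(n)$) and using commutativity of $R$ transforms $X_w(f\boxtimes g)$ into $\sum_{\pi}X_{w,K^{-1}(\pi)}(f)\,X_{w,\pi}(g)$; since $K^{-1}(\pi)$ and $K(\pi)$ differ by the cyclic shift $K^{-2}$ of Remark~\ref{pi-K^2(pi)}, they share their block-size multisets, so $X_{w,K^{-1}(\pi)}(f)=X_{w,K(\pi)}(f)$, giving $X_w(f\boxtimes g)=X_w(g\boxtimes f)$. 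For $s\geq 2$ I exhibit an explicit counterexample at $w=(1,2,1)$. Using the Kreweras data for $\operatorname{NC}(3)$ tabulated at the end of Section~\ref{noncross-sec}, the expansion~(\ref{boxed_convolution}) has five terms; choosing $f$ with $f_{(1)}=f_{(2,1)}=1$ and all other coefficients zero, and $g$ with $g_{(2)}=g_{(1,1)}=1$ and all other coefficients zero, leaves exactly one nonzero term in $X_{(1,2,1)}(f\boxtimes g)$, namely $f_{(1)}f_{(2,1)}g_{(1,1)}g_{(2)}=1$, while every term in $X_{(1,2,1)}(g\boxtimes f)$ contains a vanishing factor from the $f$- or $g$-support. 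The main obstacle is the combinatorial bookkeeping in the associativity argument; parts (2) and (3) reduce to direct computation with~(\ref{boxed_convolution}).
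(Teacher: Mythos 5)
Your proposal is correct, and for part (1) it follows the same two routes the paper itself offers: a combinatorial bijection (which, like the paper, you ultimately leave to \cite{NS}), or transfer of associativity from $\star$ on $\mathcal{A}^s$ via freeness and Proposition~\ref{R-trafo-group-morph}. Your reduction to $k=R=\C$ via the universality of the integer polynomials is a clean way to justify that transfer; just be explicit that you need the three tuples to be free \emph{as a family} (which the free product construction does give --- ``pairwise free'' is not the right hypothesis) and that every series in $\C_+\langle\langle z_1,\dots,z_s\rangle\rangle$ is realisable as an $\mathcal{R}$-transform. Where you genuinely differ from the paper is in parts (2) and (3): the paper argues at the level of the operators $X_{w,\pi}$ --- non-distributivity because $X_{w,\pi}$ is not $R$-linear (Lemma~\ref{lem:nonlinear}), non-commutativity because $X_{w,\pi_0}\neq X_{w,K^2(\pi_0)}$ for suitable words --- which strictly speaking only shows that individual summands fail to match, leaving open the (remote) possibility of cancellation after summing over $\operatorname{NC}(|w|)$. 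Your explicit witnesses close that gap: I have checked $f=\tilde f=z$, $g=z^2$ against the $\operatorname{NC}(2)$ data (giving $4\neq 2$, valid whenever $2\neq 0$ in $R$, which suffices for an ``in general'' statement) and the supported-coefficient choice at $w=(1,2,1)$ against the $\operatorname{NC}(3)$ Kreweras table (giving $1\neq 0$), and both computations are correct. Finally, your relabelling argument for commutativity at $s=1$ --- substituting $\pi\mapsto K(\pi)$ and using that $K^{\pm2}$ preserves block-size multisets --- is equivalent to, but tidier than, the paper's pairing argument, which has to treat the parity of $|\operatorname{NC}(n)|$ and the fixed-point-freeness of $K$ separately.
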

\begin{proof}
1.   Associativity can be shown in two different ways. Either, by transferring the associativity from the ring structure on $\mathcal{A}^s$ via freeness and the properties of the free cumulants or alternatively, in a purely combinatorial way; for both approaches, cf.~\cite{NS}, p. 275.

2. In order to have distributivity, the definition in~(\ref{boxed_convolution}) requires  $X_{w,\pi}$ to be $R$-linear. However, by Lemma~\ref{lem:nonlinear} this is not the case. 

3. We can write~(\ref{boxed_convolution}) in the form  
\begin{eqnarray*}
X_{w}(f\boxtimes g) & = & \left(X_{w,1_{|w|}}\cdot X_{w,0_{|w|}}+X_{w,0_{|w|}}\cdot X_{w,1_{|w|}}\right) \\
&& +\sum_{\text{pairs $i$}}\left(X_{w,\pi_i}(f)\cdot_R X_{w,K(\pi_i)}(g)+X_{w,K(\pi_i)}(f)\cdot_R X_{w,K^2(\pi_i)}(g)\right) \\
 & & +X_{w,\pi}(f)\cdot_R X_{w,K(\pi)}(g),\qquad\text{additionally, if $|\operatorname{NC}(|w|)|$ is odd},
\end{eqnarray*}
where the sum on the right-hand is over pairs of summands if the number of non-crossing partitions is even, and if it is odd, which happens for $|w|=2^{\ell}-1$, $\ell\in\N^{\times}$, then there is one additional unpaired summand, as shown in the last line of the above expression.

The first pair is always symmetric and if we have an odd number of non-crossing partitions, then the expression can not be symmetric, as the Kreweras complementation map has no fixed point. 

Therefore let us consider the even case.
So, $\boxtimes$ is symmetric if $X_{w,\pi}=X_{w,K^2(\pi)}$ holds for all words $w$ and partitions $\pi$. By Remark~\ref{pi-K^2(pi)} we know that $\pi$ and $K^2(\pi)$ have the same block structure but differ by a cyclic permutation. Therefore, only words which are invariant under such a permutation give symmetric contributions. 

For words over an alphabet consisting of a single letter, i.e. for $s=1$, this is always true and therefore commutativity follows.

For the general case, let us consider an alphabet consisting of at least two letters, e.g., $\{a,b\}$, and the following partition for $n\geq3$:
$$
\pi_0:=((1),(2,\dots,n)),
$$
and so
$$
K(\pi_0)=((1,n),(2),\dots,(n-1))\qquad\text{and}\quad K^2(\pi_0)=((1,\dots,n-1),(n)).
$$
For a word $\underbrace{ab\dots b}_{\text{$n$-letters}}$, $n$ even, and $\underbrace{ab\dots a}_{\text{$n$-letters}}$, $n$ odd, we get
\begin{equation}
\label{comb_table}
\begin{tabular}{c|c|c}$n$ & $\pi_0$ & $K^2(\pi_0)$ \\\hline even & $a|b\dots b$ & $a\dots a|b$ \\\hline odd & $a|b\dots a$ & $a\dots b|a$\end{tabular}
\end{equation}
in which case $X_{w,\pi_0}\neq X_{w,K^2(\pi_0)}$, i.e. for $s\geq2$ the boxed convolution is not commutative in general. 
\end{proof}

If we use  infinite vectors, indexed by words, instead of non-commutative power series, the $\boxtimes$-multiplication of two such vectors $(a_1,\dots, a_s,\dots)$ and $(b_1,\dots, b_s,\dots)$,
takes for the first $s$ entries, according to~(\ref{boxed_convolution}), the form
\begin{equation}
\label{mult_box_string}
(a_1,\dots, a_s,\dots)\boxtimes (b_1,\dots, b_s,\dots)=(a_1\cdot b_1,\dots, a_s\cdot b_s,\dots),
\end{equation}
further with the unit given by the sequence  
$$
(\underbrace{1_R,\dots,1_R}_{\text{$s$-times}}):=(\underbrace{1_R,\dots,1_R}_{\text{$s$-times}},0,0,0,\dots),
$$
and the $\boxtimes$-inverse of a vector $(r_1,\dots, r_s,\dots)$ with $r_i\in R^{\times}$, for $i\in[s]$, by an expression of the form    
\begin{equation}
\label{vect_inv}
(r^{-1}_1,\dots, r^{-1}_s,\dots).
\end{equation}

\section{The affine group schemes}
\subsection{The groups associated to the boxed convolution}
Here we introduce several functors from $\mathbf{cAlg}_k$ to $\mathbf{Set}$ of a special form. For an integer $s\geq1$, let
\begin{eqnarray*}
\mathfrak{M}^s,\mathfrak{G}^s,\dots:\mathbf{cAlg}_k &\rightarrow & \mathbf{Set} \\
R & \mapsto & R^{[s]^*},R^{[s]^*_+},\dots
\end{eqnarray*}
be maps from commutative $k$-algebras into the set of (infinite) strings over the algebra, indexed by (arbitrary) words over the alphabet $[s]$, or equivalently, non-commutative power series in $s$ non-commuting indeterminates. 

The natural morphisms are defined as follows: for $\varphi\in\Hom_{\mathbf{cAlg}_k}(R,R')$, we have
$\mathfrak{M}(\varphi):(\alpha_w)\mapsto(\varphi(\alpha_w))$, with $a_w\in R$, and equivalently  for $\mathfrak{G}^s$, etc.

We let
\begin{eqnarray*}
\mathfrak{M}^s(R)& := & \big\{\sum_{|w|\geq 1}\alpha_w z_w~|~\alpha_w\in R, w\in [s]^*\big\}\cong R^{[s]_+^*},\\
\mathfrak{G}^s(R) & := & \{(r_1,\dots, r_s,\dots, r_w,\dots)~|~r_i\in R^{\times}, i\in[s], r_w\in R, |w|\geq 2\},\\
\mathfrak{G}^s_+(R) & := & \{(1_1,\dots, 1_s,\dots, r_w,\dots)~|~r_w\in R, |w|\geq 2\},
\end{eqnarray*}
and the finite-dimensional restrictions ({\bf truncations}) of $\mathfrak{M}^s(R)$, $\mathfrak{G}^s(R)$ and $\mathfrak{G}^s_+(R)$, respectively. For $n\geq1$, we have
\begin{eqnarray*}
(\mathfrak{M}^s)_n(R)&:=& \big\{\sum_{1\leq|w|\leq n }\alpha_w z_w\big\},\\
(\mathfrak{G}^s)_n(R) & := & \big\{\sum_{1\leq|w|\leq n }\alpha_w z_w~| \alpha_i\in R^{\times}\quad\text{for $i=1,\dots, s$}\big\},\\
(\mathfrak{G}^s_+)_n(R) & := & \big\{\sum_{1\leq|w|\leq n }\alpha_w z_w~|~\alpha_i=1_R\quad\text{for $i=1,\dots, s$}\big\}.
\end{eqnarray*} 
For an integer $n\geq1$, let $\pr_n:\mathfrak{G}^s\rightarrow(\mathfrak{G}^s)_n$ denote the {\bf natural truncations}. Conversely, let $\iota_n:(\mathfrak{G}^s)_n\hookrightarrow\mathfrak{G}^s$ denote the {\bf natural embeddings} such that
$$
\sum_{1\leq|w|\leq n}\alpha_wz_w\mapsto  \sum_{1\leq|w|}\alpha_wz_w,\quad\text{with $\alpha_w=0$\quad for $|w|>n$}.
$$

For every integer $n\geq 1$, we have the strict inclusions of sets
$$
(\mathfrak{G}^s_+)_n(R)\subsetneq (\mathfrak{G}^s)_n(R)\subsetneq (\mathfrak{M}^s)_n(R),
$$
and 
$$
\mathfrak{G}^s_+(R)\subsetneq \mathfrak{G}^s(R)\subsetneq \mathfrak{M}^s(R),
$$
respectively. 
\begin{prop}
\label{Group_prop}
For $s\in\N^{\times}$, $R\in\mathbf{cAlg}_k$ and the binary operation $\boxtimes$, the following holds:
\begin{enumerate}
\item $(\mathfrak{M}^s(R),+)$ is, with component-wise addition,  an abelian group, and additive neutral element $0_{\mathfrak{M}^s(R)}$, the series with all coefficients being equal to zero.
\item $(\mathfrak{M}^s(R),\boxtimes)$ is an associative multiplicative monoid, with unit the polynomial
$$
1_{\mathfrak{M}^s(R)}=1_Rz_1+\cdots+1_R z_s.
$$
\item $(\mathfrak{M}^s(R),+,\boxtimes)$, is an associative and {\bf not distributive} unital ring. For $s\geq2$, it is not commutative.
\item $\mathfrak{G}^s(R)$ is a group with neutral element
$
1_{\mathfrak{G}^s(R)}=1_Rz_1+\cdots+1_R z_s.
$\\
For $s=1$, it is abelian and for $s\geq2$, non-abelian.
\end{enumerate}
\end{prop}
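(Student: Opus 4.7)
The plan is to verify the four items essentially in order, leaning on Proposition~\ref{prop_non-distributive} and the moment--cumulant machinery already in place. Item~1 is immediate: the coordinate-wise sum of two series without constant term is again a series without constant term, and $R$ being an abelian group under addition forces $\mathfrak{M}^s(R)$ to be one as well. Item~2 has two things to check, since associativity is already given by Proposition~\ref{prop_non-distributive}.1. The only real content is verifying the unit. Setting $e(z):=1_Rz_1+\cdots+1_R z_s$, one observes that $X_{u}(e)=1_R$ if $|u|=1$ and $0$ otherwise. Thus in the sum
\[
X_w(f\boxtimes e)=\sum_{\pi\in\operatorname{NC}(|w|)} X_{w,\pi}(f)\cdot_R X_{w,K(\pi)}(e),
\]
the factor $X_{w,K(\pi)}(e)=\prod_{V\in K(\pi)}X_{w|V}(e)$ vanishes unless every block of $K(\pi)$ is a singleton, i.e.\ $K(\pi)=0_{|w|}$. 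By~(\ref{part-Krew_rel}) and the block count, this forces $\pi=1_{|w|}$, contributing $X_{w,1_{|w|}}(f)\cdot 1_R = f_w$. Hence $f\boxtimes e=f$; the identity $e\boxtimes f=f$ is symmetric, with the roles of $\pi$ and $K(\pi)$ interchanged.

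Item~3 is already covered: non-distributivity was Proposition~\ref{prop_non-distributive}.2 (ultimately Lemma~\ref{lem:nonlinear}), and non-commutativity for $s\geq 2$ is Proposition~\ref{prop_non-distributive}.3; together with items~1 and~2 above, this yields the ring-without-distributivity statement.

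The heart of the argument is item~4, which requires (i) closure of $\mathfrak{G}^s(R)$ under $\boxtimes$, (ii) that the unit $e$ of item~2 lies in $\mathfrak{G}^s(R)$ (which is clear since $1_R\in R^{\times}$), and (iii) existence of $\boxtimes$-inverses. For closure and for inverses, the guiding identity is that, for any $n\geq 1$ and $w$ with $|w|=n$, the two extremal partitions $1_n$ and $0_n$ in $\operatorname{NC}(n)$ are Kreweras-complementary, so that
\[
X_w(f\boxtimes g) \;=\; f_w\prod_{j=1}^n g_{(w_j)} \;+\; g_w\prod_{j=1}^n f_{(w_j)} \;+\;\!\!\sum_{\pi\neq 0_n,1_n}\!\! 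X_{w,\pi}(f)\cdot_R X_{w,K(\pi)}(g),
\]
where every summand in the final sum depends only on coefficients $f_u,g_u$ with $|u|<n$. For closure, the case $n=1$ gives $(f\boxtimes g)_{(i)}=f_{(i)}g_{(i)}\in R^{\times}$, while higher $n$ are unconstrained; so $\mathfrak{G}^s(R)$ is stable under $\boxtimes$. For inverses, I solve $f\boxtimes g = e$ recursively on $|w|$: at $|w|=1$ one needs $g_{(i)}=f_{(i)}^{-1}$, which is legitimate precisely because $f_{(i)}\in R^{\times}$; for $|w|=n\geq 2$, the displayed identity, combined with $X_w(f\boxtimes g)=0$, becomes
\[
g_w\prod_{j=1}^n f_{(w_j)} \;=\; -\,f_w\prod_{j=1}^n g_{(w_j)} \;-\!\!\sum_{\pi\neq 0_n,1_n}\!\! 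X_{w,\pi}(f)\cdot_R X_{w,K(\pi)}(g),
\]
and the right-hand side involves only the already-constructed $g_u$ with $|u|<n$. Since $\prod_j f_{(w_j)}\in R^{\times}$, one divides to define $g_w$. A symmetric computation yields a left inverse, and associativity from item~2 then forces the two to coincide. Finally, commutativity for $s=1$ follows from Proposition~\ref{prop_non-distributive}.3 once one observes $\mathfrak{G}^1(R)\subset\mathfrak{M}^1(R)$; non-abelianness for $s\geq 2$ follows by specialising the explicit counterexample in that proposition (words of the form $ab\cdots$) to elements of $\mathfrak{G}^s(R)$ by choosing length-one coefficients $1_R$.

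The expected obstacle is bookkeeping in the inverse construction: one must confirm that $1_n$ and $0_n$ are the only partitions contributing a factor $f_w$ or $g_w$ (so that the recursion really decouples), which is exactly where the relation $|\pi|+|K(\pi)|=n+1$ and the fact that $K$ swaps $1_n\leftrightarrow 0_n$ are essential. Once this is in hand, the rest is a clean induction.
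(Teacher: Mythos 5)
Your proof is correct and follows essentially the same route as the paper: items 1--3 are reduced to Proposition~\ref{prop_non-distributive} and the unit computation via the extremal partitions $0_{|w|}$, $1_{|w|}$, exactly as in the text. For item 4 you construct the inverse by the coefficient-by-coefficient recursion isolating the $\pi=0_n$ term and dividing by $\prod_j f_{(w_j)}\in R^{\times}$, which is precisely the recursion the paper packages as the antipode formula in Proposition~\ref{co-inv-boxed}; your explicit remarks on closure under $\boxtimes$ and on why only $0_n$ and $1_n$ contribute top-degree coefficients are welcome details the paper leaves implicit.
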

\begin{rem}
In order to simplify notation, we shall also use $f_w:=X_w(f)$, $f_{w,\pi}:=X_{w,\pi}(f)$ and $f_{w,K(\pi)}:=X_{w,K(\pi)}(f)$.
\end{rem}
\begin{proof}

1. This follows from the general properties of arbitrary power series with coefficients in a commutative ring.   

2. Associativity was shown in Proposition~\ref{prop_non-distributive}, Point 1. 
First we show that $1_{\mathfrak{M}^s(R)}$ is a unit. 
For $|w|\geq1$, it follows form~(\ref{boxed_convolution}), that  
$(1_{\mathfrak{M}^s(R)}\boxtimes f)_w=(1_{\mathfrak{M}^s(R)})_{w,0_{|w|}}\cdot f_{w,1_{|w|}}=f_w$, with $(1_{\mathfrak{M}^s(R)})_{w,0_{|w|}}=1_R$, and $0_{|w|}$ being the Kreweras complement of $1_{|w|}$; the other contributions are zero. Similarly, for $(f\boxtimes 1_{\mathfrak{M}^s(R)})_w=f_{w,1_{|w|}}\cdot (1_{\mathfrak{M}^s(R)})_{w,0_{|w|}}=f_w$, and all other contributions being again zero. Uniqueness is then proved as usual. 

3. This follows directly from Proposition~\ref{prop_non-distributive} and the second statement in the present proposition. 

4. The form of the unit is clear. The property of being abelian or non-abelian, respectively, was already shown in Proposition~\ref{prop_non-distributive}, point~$3.$ It remains to show the existence of an inverse, which will be done in Proposition~\ref{co-inv-boxed}.
\end{proof}

For every $s\geq1$, the family
$((\mathfrak{G}^s)_{i}, \pr_{ij})$, indexed by the directed set $(\N,\leq)$, is an inverse system of functors, with the natural restriction homomorphisms 
$$
\pr_{ij}: (\mathfrak{G}^s)_{j}\rightarrow(\mathfrak{G}^s)_{i},\quad i\leq j, 
$$ 
satisfying  
\begin{itemize}
\item $\pr_{ii}=\id_{\mathfrak{G}^s_i}$,  $i\in\N$, and 
\item $\pr_{ik}=\pr_{ij}\circ\pr_{jk}$, $i\leq j\leq k$.
\end{itemize}

\begin{df}
\label{def:n-torus}
An algebraic group scheme is called an {$n$-\bf torus} if it is isomorphic to $\mathbb{G}^n_m\cong \underbrace{\mathbb{G}_m\times\cdots\times \mathbb{G}_m}_{n-\text{times}}$ for some
$n\in\N$. 
\end{df} 
So, for $R\in\mathbf{cAlg}_k$, we have $\mathbb{G}^n_m(R)\cong\prod_{i=1}^n R^{\times}$, with component-wise multiplication. 

\begin{prop}
\label{basic_properties}
For all $s\in\N^{\times}$, $R\in\mathbf{cAlg}_k$, and the binary operation $\boxtimes$, the following holds:
\begin{enumerate}
\item $\mathfrak{G}^s(R)$ is a group, which is the {\bf semi-direct product} of the {\bf normal subgroup} $\mathfrak{G}^s_+(R)$ and the {\bf $s$-torus}
$(\mathfrak{G}^s)_1(R)\cong\mathbb{G}_m^s(R)$, i.e.
$$
\mathfrak{G}^s(R)=\mathfrak{G}^s_1(R)\ltimes (\mathfrak{G}^s_+)(R).
$$
Equivalently, the exact sequence
\[
\begin{xy}
  \xymatrix{
                             0_R\ar[r]&\mathbb{G}_m^s(R)\ar[r]^{\iota} &\mathfrak{G}^s(R)\ar[r]^p &\mathfrak{G}^s_+(R)\ar[r] &1_R~.
               }
\end{xy}
\]
holds.
\item $\mathfrak{G^s}(R)$ and $\mathfrak{G}^s_+(R)$ are groups, filtered in ascending order by the subgroups $(\mathfrak{G}^s)_n(R)$ and $(\mathfrak{G}^s_+)_n(R)$, respectively, i.e. 
$$
\forall n: (\mathfrak{G}^s)_n(R)\subsetneq  (\mathfrak{G})_{n+1}(R) \qquad\text{and}\quad \bigcup_{n\in\N^{\times}} (\mathfrak{G}^s)_n(R)=\mathfrak{G}^s(R),
$$
and similarly for $(\mathfrak{G}^s_+)_n(R)$.
\item $\mathfrak{G}^s(R)$ and $\mathfrak{G}^s_+(R)$ are {\bf pro-groups} (projective limits of finite-dimensional groups), i.e. 
$$
\mathfrak{G}^s(R)=\varprojlim_{n\in\N} (\mathfrak{G}^s)_n(R)\qquad\text{and}\quad\mathfrak{G}^s_+(R)=\varprojlim_{n\in\N} (\mathfrak{G}^s_+)_n(R)~.
$$
\end{enumerate}
\end{prop}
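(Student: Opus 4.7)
I argue claim by claim, invoking throughout the \emph{locality} of $\boxtimes$ recorded after Definition~\ref{boxconv_df}: the coefficient $(f\boxtimes g)_w$ depends only on $f_{w'},g_{w'}$ with $|w'|\le|w|$. For Claim~1, $\operatorname{NC}(1)$ is a singleton equal to its own Kreweras complement, so specialising~(\ref{boxed_convolution}) to $w=(i)$ gives $(f\boxtimes g)_i=f_i\cdot_R g_i$. Hence the degree-one projection $p\colon\mathfrak{G}^s(R)\to\mathbb{G}_m^s(R)$, $f\mapsto(f_1,\dots,f_s)$, is a surjective group homomorphism whose kernel is $\mathfrak{G}^s_+(R)$, and normality of the latter is automatic. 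A section is furnished by $\iota\colon(r_1,\dots,r_s)\mapsto r_1z_1+\cdots+r_sz_s$, which is itself a homomorphism because for $|w|\ge 2$ a non-vanishing summand in $\iota(\underline r)\boxtimes\iota(\underline s)$ would force $\pi=0_{|w|}=K(\pi)$ simultaneously, incompatible with the identity $|\pi|+|K(\pi)|=|w|+1$ from~(\ref{part-Krew_rel}). The splitting delivers the semi-direct product and the unique factorisation $f=\iota(p(f))\boxtimes(\iota(p(f))^{-1}\boxtimes f)$.

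For Claim~2, the strict inclusions $(\mathfrak{G}^s)_n(R)\subsetneq(\mathfrak{G}^s)_{n+1}(R)$ are immediate from the embeddings $\iota_n$, since words of length $n+1$ introduce genuinely new coordinates. Locality endows each truncation with a group structure under the $n$-truncated boxed convolution, inverses being built coefficient-by-coefficient by induction on degree, starting from the $s$-torus case at level one; the same argument applies to $(\mathfrak{G}^s_+)_n(R)$ with degree-one coefficients frozen at $1_R$. The union equality is to be read in the projective-limit sense of Claim~3: every element of $\mathfrak{G}^s(R)$ is determined by, and reconstructible from, its compatible family of truncations.

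For Claim~3, the inverse system $\bigl((\mathfrak{G}^s)_n(R),\pr_{ij}\bigr)$ noted before Definition~\ref{def:n-torus} consists of groups and group homomorphisms, again by locality. The natural assignment $f\mapsto(\pr_n(f))_n$ is then a group isomorphism $\mathfrak{G}^s(R)\xrightarrow{\cong}\varprojlim_n(\mathfrak{G}^s)_n(R)$: its inverse assembles a compatible family $(f^{(n)})_n$ by setting $f_w:=X_w(f^{(n)})$ for any $n\ge|w|$, well-defined by compatibility, with invertibility of the degree-one coefficients inherited from $f^{(1)}\in(\mathfrak{G}^s)_1(R)\cong\mathbb{G}_m^s(R)$. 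The argument for $\mathfrak{G}^s_+(R)$ is identical. The principal obstacle is the existence of $\boxtimes$-inverses in $\mathfrak{G}^s(R)$, which Proposition~\ref{Group_prop} defers to Proposition~\ref{co-inv-boxed} and which is here taken as granted; modulo that, everything reduces to the locality of $\boxtimes$ and routine coefficient-by-coefficient bookkeeping.
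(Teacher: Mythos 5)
Your proof is correct, and for the only part the paper actually argues (Claim~1; it declares Claims~2 and~3 ``clear'') you take a mildly different but equivalent route. The paper establishes the internal semi-direct product by hand: it computes the conjugate $g\boxtimes h\boxtimes g^{-1}$ and observes that its degree-one coefficients are $r_i\cdot r_i^{-1}=1$, shows $\mathfrak{G}^s_1(R)\cap\mathfrak{G}^s_+(R)=\{1\}$, and exhibits the factorisation $f=\iota_1(\pr_1(f))\boxtimes\bigl(\iota_1(\pr_1(f))^{-1}\boxtimes f\bigr)$. You instead package the same key fact --- that $\boxtimes$ acts on degree-one coefficients by componentwise multiplication, since $\operatorname{NC}(1)$ is a self-complementary singleton --- into the statement that $p\colon f\mapsto(f_1,\dots,f_s)$ is a surjective homomorphism with kernel $\mathfrak{G}^s_+(R)$, which makes normality automatic and is arguably cleaner than the paper's conjugation computation (which only tracks the degree-one entries and leaves the higher coefficients as unspecified $\tilde h_w$). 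Your verification that $\iota$ is a section homomorphism via the Kreweras relation $|\pi|+|K(\pi)|=|w|+1$ is a detail the paper leaves implicit, and it is correct. On Claims~2 and~3 you supply content the paper omits, and you are right to flag the two fine points: the truncations are closed only under the \emph{truncated} convolution (for $|w|=n+1$ a partition such as $((1),(2,\dots,n+1))$ paired with its complement can produce nonzero coefficients from degree-$\le n$ data, so they are quotients via $\pr_n$ rather than literal $\boxtimes$-subgroups), and the union statement holds only in the pro-limit sense. Deferring the existence of inverses to Proposition~\ref{co-inv-boxed} matches what the paper itself does in Proposition~\ref{Group_prop}.
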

\begin{proof}
The second and the third statement are clear. It remains to show the first one.
Let $g\in\mathfrak{G}(R)$ and $h\in\mathfrak{G}_+(R)$. As vectors they are of the form $g=(r_1,\dots, r_s,\dots)$, $r_i\in R^{\times}$, $i=1,\dots, s$, and $h=(1,\dots,1,h_w,\dots)$ with $h_w$ arbitrary for $|w|\geq2$. Then for $g\boxtimes h\boxtimes g^{-1}$, according to~(\ref{mult_box_string}), (\ref{vect_inv}) and associativity, we have
\begin{eqnarray*}
(r_1,\dots, r_s,\dots)\boxtimes(1,\dots,1,h_w,\dots)\boxtimes(r_1^{-1},\dots, r_s^{-1},\dots)&=&(r_1\cdot r_1^{-1},\dots, r_s\cdot r_s^{-1},\dots)\\
&=&(1,\dots,1,\tilde{h}_w,\dots)
\end{eqnarray*}
with $\tilde{h}_w\in R$ for $|w|\geq2$, which shows normality. 

As $\mathfrak{G}_1^s(R)=\{(r_1,\dots, r_s):\text{$r_i\in R^{\times}$ for $i\in[s]$} \}$, the  boxed convolution becomes component-wise multiplication, and so the isomorphism with an $s$-dimensional torus, is clear from Definition~\ref{def:n-torus}. 

It follows from the definitions that $\mathfrak{G}^s_1(R)\cap\mathfrak{G}^s_+(R)=\{1_{\mathfrak{G}^s(R)}\}$. For an arbitrary $f\in\mathfrak{G}^s(R)$, consider $\pr_1(f)\in\mathfrak{G}^s_1(R)$ and define $\iota_1((\pr_1(f))^{-1})\boxtimes f\in\mathfrak{G}^s_+(R)$. We obtain the factorisation
$$
f=\iota_1(\pr_1(f))\boxtimes (\iota_1((\pr_1(f))^{-1})\boxtimes f),
$$
i.e. $\mathfrak{G}^s=\mathfrak{G}^s_1\boxtimes\mathfrak{G}^s_+$, which proves the claim.
\end{proof}

In the case of moments we have 
\begin{prop} 
\label{Prop:mom-groups}
For every $s\in\N^{\times}$ and $R\in\mathbf{cAlg}_k$, the following holds:
\begin{enumerate}
\item $(\mathfrak{M}^s(R),\boxplus_V)$ is an {\em abelian} group with neutral element $\underline{0}=(0,0,0,0,\dots)$. 
\item $(\mathfrak{G}^s(R),\boxtimes_V)$ is a group with neutral element $\underline{1}=(1,1,1,1,\dots)=\operatorname{Zeta}_s$. For $s=1$, it is {\em abelian} and for $s\geq2$, {\em non-abelian}. 
\item The $\boxtimes$-multiplication from the right with $\operatorname{Zeta}_s$ induces  a natural isomorphism of groups, such that
$$
(\mathfrak{M}^s(R),+)\cong(\mathfrak{M}^s(R),\boxplus_V)\quad\text{and}\quad (\mathfrak{G}^s(R),\boxtimes)\cong(\mathfrak{G}^s(R),\boxtimes_V).
$$
\end{enumerate}
\end{prop}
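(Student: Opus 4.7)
The plan is to define, for each $R \in \mathbf{cAlg}_k$, the right-translation map
\[ \Phi_R: \mathfrak{M}^s(R) \to \mathfrak{M}^s(R), \qquad f \mapsto f \boxtimes \operatorname{Zeta}_s, \]
and show it is a natural bijection that transports the familiar structures $(+, \boxtimes)$ of Proposition~\ref{Group_prop} onto the target structures $(\boxplus_V, \boxtimes_V)$. Once $\Phi_R$ is established as an isomorphism, statements~1 and~2 follow immediately by transport of structure from Proposition~\ref{Group_prop}(1) and from Propositions~\ref{Group_prop}(4), \ref{basic_properties}.

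The first step is bijectivity. Since $\operatorname{Moeb}_s \in \mathfrak{G}^s(R)$ is the $\boxtimes$-inverse of $\operatorname{Zeta}_s$ in the group $\mathfrak{G}^s(R)$ (Definition~\ref{Moebius}), and since $\boxtimes$ is associative on the entire monoid $\mathfrak{M}^s(R)$ (Proposition~\ref{Group_prop}(2)), right translation by $\operatorname{Moeb}_s$ supplies a two-sided inverse to $\Phi_R$ on the full monoid. Moreover, single-letter coefficients multiply component-wise under $\boxtimes$, so since all linear coefficients of $\operatorname{Zeta}_s$ equal $1_R \in R^\times$, $\Phi_R$ restricts to a bijection of $\mathfrak{G}^s(R)$.

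The second step is to verify the homomorphism properties $\Phi_R(f + g) = \Phi_R(f) \boxplus_V \Phi_R(g)$ and $\Phi_R(f \boxtimes g) = \Phi_R(f) \boxtimes_V \Phi_R(g)$. Both reduce to one-line manipulations combining associativity, the relation $\operatorname{Zeta}_s \boxtimes \operatorname{Moeb}_s = 1_{\mathfrak{M}^s(R)}$, and the explicit formulas for $\boxplus_V$ and $\boxtimes_V$ provided by Proposition~\ref{V_multi_oper}. Naturality in $R$ is automatic, since $\boxtimes$, $\operatorname{Zeta}_s$ and $\operatorname{Moeb}_s$ are all defined by universal polynomials with integer coefficients and hence commute with push-forward along any $\varphi \in \Hom_{\mathbf{cAlg}_k}(R, R')$.

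Transporting the group structures through $\Phi_R$ then yields statements 1 and 2: $(\mathfrak{M}^s(R), \boxplus_V)$ inherits the abelian group structure of $(\mathfrak{M}^s(R), +)$ with identity $\Phi_R(0) = 0$, while $(\mathfrak{G}^s(R), \boxtimes_V)$ inherits that of $(\mathfrak{G}^s(R), \boxtimes)$ with identity $\Phi_R(1_{\mathfrak{G}^s(R)}) = \operatorname{Zeta}_s$. The (non-)commutativity dichotomy for $s = 1$ versus $s \geq 2$ transfers directly from Proposition~\ref{prop_non-distributive}(3). The main conceptual subtlety to guard against is that $\mathfrak{M}^s(R)$ is only a monoid, not a group, under $\boxtimes$: bijectivity of $\Phi_R$ on the full monoid cannot be argued by cancellation, and instead rests on the fact that $\operatorname{Zeta}_s$ admits a $\boxtimes$-inverse inside the subgroup $\mathfrak{G}^s(R)$ which, by associativity, still acts as a two-sided inverse of right translation on all of $\mathfrak{M}^s(R)$.
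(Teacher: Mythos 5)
Your proposal is correct and follows essentially the same route as the paper, whose proof simply cites Propositions~\ref{Mom_Cumul} and~\ref{V_multi_oper}: you make explicit that right translation by $\operatorname{Zeta}_s$ is a bijection (inverted by right translation by $\operatorname{Moeb}_s$, using associativity of $\boxtimes$ on the full monoid) and that the formulas $\boxplus_V=(\bullet_1\boxtimes\operatorname{Moeb}_s+\bullet_2\boxtimes\operatorname{Moeb}_s)\boxtimes\operatorname{Zeta}_s$ and $\boxtimes_V=\bullet_1\boxtimes\operatorname{Moeb}_s\boxtimes\bullet_2$ make it intertwine the operations. This is just a more detailed write-up of the paper's argument, with the correct care taken about $\mathfrak{M}^s(R)$ being only a monoid under $\boxtimes$.
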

\begin{proof}
This is a consequence of Propositions~\ref{Mom_Cumul} and \ref{V_multi_oper}.
\end{proof}
\begin{df}
Let $G$ be an algebraic group with identity $e_G$. A finite sequence of subgroups $(G_i)_{i=0}^n$, which starts with $G$ and ends with $e_G$, i.e.
$$
G=G_0\supset G_1\supset\dots\supset G_n=\{e_G\},
$$
is called a {\bf subnormal series} if each subgroup is normal in the preceding subgroup, i.e. $G_{i+1}\lhd G_i$, for all $i$.
It is called a {\bf normal series} if every $G_i$ is a normal subgroup of $G$, i.e. $G_{i}\lhd G$, for all $i$.
\end{df}
\begin{df}
\label{def:solvable}
An algebraic group $G$ is called {\bf solvable} if it admits a subnormal series 
$$
G\supset G_1\supset\dots\supset G_n=\{e_G\},
$$
such that for all $i=1,\dots, n-1$ the quotients $G_{i}/G_{i+1}$ are abelian groups. It is {\bf split solvable} if the factor groups are either $\mathbb{G}_a$ or $\mathbb{G}_m$.
\end{df}
\begin{rem} 
Split solvable algebraic groups are smooth and connected, cf. e.g.~\cite{W}.
\end{rem}
For fixed $s,n,j\in\N$, let $(\mathfrak{G}^s_+)_{n,j}(R)$ be the subset of power series with coefficients $a_w$, such that $a_w=0$, for $2\leq|w|\leq j\leq n$, and more generally, 
$(\mathfrak{G}^s_+)_{\infty,j}(R)$ those series for which $a_w=0$, for $2\leq|w|\leq j$.
\begin{prop}
\label{split_normal_group}
For $k$ a field, let $R\in\mathbf{cAlg}_k$ and $j=2,\dots, n$. The $(\mathfrak{G}^s_+)_{n,j}(R)$ are normal subgroups of $(\mathfrak{G}^s_+)_{n}(R)$. Set $G_0(R):=(\mathfrak{G}^s_+)_n(R)$ and $G_i(R):=(\mathfrak{G}^s_+)_{n,i+1}(R)$, for $i=1,\dots, n-1$. The $G_i(R)$ form a {\bf split} subnormal series, i.e. subsequent quotients satisfy 
\begin{equation}
\label{}
G_i(R)/G_{i+1}(R)\cong\mathbb{G}^{s^{i+1}}_a(R)\quad\text{for $i=0,\dots,n-1$}.
\end{equation}
\end{prop}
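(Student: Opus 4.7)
The plan is to reduce everything to a careful bookkeeping of non-crossing partitions in low degrees, exploiting the fact that a power series $f \in (\mathfrak{G}^s_+)_{n,j}(R)$ has extremely restricted coefficients: $X_{w}(f) = 1_R$ for $|w|=1$, $X_w(f) = 0$ for $2\le |w|\le j$, and $X_w(f)\in R$ arbitrary otherwise. Since by Definition~\ref{def:box-proj-op} the value $X_{w,\pi}(f)=\prod_{V\in\pi} X_{w|V}(f)$ is a product over blocks, each factor is either $1_R$ (from a singleton block) or determined by a block-length-$|V|$ coefficient, the analysis of which partitions contribute reduces to a purely combinatorial constraint on admissible block sizes.

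First, I would prove that $(\mathfrak{G}^s_+)_{n,j}(R)$ is a subgroup. Fix a word $w$ with $2\le|w|=m\le j$ and $f,g\in(\mathfrak{G}^s_+)_{n,j}(R)$. By~(\ref{boxed_convolution}), $X_w(f\boxtimes g) = \sum_{\pi\in\operatorname{NC}(m)} X_{w,\pi}(f)\,X_{w,K(\pi)}(g)$. For $X_{w,\pi}(f)\ne 0$ every block $V\in\pi$ must have $|V|=1$ or $|V|>j$; since $|V|\le m\le j$, only $|V|=1$ is possible, forcing $\pi=0_m$. The same argument applied to $g$ forces $K(\pi)=0_m$, i.e.\ $\pi=1_m$. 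Since $0_m\ne 1_m$ for $m\ge 2$, every summand vanishes and $X_w(f\boxtimes g)=0$. Closure under $\boxtimes$-inverse follows by the recursive computation~(\ref{vect_inv}) of the inverse coefficients degree by degree: vanishing of $f$ in degrees $2,\dots,j$ propagates to its inverse.

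Second, I would establish normality. Given $f\in(\mathfrak{G}^s_+)_n(R)$ and $h\in(\mathfrak{G}^s_+)_{n,j}(R)$, I must show $X_w(f\boxtimes h\boxtimes f^{-1})=0$ for $2\le|w|=m\le j$. The previous partition argument applied to $f\boxtimes h$ shows that in every degree $\le j$ only $\pi=1_m$ contributes (since $h$ forces $K(\pi)=0_m$), giving $X_w(f\boxtimes h) = X_w(f)$. Consequently, for a block $V$ of any $\pi\in\operatorname{NC}(m)$ with $|V|\le m\le j$, one has $X_{w|V}(f\boxtimes h) = X_{w|V}(f)$, so $X_{w,\pi}(f\boxtimes h) = X_{w,\pi}(f)$ in the full sum, and therefore $X_w\bigl((f\boxtimes h)\boxtimes f^{-1}\bigr) = X_w(f\boxtimes f^{-1}) = X_w(1_{\mathfrak{G}^s(R)}) = 0$.

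Finally, I would identify the quotient. For $f,g\in G_i=(\mathfrak{G}^s_+)_{n,i+1}(R)$ and a word $w$ of length $m=i+2$, the admissible block sizes are either $1$ or $>i+1$, i.e.\ $\ge m$; hence the only non-vanishing partitions in $\sum_\pi X_{w,\pi}(f)X_{w,K(\pi)}(g)$ are $\pi=0_m$ (contributing $X_w(g)$ via $K(\pi)=1_m$) and $\pi=1_m$ (contributing $X_w(f)$). Therefore $X_w(f\boxtimes g)=X_w(f)+X_w(g)$ in degree $m$, so passage to the quotient by $G_{i+1}$ strips away all higher-degree information and leaves an abelian group whose elements are indexed by the $s^{i+2}$ words of length $i+2$, each coefficient in $R$; this is $\mathbb{G}_a^{s^{i+2}}(R)$ (up to the indexing convention in the statement), and in particular splits. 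The principal obstacle throughout is ensuring the partition-size dichotomy ($|V|=1$ or $|V|>j$) is applied consistently to both $\pi$ and $K(\pi)$; once that is organised, normality and the computation of the quotients reduce to the same combinatorial template, and connectedness/smoothness of $\mathbb{G}_a$ gives the \emph{split} character of the series.
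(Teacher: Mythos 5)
Your proof is correct and follows the same basic strategy as the paper's: everything is reduced to the block-size dichotomy for $X_{w,\pi}$ (each block contributes a factor $1_R$ if it is a singleton and $0$ if its size lies in $\{2,\dots,j\}$), which kills all partitions except $0_m$ and $1_m$ in low degree. Your normality argument is the paper's argument: $(f\boxtimes h)_w=f_w$ for $2\le|w|\le j$, and since coefficients of degree $\le j$ of a boxed product depend only on coefficients of degree $\le j$ of the factors, conjugation reduces to $f\boxtimes f^{-1}$. Where you genuinely diverge is the identification of the quotient: the paper constructs explicit coset representatives by recursively solving $f=h\boxtimes\tilde f$ with $\tilde f$ supported in the lowest nontrivial degree, whereas you observe directly that $X_w(f\boxtimes g)=X_w(f)+X_w(g)$ in degree $m=i+2$, so coefficient extraction in that degree is a surjective homomorphism onto a vector group with kernel $G_{i+1}(R)$. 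Your version is shorter and avoids the induction; you also explicitly check closure of $(\mathfrak{G}^s_+)_{n,j}(R)$ under $\boxtimes$ and under inversion, which the paper omits (your citation should be to the antipode recursion of Proposition~\ref{co-inv-boxed} rather than to~(\ref{vect_inv}), but the argument is the intended one). Finally, you are right to flag the exponent: with the definitions as given, $G_i(R)/G_{i+1}(R)$ is parametrised by the $s^{i+2}$ coefficients in degree $i+2$, which agrees with the paper's own internal computation ($s^{j+1}$ with $j=i+1$) but not with the $s^{i+1}$ printed in the statement; the statement's exponent is off by one.
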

\begin{proof}
Let us show normality first. 
For any $f\in\mathfrak{G}^s(R)$, we have
$$
(f\boxtimes f^{-1})_w=
\begin{cases}
      & 1_k\quad \text{for $|w|=1$ }, \\
      & 0_k\quad \text{for $|w|\geq 2$}.
\end{cases}
$$
Let $h\in(\mathfrak{G}^s_+)_{n,j}(R)$ and $f\in(\mathfrak{G}^s_+)_{n}(R)$. Then by~(\ref{boxed_convolution}), for
$2\leq|w|\leq j$,  we have 
$$
(f\boxtimes h)_w=f_w\cdot 1+1\cdot\underbrace{h_w}_{=0}+\sum_{\substack{\pi\in\operatorname{NC}(|w|)\\  \pi\neq0_{|w|},1_{|w|}}} f_{w,\pi}\cdot \underbrace{h_{w,K(\pi)}}_{=0}=f_w,
$$
as the coefficients in the last sum always involve words $w'$ of length $2\leq|w'|\leq j-1$, and hence they are zero. 
Thererfore $((f\boxtimes h)\boxtimes f^{-1})_w=0$ for $1<|w|\leq j$ and $1$ for $|w|=1$. This proves normality.

Next we shall determine the cosets.  Let $f,g\in(\mathfrak{G}^s_+)_{n,j}(R)$. For $|w|=j+1$ we have $(f\boxtimes g)_w=f_w+g_w=(g\boxtimes f)_w$, i.e. the corresponding coefficients are simply added.

Let $\tilde{f}\in(\mathfrak{G}^s_+)_{n,j}(R)$ be of the form: $\tilde{f}_w$ is arbitrary for $|w|=j+1$ and $0$ for $|w|\geq j+2$. The dimension of the space of these power series is $s^{j+1}$.

Now we look at the equivalence class of such an $\tilde{f}$. Chose $f\in(\mathfrak{G}^s)_{n,j}$ such that $f_w=\tilde{f}_w$ for $|w|=j+1$ but otherwise arbitrary for all $|w|\geq j+2$. We show that we can uniquely solve the equation 
$$
f_w=(h\boxtimes \tilde{f})_w\quad\text{for $|w|\geq j+2$}, 
$$
i.e. we can find an $h\in(\mathfrak{G}^s_+)_{n,j+1}(R)$ which satisfies the above identities. For $|w|=j+1$ we have $h_w=0$, and for $|w|=j+2$ we have 
$$
f_w=(h\boxtimes\tilde{f})_w=h_w+\underbrace{\tilde{f}_w}_{=0}+\sum_{\substack{\pi\in\operatorname{NC}(|w|)\\  \pi\neq0_{|w|},1_{|w|}}} \underbrace{h_{w,\pi}\cdot\tilde{f}_{w,K(\pi)}}_{=0}=h_w,
$$
as the summands in the right hand sum are zero as $h_{w,\pi}$ is, and the other zero contribution comes from the assumption on $\tilde{f}$. Therefore, $h_w=f_w$ for $|w|=j+2$. Next we proceed inductively, by looking at words of length $|w|+1$. By abuse of notion we write $w+1$.
$$
f_{w+1}=(h\boxtimes\tilde{f})_{w+1}=h_{w+1}+\underbrace{\tilde{f}_{w+1}}_{=0}+\sum_{\substack{\pi\in\operatorname{NC}(|w|+1)\\  \pi\neq0_{|w|+1},1_{|w|+1}}} h_{w+1,\pi}\cdot\tilde{f}_{w+1,K(\pi)},
$$
but, by induction the expressions involved in $h_{w+1,\pi}\cdot\tilde{f}_{w+1,K(\pi)}$ are already determined, and so
$$
h_{w+1}=f_{w+1}-\sum_{\substack{\pi\in\operatorname{NC}(|w|+1)\\  \pi\neq0_{|w|+1},1_{|w|+1}}} h_{w+1,\pi}\cdot\tilde{f}_{w+1,K(\pi)}.
$$
Therefore, the factor group $G_i(R)/G_{i+1}(R)$, $i=0,\dots,n-1$, is isomorphic to the $s^{i+1}$-dimensional additive group $\mathbb{G}_a(R)$.
\end{proof}
\begin{cor}
\label{solvable}
For $k$ a field, $R\in\mathbf{cAlg}_k$ and $s,n\in\N$, the groups $\mathfrak{G}^s_n(R)$ are split solvable.
\end{cor}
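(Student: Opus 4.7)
The plan is to assemble a split subnormal series for $\mathfrak{G}^s_n(R)$ by stacking three pieces: a torus piece at the top, the series from Proposition~\ref{split_normal_group} in the middle, and a routine refinement of each additive factor into one-dimensional pieces at the bottom. Truncating the decomposition in Proposition~\ref{basic_properties}(1) to level $n$ yields
$$
\mathfrak{G}^s_n(R) = (\mathfrak{G}^s)_1(R) \ltimes (\mathfrak{G}^s_+)_n(R),
$$
so $(\mathfrak{G}^s_+)_n(R)$ is a normal subgroup of $\mathfrak{G}^s_n(R)$, and the quotient is the $s$-torus $\mathbb{G}_m^s$.

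Next, I would run through the standard split chain for the torus,
$$
\mathbb{G}_m^s \supset \mathbb{G}_m^{s-1} \supset \cdots \supset \mathbb{G}_m \supset \{1\},
$$
which has successive quotients all isomorphic to $\mathbb{G}_m$; pulling it back along the projection $\mathfrak{G}^s_n(R) \to \mathbb{G}_m^s$ produces a chain from $\mathfrak{G}^s_n(R)$ down to $(\mathfrak{G}^s_+)_n(R)$ whose successive factors are again copies of $\mathbb{G}_m$. Since each $\mathbb{G}_m^{s-i}$ is normal in the abelian group $\mathbb{G}_m^s$, the preimages are automatically normal in $\mathfrak{G}^s_n(R)$, ensuring the subnormal condition at this stage.

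Below $(\mathfrak{G}^s_+)_n(R)$, I would invoke Proposition~\ref{split_normal_group}, which supplies the subnormal series $(\mathfrak{G}^s_+)_n(R) = G_0(R) \supset G_1(R) \supset \cdots \supset G_{n-1}(R) \supset \{1\}$ with quotients $G_i(R)/G_{i+1}(R) \cong \mathbb{G}_a^{s^{i+1}}(R)$. Each $\mathbb{G}_a^{s^{i+1}}$ can be refined via the obvious chain
$$
\mathbb{G}_a^{s^{i+1}} \supset \mathbb{G}_a^{s^{i+1}-1} \supset \cdots \supset \mathbb{G}_a \supset \{0\},
$$
whose successive quotients are all $\mathbb{G}_a$; lifting this refinement back to $G_i(R)$ and concatenating with the preceding pieces produces one long subnormal series for $\mathfrak{G}^s_n(R)$ whose successive quotients are each isomorphic to either $\mathbb{G}_m$ or $\mathbb{G}_a$. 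By Definition~\ref{def:solvable}, this is exactly the definition of split solvable.

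The main, and essentially only, obstacle is the bookkeeping of normality: one has to check that at each stage the subgroup we introduce is normal in its immediate predecessor. For the torus layers this follows from commutativity of $\mathbb{G}_m^s$, for the middle layers it is precisely the content of Proposition~\ref{split_normal_group}, and for the additive refinement it follows from commutativity of $\mathbb{G}_a^m$ together with the fact that lifting a normal chain through a surjection preserves normality. None of these steps requires further combinatorial input, so the corollary reduces to the proposition already established.
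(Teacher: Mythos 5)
Your proof is correct and follows essentially the same route as the paper: the top factor $\mathfrak{G}^s_n(R)/(\mathfrak{G}^s_+)_n(R)\cong\mathbb{G}_m^s(R)$ comes from Proposition~\ref{basic_properties}, and the remaining subnormal series with additive quotients comes from Proposition~\ref{split_normal_group}. The only difference is that you additionally refine the factors $\mathbb{G}_m^s$ and $\mathbb{G}_a^{s^{i+1}}$ into one-dimensional $\mathbb{G}_m$ and $\mathbb{G}_a$ pieces, which the paper's proof omits but which is actually needed to match the literal wording of Definition~\ref{def:solvable}; this refinement is routine and your normality bookkeeping for it is sound.
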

\begin{proof}
Let $G(R):=(\mathfrak{G}^s)_n(R)$, and consider the sequence $G_1(R):=(\mathfrak{G}^s_+)_n(R)$ and $G_i(R):=(\mathfrak{G}^s_+)_{n,i+1}(R)$, $i\geq 1$. From Propositions~\ref{basic_properties} and~\ref{split_normal_group} we know that $G_i(R)$ defines a subnormal series with the quotients $G_i(R)/G_{i+1}(R)$ being additive groups for all $i\geq1$. 

It remains to look at $G(R)/G_1(R)$. By Proposition~\ref{basic_properties} we have $G(R)/G_1(R)=\mathbb{G}^s_m(R)$, as claimed.
\end{proof}
Alternatively, we can investigate the properties of the groups under consideration as follows. For $x,y$ being elements of a group $G$, let $[x,y]:=xyx^{-1}y^{-1}$ be the corresponding {\bf commutator}, and for two subgroups $A,B\subseteq G$, denote by $[A,B]$ the subgroup of $G$ generated by all $[a,b]$, $a\in A, b\in B$.
\begin{df}
Let $G$ be a group. The {\bf derived series} $(D^i(G))_{i\in\N}$ is the inductively defined sequence of iterated commutators:
$$
D^0(G):=G,\quad D^{i+1}(G):=[D^i(G),D^i(G)],\quad i\in\N.
$$
\end{df}
Equivalently, a group $G$ is solvable, cf.~Definition~\ref{def:solvable}, if the derived series terminates in the neutral element $e_G$.
It is known, cf. e.g.~\cite{Hum,W} that if $G$ is an algebraic group, then $D^1(G)=[G,G]$ is a {\em closed normal} subgroup of $G$, and {\em connected} if $G$ is. Further,  these properties hold true, as can be shown by induction, for all $D^i(G)$. 

Now, a direct calculation gives
\begin{equation}
D^1(\mathfrak{G}^s)=[\mathfrak{G}^s,\mathfrak{G}^s]=\mathfrak{G}^s_+,
\end{equation}
which yields another proof of the first statement in Proposition~\ref{basic_properties}.
\begin{exmpl}
\end{exmpl}
Consider the alphabet $\{a,b\}$, i.e. the situation for $s=2$. For $f,g\in \mathfrak{G}^s_+$, we have the infinite strings
$$
\left.\begin{array}{lcrrrrrrc}f & = & 1, & 1, & f_{aa}, & f_{ab}, & f_{ba}, & f_{bb}, & \cdots \\f^{-1} & = & 1, & 1, & -f_{aa}, & -f_{ab}, & -f_{ba}, & -f_{bb}, & \cdots \\
g & = & 1, & 1, & g_{aa}, & g_{ab}, & g_{ba}, & g_{bb}, & \cdots\\
g^{-1} & = & 1, & 1, & -g_{aa}, & -g_{ab}, & -g_{ba}, & -g_{bb}, & \cdots \end{array}\right.
$$
For the commutator $[f,g]=f\boxtimes g\boxtimes f^{-1}\boxtimes g^{-1}$, we obtain by noticing that $[f,g]_{ij}=f_{ij}+g_{ij}-f_{ij}-g_{ij}=0$, for $i,j\in\{a,b\}$:
$$
\left.\begin{array}{lcrrrrrrrc}[f,g] & = & 1, & 1, & 0, & 0, & 0, &0, & [f,g]_{aaa},\dots
\end{array}\right.
$$
i.e.
$$
[\mathfrak{G}^2_+,\mathfrak{G}^2_+]=(\mathfrak{G}^2_+)_{\infty,2},
$$
which also holds for $s\geq3$.
\subsection{The co-ordinate algebra}
All the previous functors are, as we have seen, group valued. More precisely, they define pro-algebraic groups. In this section we assume, for simplicity, $k$ to be a field.
\begin{prop}
\label{coordinate_rings}
For $n,s\in\N^{\times}$ let $N=N(n,s):=\frac{s^{n+1}-1}{s-1}=\varhash[s]^n_+$, be the number of non-empty words of length $\leq n$, over the alphabet $[s]$. We have
\begin{enumerate}
\item
$\mathfrak{M}^s_n(k)$ corresponds to the affine algebraic variety $\mathbb{A}^N(k)$, with co-ordinate ring $k[X_1,\dots, X_N]$.
\item $\mathfrak{G}^s_n(k)$ is isomorphic to a quasi affine variety, given as the cartesian product of principal open sets 
$$
D(X_1)\times\dots\times D(X_s)\times \mathbb{A}^{N-s}.
$$ 
It is isomorphic to the affine algebraic variety
$$
k^2/\langle X_1 Y-1\rangle\times\cdots\times k^2/\langle X_s Y-1\rangle\times k^{N-s},
$$ 
with co-ordinate ring isomorphic to
$$
k[X_1^{\pm1},\dots X_s^{\pm1}, X_{s+1},\dots, X_N].
$$
\item $(\mathfrak{G}_+^s)_n(k)$ is isomorphic to an affine variety in $\mathbb{A}^N(k)$, given as the zero set of the ideal $\langle X_1-1,\dots, X_s-1\rangle).$
Its co-ordinate ring is
$$
\frac{k[X_1,\dots,X_s,\dots, X_N]}{\langle X_1-1,\dots, X_s-1\rangle}\cong k[\bar{X}_w~|~2\leq|w|\leq n].
$$ 
\end{enumerate}
\end{prop}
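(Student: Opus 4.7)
The proof is essentially a bookkeeping argument that uses the Yoneda lemma together with the standard representing algebras of the basic group schemes $\mathbb{G}_a$ and $\mathbb{G}_m$. The plan is to verify each of the three claims in turn by exhibiting a natural transformation between the $k$-functor in question and the spectrum of the proposed co-ordinate algebra, and then appealing to Yoneda to conclude representability.

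First I would count: the number of non-empty words over $[s]$ of length at most $n$ is $s + s^2 + \cdots + s^n$, which combined with the empty letter gives the stated $N=N(n,s)$ (and in any case equals $\#([s]^*_+)_n$). Enumerate these words as $w_1,\dots,w_N$. For point~(1), the functor $(\mathfrak{M}^s)_n$ sends $R\in\mathbf{cAlg}_k$ to the set of formal expressions $\sum_{1\le|w|\le n}\alpha_w z_w$ with $\alpha_w\in R$, which is just $R^N$. The assignment
\[
\Hom_{\mathbf{cAlg}_k}(k[X_1,\dots,X_N],R)\longrightarrow (\mathfrak{M}^s)_n(R),\qquad \varphi\mapsto\sum_{j=1}^N \varphi(X_j)\,z_{w_j},
\]
is clearly a bijection natural in $R$, so by Yoneda the pair $(k[X_1,\dots,X_N],\sum_j X_j z_{w_j})$ represents $(\mathfrak{M}^s)_n$, and it is isomorphic to $\mathbb{A}^N(k)$ on $k$-points.

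For point~(2), I would order the words so that $w_1=(1),\dots,w_s=(s)$ correspond to the length-one coefficients. The condition $\alpha_{w_i}\in R^\times$ for $i=1,\dots,s$ together with $\alpha_w\in R$ arbitrary for $|w|\ge 2$ translates, via the identification $R^\times=\Hom_{\mathbf{cAlg}_k}(k[X,X^{-1}],R)$, into
\[
(\mathfrak{G}^s)_n(R)\cong \prod_{i=1}^s R^\times\times R^{N-s}\cong \Hom_{\mathbf{cAlg}_k}\bigl(k[X_1^{\pm1},\dots,X_s^{\pm1},X_{s+1},\dots,X_N],R\bigr),
\]
again naturally in $R$. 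Presenting $k[X_i^{\pm1}]$ as $k[X_i,Y_i]/\langle X_iY_i-1\rangle$ gives the affine realisation as the product $\prod_{i=1}^s \spec_k\!\bigl(k[X_i,Y_i]/(X_iY_i-1)\bigr)\times\mathbb{A}^{N-s}$. On $k$-points this is exactly the quasi-affine open $D(X_1)\times\cdots\times D(X_s)\times \mathbb{A}^{N-s}$ inside $\mathbb{A}^N(k)$.

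For point~(3), the subfunctor $(\mathfrak{G}_+^s)_n\subset(\mathfrak{M}^s)_n$ is cut out precisely by the equations $\alpha_{w_i}=1_R$ for $i=1,\dots,s$. Under the representing transformation of point~(1) this corresponds to the ideal $\langle X_1-1,\dots,X_s-1\rangle$, giving the closed immersion and the claimed co-ordinate ring
\[
k[X_1,\dots,X_N]/\langle X_1-1,\dots,X_s-1\rangle\cong k[\bar X_w\mid 2\le|w|\le n].
\]
The only potential pitfall is keeping the bookkeeping consistent between the word indexing and the enumeration $X_1,\dots,X_N$, and making sure the natural transformations are verified to be natural; neither is substantive, since all three functors are built from copies of $\mathbb{G}_a$ and $\mathbb{G}_m$ by taking products and closed/open subschemes cut out by polynomial equations with coefficients in $k$. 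No part of the argument uses the group structure $\boxtimes$ at all; that structure will later be encoded on these representing rings through the Hopf algebra co-product, but it is not needed for the identification of the underlying schemes.
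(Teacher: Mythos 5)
Your proof is correct and follows essentially the same route as the paper: both identify $(\mathfrak{M}^s)_n$, $(\mathfrak{G}^s)_n$ and $(\mathfrak{G}^s_+)_n$ with products of copies of $\mathbb{G}_a$ and $\mathbb{G}_m$ (presenting $k[X^{\pm1}]$ as $k[X,Y]/\langle XY-1\rangle$ for the invertible length-one coefficients, and imposing $X_i=1$ for the unipotent part), the only cosmetic difference being that you phrase the identification functorially via Yoneda while the paper argues directly on $k$-points and coordinate rings. Your side remark on the count $N$ is also apt, since the displayed formula $\frac{s^{n+1}-1}{s-1}$ counts the empty word as well, a harmless discrepancy that affects nothing in the argument.
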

\begin{proof}
We use the natural embeddings
$$
\mathfrak{G}^s_n(k)\hookrightarrow\mathbb{A}^N(k),
$$
etc., which are well defined up to permutations of the co-ordinates. 
\begin{enumerate}
\item The first claim is clear.
\item 
The principal open set $D(X)\subset \mathbb{A}^1$ is isomorphic to the algebraic variety $k[X,Y]/\langle XY-1\rangle$, from which the first part of the claim follows. For $k[X,Y]/\langle XY-1\rangle$, consider $k(X)$, the field of fractions of $k[X]$ and its subalgebra $k[X]_X$, which is the {\bf localisation} at $X$. It is isomorphic to the algebra of {\bf Laurent polynomials} $k[X,X^{-1}]$ in the commuting variables $X$ and $X^{-1}$, which satisfy $X X^{-1}=1$. The surjective $k$-algebra homomorphism $h:k[X,Y]\rightarrow k[X,X^{-1}]$, defined by
$$
X\mapsto X,\quad Y\mapsto X^{-1},
$$ induces the isomorphism 
$$
k[X,Y]/\underbrace{\langle XY-1\rangle}_{=\ker(h)}\rightarrow k[X,X^{-1}].
$$
From this we get, by using repeatedly $k[V_1\times V_2]\cong k[V_1]\otimes k[V_2]$ for $V_1\subset \mathbb{A}^m$ and $V_1\subset \mathbb{A}^n$ being algebraic sets,
$$
k[X_1,Y]/\langle X_1Y-1\rangle\times\cdots\times k[X_s,Y]/\langle X_sY-1\rangle\cong k[X_1,X_1^{-1},\dots, X_s,X_s^{-1}].
$$
\item In $k[X]/\langle X-1\rangle$, let $\bar{X}$ denote the equivalence class of $X$. We have $\bar{X}=\bar{1}$. For $k[X_w|1\leq|w|\leq n]$ we get by dividing by the  prime ideal $\langle X_1-1,\dots, X_s-1\rangle:$ 
$$
X_w\mapsto\begin{cases}
      & \bar{1}\qquad \text{if $|w|=1$ }, \\
      & \bar{X}_w\quad\text{otherwise},
\end{cases}
$$
which proves the last claim.
\end{enumerate}
\end{proof}
Let us first determine the Hopf algebra structure on the co-ordinate ring of $(\mathfrak{M}^s)_n(R)$, as discussed in Section~\ref{Sec:GSch}. It is enough to look at the generators $X_w$ and then extend it to the rest algebraically.

The $k$-algebra structure on $R[X_w:1\leq|w|\leq n]$ is given as follows: for $f,g\in(\mathfrak{M}^s)_n(R)$, we have:
\begin{itemize}
\item the unit:
$\eta:R\rightarrow R[X_w:1\leq|w|\leq n]$ with $R$ being mapped onto the polynomials of degree zero, and with $1_R$ as unit element,
\item $R$-linear combinations $(a X_w+bX_{w'})(f):= a X_w(f)+b X_{w'}(f)$, for $a,b\in R$, 
\item the commutative product $(X_w\cdot X_{w'})(f):=X_w(f)\cdot X_{w'}(f)$.
\end{itemize}
For the co-algebra structure we have: 
\begin{itemize}
\item co-product: $\Delta(X_w)=X_w\otimes 1+1\otimes X_w$, as $\Delta X_w(f,g):=X_w(f+g)$, 
\item co-unit: $\varepsilon(X_w)=0_R$, as $\varepsilon(X_w):=X_w(0_{(\mathfrak{M}^s)_n(R)})$,
\item co-inverse (antipode): $S(X_w)=-X_w$, as $S(X_w)(f):=X_w(-f)$.
\end{itemize}
\begin{rem}
The above co-product is {\em co-commutative} and furthermore all the elements $X_w$ are {\em primitive}.
\end{rem}

\begin{prop}
For all $s,n\in\N^{\times}$, $((\mathfrak{M}^s)_n,+)$ is naturally isomorphic to the additive group $\mathbb{G}^{N}_a$ in $N=N(n,s)=\varhash[s]^n_+$-variables. By taking inverse limits, we obtain: 
\[
\begin{xy}
  \xymatrix{
    ((\mathfrak{M}^s)_n,+) \ar[d]^{\varprojlim}\ar[r]^{\quad\sim}& \mathbb{G}^{N}_a\ar[d]^{\varprojlim}\\ (\mathfrak{M}^s,+)\ar[r]^{\quad\sim} & \mathbb{G}^{\infty}_a
               }
\end{xy}
\]

\end{prop}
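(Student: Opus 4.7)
The plan is to exhibit the natural isomorphism explicitly at the level of $k$-functors and then argue compatibility with the inverse limit. For each $R\in\mathbf{cAlg}_k$, define
\[
\Phi_R:(\mathfrak{M}^s)_n(R)\longrightarrow \mathbb{G}_a^N(R),\qquad \sum_{1\leq|w|\leq n}\alpha_w z_w\;\longmapsto\;(\alpha_w)_{1\leq|w|\leq n}.
\]
Bijectivity is immediate because a polynomial of word length at most $n$ is completely determined by its finite coefficient tuple indexed by $[s]^*_n\setminus\{\emptyset\}$, whose cardinality is $N=N(n,s)=\frac{s^{n+1}-1}{s-1}$. The group homomorphism property follows from the very definition of $+$ on $(\mathfrak{M}^s)_n(R)$ as component-wise addition of coefficients, which is transported by $\Phi_R$ to the component-wise additive group structure on $R^N=\mathbb{G}_a^N(R)$. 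Naturality in $R$ is read off from the definition of the functor $\mathfrak{M}^s$ on morphisms: for $\varphi\in\Hom_{\mathbf{cAlg}_k}(R,R')$, the map $\mathfrak{M}^s(\varphi)$ applies $\varphi$ coefficient-wise, which matches $\mathbb{G}_a^N(\varphi)=\varphi^{\times N}$, so the square $\Phi_{R'}\circ\mathfrak{M}^s(\varphi)=\mathbb{G}_a^N(\varphi)\circ\Phi_R$ commutes.

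From the Hopf algebra side this is consistent with the co-ordinate ring computation already carried out: by Proposition~\ref{coordinate_rings}, item~1, the representing algebra of $(\mathfrak{M}^s)_n$ is $k[X_w:1\leq|w|\leq n]$, i.e. the polynomial ring in $N$ primitive generators, and the co-product displayed immediately before this proposition, $\Delta(X_w)=X_w\otimes 1+1\otimes X_w$, together with $\varepsilon(X_w)=0$ and $S(X_w)=-X_w$, is precisely the $N$-fold tensor product of the Hopf algebra of $\mathbb{G}_a$ as recalled in the first example of Section~\ref{Sec:GSch}. Dualising via the Yoneda correspondence yields the claimed functorial isomorphism $(\mathfrak{M}^s)_n\simeq \mathbb{G}_a^N$.

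For the inverse-limit part, note that the natural truncations $\pr_{ij}:(\mathfrak{M}^s)_j\to(\mathfrak{M}^s)_i$ (for $i\leq j$) send a polynomial to its sub-polynomial of word length $\leq i$, which on the coefficient side is simply the projection $R^{N(j,s)}\to R^{N(i,s)}$ forgetting the coordinates $X_w$ with $|w|>i$. These are exactly the structural projections of the inverse system defining $\mathbb{G}_a^\infty$, so the family $\{\Phi_R^{(n)}\}_{n\in\N^\times}$ is compatible and induces an isomorphism $\varprojlim_n \Phi_R^{(n)}:\mathfrak{M}^s(R)\xrightarrow{\sim}\mathbb{G}_a^\infty(R)$ making the square in the statement commute. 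No step presents a genuine obstacle; the only minor care required is to keep track of which coordinate in $R^N$ corresponds to which word, which is handled once a total order on $[s]^*_n\setminus\{\emptyset\}$ (e.g.\ shortlex) is fixed.
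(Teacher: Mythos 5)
Your proof is correct and follows essentially the route the paper intends: the paper states this proposition without proof, treating it as an immediate consequence of the preceding computation that the co-ordinate ring of $((\mathfrak{M}^s)_n,+)$ is $k[X_w:1\leq|w|\leq n]$ with every generator primitive, i.e. the $N$-fold tensor power of the Hopf algebra of $\mathbb{G}_a$, and your argument spells out exactly this together with the (routine) explicit coefficient-tuple isomorphism and the compatibility of truncations with the structural projections of the inverse system. The only caveat, inherited from the paper rather than introduced by you, is that the quoted count $N=\frac{s^{n+1}-1}{s-1}$ includes the empty word, whereas the index set actually used is the non-empty words of length at most $n$.
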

Let us determine now the Hopf algebra structure on   
$$
k[X_1^{\pm1},\dots X_s^{\pm1}, X_{s+1},\dots, X_N]
$$ 
which by Proposition~\ref{coordinate_rings} is the co-ordinate algebra of $(\mathfrak{G}^s)_n(R)$. It is enough to look at the generators $X_w$, $1\leq |w|\leq n$, and to take additionally the relations for Laurent polynomials into account. In particular we have
$$
X_i^{-1}(f)=X_i(f)^{-1}\quad\text{for $i=1,\dots,s$}.
$$
So, the $k$-algebra structure is essentially the same as above, however, the co-algebra structure is different.

We have:
\begin{itemize}
\item for the co-product: $\Delta X_w(f,g):=X_w(f\boxtimes g)$, which according to~(\ref{boxed_convolution}) translates into
\begin{equation}
\label{co-product}
\Delta X_w=\sum_{\pi\in\operatorname{NC}(|w|)} X_{w,\pi}\otimes X_{w,K(\pi)}~,
\end{equation}
where $X_{w,\pi}$ and $X_{w,K(\pi)}$ are as in~Definition~\ref{def:box-proj-op}.

\item For the co-unit: $\varepsilon(X_w):=X_w(1_{(\mathfrak{G}^s)_n})$ we get, according to Proposition~\ref{Group_prop},
$$
\varepsilon(X_w)=\begin{cases}
      & 1\quad \text{for $w=(i), i\in\{1,\dots,s\}$}, \\
      &  0\quad \text{for $|w|\geq2$}.
\end{cases}
$$
\end{itemize}
This defines the structure of a {\bf bi-algebra} on $R[X_w| 1\leq|w|\leq n]$.  
\begin{rem}
\label{rem:group-like}
The elements $X_i$, $i\in\{1,\dots s\}$, are {\bf group-like}, i.e. they satisfy 
\begin{equation}
\label{grouplike}
\Delta(X_i)=X_i\otimes X_i\qquad\text{and}\qquad\varepsilon(X_i)=1~.
\end{equation}
\end{rem}

\begin{exmpl}
\end{exmpl}
Let us list the co-product for the co-ordinate functions $X_w$ corresponding to the words $w=(i_1),(i_1i_2), (i_1i_2i_3)$ and $(i_1i_2i_3i_4)$ with $i_j\in[s]$ for $j=1,2,3,4$. We have for 

$|w|=1$:
\begin{eqnarray*}
\Delta(X_{i_1}) & = & X_{i_1}\otimes X_{i_1}
\end{eqnarray*}
$|w|=2$:
\begin{eqnarray*}
\Delta(X_{i_1i_2}) & = & X_{i_1i_2}\otimes X_{i_1}X_{i_2}+X_{i_1}X_{i_2}\otimes X_{i_1i_2}
\end{eqnarray*}
$|w|=3$:
\begin{eqnarray*}
\Delta(X_{i_1i_2i_3}) & = & X_{i_1i_2i_3}\otimes X_{i_1}X_{i_2}X_{i_3}+X_{i_1}X_{i_2i_3}\otimes X_{i_1i_3}X_{i_2}\\
&& +X_{i_1i_2}X_{i_3}\otimes X_{i_1}X_{i_2i_3}+X_{i_1i_3}X_{i_2}\otimes X_{i_1i_2}X_{i_3}\\
&& +X_{i_1}X_{i_2}X_{i_3}\otimes X_{i_1i_2i_3}
\end{eqnarray*}
$|w|=4$:
\begin{eqnarray*}
\Delta(X_{i_1i_2i_3i_4}) &=& X_{i_1i_2i_3i_4}\otimes X_{i_1}X_{i_2}X_{i_3}X_{i_4}+
 X_{i_1}X_{i_2}X_{i_3}X_{i_4}\otimes X_{i_1i_2i_3i_4}\\
&& +X_{i_1}X_{i_2i_3i_4}\otimes X_{i_1i_4} X_{i_2}X_{i_3}+X_{i_1i_3i_4}X_{i_2}\otimes X_{i_1i_2} X_{i_3}X_{i_4}\\
&&+X_{i_1i_2i_4}X_{i_3}\otimes X_{i_1}X_{i_2i_3} X_{i_4}+X_{i_1i_2i_3}X_{i_4}\otimes X_{i_1}X_{i_2}X_{i_3i_4} \\
&&+X_{i_1i_2}X_{i_3i_4}\otimes X_{i_1} X_{i_3}X_{i_2i_4}+
X_{i_1i_4}X_{i_2i_3}\otimes X_{i_1i_3} X_{i_2}X_{i_4}\\
&&+X_{i_1}X_{i_2}X_{i_3i_4}\otimes X_{i_1i_2i_4} X_{i_3}+
X_{i_1}X_{i_2i_3}X_{i_4}\otimes X_{i_1i_3i_4} X_{i_2}\\
&&+X_{i_1i_2}X_{i_3}X_{i_4}\otimes X_{i_1} X_{i_2i_3i_4}+
X_{i_1}X_{i_2i_4}X_{i_3}\otimes X_{i_1i_4} X_{i_2i_3}\\
&&+X_{i_1i_4}X_{i_2}X_{i_3}\otimes X_{i_1i_2i_3} X_{i_4}+
X_{i_1i_3}X_{i_2}X_{i_3}\otimes X_{i_1i_2} X_{i_3i_4}
\end{eqnarray*}

\begin{exmpl}
\end{exmpl}
Let us check the relation $(\id\otimes \varepsilon)\Delta=\id=(\varepsilon\otimes\id)\Delta$, explicitly. For $|w|\geq2$ we have:
\begin{eqnarray*}
\varepsilon\otimes{\id}(\Delta(X_w))&=&\varepsilon\otimes{\id}\left(X_w\otimes X_{i_1}\cdots X_{i_{|w|}}+X_{i_1}\cdots X_{i_{|w|}}\otimes X_w+\sum_{\pi\ne 0,1} X_{w,\pi}\otimes X_{w,K(\pi)}\right)\\
& =& \underbrace{\varepsilon(X_w)}_{=0}\otimes(X_{i_1}\cdots X_{i_{|w|}})+\varepsilon(X_{i_1}\cdots X_{i_{|w|}})\otimes X_w+\sum_{\pi\ne 0,1}\underbrace{\varepsilon(X_{w,\pi})}_{=0}\otimes X_{w,K(\pi)}\\
&=&\underbrace{\varepsilon(X_{i_1})}_{=1}\cdots \underbrace{\varepsilon(X_{i_{|w|}})}_{=1}\otimes X_w\cong X_w=\id(X_w),
\end{eqnarray*}
and similarly for ${\id}\otimes\varepsilon(\Delta(X_w))$.

In order to have a Hopf algebra and not only a bi-algebra, we need a co-inverse, i.e. we require that 
\begin{equation}
\label{}
S(X_w)(f)=X_w(f^{-1}).
\end{equation} 
By using Definition~\ref{boxconv_df}, we can calculate it recursively. 
From Proposition~\ref{coordinate_rings} and the relation $S(X_i)(f)=X_i(f^{-1})=(X_i(f))^{-1}$, we get
\begin{equation}
\label{antipode_inverse}
S(X_i)=X^{-1}_i,
\end{equation}
which satisfies $X^{-1}_i X_i=1_R=X_i X_i^{-1}$.

For $|w|=2$, we obtain by a direct calculation, using~(\ref{antipode_inverse}) and the definition of the boxed convolution
$$
{S(X_{i_1i_2})=-X_{i_1}^{-1}X_{i_2}^{-1}\cdot X_{i_1i_2}\cdot X_{i_1}^{-1}X_{i_2}^{-1}=-X_{i_1}^{-1}X_{i_2}^{-1}\cdot X_{i_1i_2}}\cdot S(X_{i_1}X_{i_2}).
$$

\begin{prop}[Antipode of the boxed convolution]
\label{co-inv-boxed}
Let $w=(i_1\dots i_n)\in[s]^*$ with $n\geq 2$. The antipode corresponding to the boxed convolution $\boxtimes$, is given by the {\em recursive} relation
\begin{equation}
\label{antipode}
S(X_w)=S(X_{(i_1\dots i_n)})=-(X^{-1}_{i_1}\cdots X^{-1}_{i_n})\cdot\sum_{\substack{\pi\in\operatorname{NC}(|w|)\\  \pi\neq0_{|w|}}}X_{w,\pi}\cdot S(X_{w,K(\pi)}).
\end{equation}
\end{prop}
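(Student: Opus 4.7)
The plan is to derive~(\ref{antipode}) directly from the defining antipode identity, using the coproduct formula already computed in~(\ref{co-product}). Recall that in any Hopf algebra the antipode satisfies $\mu\circ(\id\otimes S)\circ\Delta=\eta\circ\varepsilon$. Applying this to $X_w$ with $|w|=n\ge 2$, where $\varepsilon(X_w)=0$, and substituting~(\ref{co-product}) yields the identity
\begin{equation*}
\sum_{\pi\in\operatorname{NC}(n)} X_{w,\pi}\cdot S(X_{w,K(\pi)})\;=\;0
\end{equation*}
in the coordinate ring of $(\mathfrak{G}^s)_n$.

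Next I would isolate the summand corresponding to $\pi=0_n$. Since $K(0_n)=1_n$, we have $X_{w,0_n}=X_{i_1}\cdots X_{i_n}$ and $X_{w,K(0_n)}=X_{w,1_n}=X_w$, so this term is $(X_{i_1}\cdots X_{i_n})\cdot S(X_w)$. Moving it to the other side and left-multiplying by $(X_{i_1}\cdots X_{i_n})^{-1}=X_{i_1}^{-1}\cdots X_{i_n}^{-1}$, which is a bona fide element of the coordinate algebra by Proposition~\ref{coordinate_rings} together with~(\ref{antipode_inverse}), produces precisely the claimed formula~(\ref{antipode}).

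It remains to justify that~(\ref{antipode}) really \emph{defines} $S(X_w)$ by induction on $|w|$, with base case $S(X_i)=X_i^{-1}$ from~(\ref{antipode_inverse}). Since the coordinate ring is commutative, $S$ is an algebra homomorphism, so $S(X_{w,K(\pi)})=\prod_{V\in K(\pi)} S(X_{w|V})$. The Kreweras bijection maps $0_n\leftrightarrow 1_n$, hence the hypothesis $\pi\neq 0_n$ is equivalent to $K(\pi)\neq 1_n$, which forces each block $V\in K(\pi)$ to have strictly smaller cardinality $|V|<n$. Consequently every $S(X_{w|V})$ on the right-hand side has already been determined at an earlier step of the recursion, and the formula is well-defined.

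The main point to double-check, rather than a true obstacle, is the legitimacy of invoking the antipode axiom: one needs to know that the bi-algebra structure described by~(\ref{co-product}) actually is a Hopf algebra. This is guaranteed by Proposition~\ref{Group_prop}(4), since $\mathfrak{G}^s(R)$ is a group for every $R$, so the antipode exists and is unique as an algebra endomorphism on the generators $X_w$. The derivation above then pins it down uniquely via~(\ref{antipode}), completing the argument.
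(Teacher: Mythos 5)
Your derivation is essentially the computation the paper has in mind (the paper only writes out the case $|w|=2$ and then asserts the general recursion), and the algebra is correct: applying $\mu\circ(\id\otimes S)\circ\Delta$ to $X_w$, substituting the coproduct~(\ref{co-product}), using $\varepsilon(X_w)=0$ for $|w|\geq2$, isolating the $\pi=0_{|w|}$ summand (whose Kreweras complement is $1_{|w|}$, giving the term $X_{i_1}\cdots X_{i_n}\cdot S(X_w)$), and multiplying by $X_{i_1}^{-1}\cdots X_{i_n}^{-1}$ yields exactly~(\ref{antipode}). Your observation that $\pi\neq 0_n$ is equivalent to $K(\pi)\neq 1_n$, so that every block of $K(\pi)$ has size $<n$ and the recursion terminates, is the essential point and is argued correctly; it is also consistent with the paper's explicit $|w|=2$ formula.

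The one genuine defect is the justification of \emph{existence} of the antipode. You appeal to Proposition~\ref{Group_prop}(4), but the paper's proof of that very item explicitly defers the existence of $\boxtimes$-inverses to the present Proposition~\ref{co-inv-boxed}, so the citation is circular within the paper's logical order. As written, your argument establishes only uniqueness: \emph{if} an antipode exists, it must satisfy~(\ref{antipode}). The repair is already contained in your penultimate paragraph: take $S(X_i)=X_i^{-1}$ from~(\ref{antipode_inverse}) together with~(\ref{antipode}) as a recursive \emph{definition} of $S$ on the generators (well defined by your induction on $|w|$), extend $S$ as an algebra map, and then verify $\mu\circ(\id\otimes S)\circ\Delta=\eta\circ\varepsilon$. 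This identity holds on the generators by construction, and since the coordinate ring is commutative the convolution $\id\ast S$ is multiplicative, so checking it on algebra generators suffices; equivalently, the same recursion directly produces the $\boxtimes$-inverse of any $f$ with $X_i(f)\in R^{\times}$, which is precisely what Proposition~\ref{Group_prop}(4) was waiting for. With that reordering your proof is complete and self-contained.
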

In the case of the groups $(\mathfrak{G}_+^s)_n$ we have to take instead of  $X_i$, $i=1,\dots,s$, its equivalence class $\bar{X}_i$, which according to Proposition~\ref{coordinate_rings} amounts to dividing the previous identities by the ideal $\langle X_1-1,\dots X_s-1\rangle$.

Relation~(\ref{box-proj-op}) becomes then $\bar{X}_{w,\pi}=\prod_{V_j\in\pi} \bar{X}_{w|V_j}$, and 
the definition of the co-product~(\ref{co-product}) remains the same modulo the above replacement, i.e. $X_i\mapsto \bar{X}_i=\bar{1}$. 
\begin{exmpl}
\end{exmpl}
For $|w|=2$, we have
\begin{equation*}
\Delta(\bar{X}_{i_1i_2})=\overline{\Delta(X_{i_1i_2})} =  \bar{X}_{i_1i_2}\otimes 1+1\otimes \bar{X}_{i_1i_2},
\end{equation*}
and for $|w|=3$ 
\begin{eqnarray*}
\Delta(\bar{X}_{i_1i_2i_3})=\overline{\Delta(X_{i_1i_2i_3})} & = & \bar{X}_{i_1i_2i_3}\otimes 1+1\otimes \bar{X}_{i_1i_2i_3}\\
&& +\bar{X}_{i_2i_3}\otimes \bar{X}_{i_1i_3}
 +\bar{X}_{i_1i_2}\otimes \bar{X}_{i_2i_3}+\bar{X}_{i_1i_3}\otimes \bar{X}_{i_1i_2},
\end{eqnarray*}
where $\overline{\Delta(X_w)}$ denotes the equivalence class of $\Delta(X_w)$.
\begin{rem}
For $|w|=2$, the elements $\bar{X}_w$ are {\bf primitive}, i.e. they satisfy $\Delta=\bar{X}_w\otimes 1+1\otimes\bar{X}_w$ and $\varepsilon(\bar{X}_w)=0$.
\end{rem}
\begin{df}
\label{df:coconnected}
A Hopf algebra $A$ is {\bf co-connected}  if there exists an ascending filtration  $A_0\subseteq A_1\subseteq A_2\subseteq\dots$ of $A$ by subspaces $A_j$, $j\in\N$, such that 
$$
A_0=k, \qquad\bigcup_{j=0}^{\infty} A_j=A\qquad\text{and}\qquad\Delta(A_n)\subseteq\sum_{j=0}^n A_j\otimes A_{n-j},\quad\text{for all $n\geq0$}.
$$ 
\end{df}
Let us assign to $X_w$ and $\bar{X}_w$, respectively, the homogeneous weight $|w|-1$, for $|w|\geq1$, and to the constant function $1$, weight zero. As next we shall define the following families of subspaces: 
for $A:=k[X^{\pm1}_1,\dots,X^{\pm1}_s, X_w:|w|\geq2]$, and $j\in\N$, let
$$
C_j(k):=\operatorname{span}_k\{{X}_{w_1}\cdots{X}_{w_n}~|~|w_1|+\dots+|w_n|-n\leq j\}\subset A,  
$$
be the $k$-linear span of monomials of total degree at most $j$, and similarly for $\tilde{A}:=k[X_w:|w|\geq2]$
$$
\tilde{C}_j(k):=\operatorname{span}_k\{\bar{X}_{w_1}\cdots\bar{X}_{w_n}~|~|w_1|+\dots+|w_n|-n\leq j\}\subset \tilde{A}.  
$$
By Remark~\ref{rem:group-like} we then have  
$$
C_0(k)=\operatorname{span}_k\{{X}^{\pm 1}_{i_1}\cdots{X}^{\pm 1}_{i_n}~|~i_l\in[s], l\in\N\}=\operatorname{span}_k\{\text{group-like elements in $A$}\},
$$
and $\tilde{C}_0(k)=k$. In both cases we obtain a strictly ascending filtration by the non-negative integer numbers, and with the corresponding unions being equal to the algebras $A$ and $\tilde{A}$, respectively. 
\begin{prop}
\label{prop:grad_conn} The following holds:
\begin{itemize}
\item the Hopf algebra defined on $k[X^{\pm1}_1,\dots, X^{\pm1}_s,X_w:|w|\geq2]$ is $\N$-filtered, with $C_0(k)$ being equal to the $k$-linear span of the group-like elements. 
\item The Hopf algebra defined on $k[\bar{X}_w:|w|\geq 2]$
is co-connected.
\end{itemize}
\end{prop}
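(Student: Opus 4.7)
The plan is to reduce both statements to a single weight-counting identity for the coproduct formula~(\ref{co-product}), namely that each summand $X_{w,\pi}\otimes X_{w,K(\pi)}$ in $\Delta(X_w)$ has total weight exactly $|w|-1$. This is the weight of $X_w$ itself, so the coproduct will automatically respect the prescribed filtration, and the two assertions will follow by examining what sits in degree~$0$.

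First I would compute the weight of $X_{w,\pi}=\prod_{V_j\in\pi}X_{w|V_j}$ for an arbitrary $\pi\in\operatorname{NC}(|w|)$. Since each factor $X_{w|V_j}$ has weight $|V_j|-1$, summing over blocks gives $\sum_j(|V_j|-1)=|w|-|\pi|$. The analogous statement holds for $X_{w,K(\pi)}$, whose weight is $|w|-|K(\pi)|$. Adding and applying the Kreweras identity~(\ref{part-Krew_rel}) yields
\begin{equation*}
\bigl(|w|-|\pi|\bigr)+\bigl(|w|-|K(\pi)|\bigr)=2|w|-(|w|+1)=|w|-1,
\end{equation*}
i.e.\ every term contributing to $\Delta(X_w)$ lies in $\sum_{i+j=|w|-1}C_i\otimes C_j$ (respectively $\tilde C_i\otimes \tilde C_{j}$). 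Since $\Delta$ is an algebra homomorphism, weights add under multiplication, so for a monomial $M=X_{w_1}\cdots X_{w_n}$ of total weight $n$ one obtains $\Delta(M)\in\sum_{i+j=n}C_i\otimes C_j$. Combined with the obvious compatibility $C_i\cdot C_j\subseteq C_{i+j}$ and $S(C_n)\subseteq C_n$ (which follows by induction from Proposition~\ref{co-inv-boxed}, noting that the recursion only involves $X_{w,\pi}$ and $X_{w,K(\pi)}$ of weight $\leq n$), this exhibits both algebras as $\N$-filtered Hopf algebras.

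For the first part, I would identify $C_0(k)$ by unwinding the definition: $|w_1|+\dots+|w_n|-n\leq 0$ forces each $|w_r|=1$, so $C_0(k)=\operatorname{span}_k\{X_{i_1}^{\pm1}\cdots X_{i_n}^{\pm1}\mid i_\ell\in[s]\}$. On the other hand, Remark~\ref{rem:group-like} gives that each $X_i$ is group-like, and products (and inverses) of group-like elements remain group-like, accounting for every basis element of $C_0(k)$; conversely, any group-like element $g$ of a commutative Hopf algebra satisfies $g^{-1}=S(g)$ and, because the generators of $A$ in positive weight are primitive modulo lower terms (this is what the weight computation above shows), a standard argument forces $g$ to lie in $C_0(k)$. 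So $C_0(k)$ is exactly the span of the group-like elements.

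For the second part, the same filtration $\tilde C_j$ on $\tilde A=k[\bar X_w:|w|\geq 2]$ is well-defined since $\bar X_i=1$. Here each generator $\bar X_w$ has weight $|w|-1\geq 1$, so a monomial $\bar X_{w_1}\cdots\bar X_{w_n}$ of weight $\leq 0$ must be the empty product, giving $\tilde C_0(k)=k$. The union $\bigcup_j \tilde C_j$ is $\tilde A$ by construction, and the coproduct compatibility $\Delta(\tilde C_n)\subseteq\sum_{i+j=n}\tilde C_i\otimes\tilde C_{n-i}$ was established above, verifying Definition~\ref{df:coconnected}. The main subtlety, and the place one must be careful, is precisely the weight bookkeeping in $\Delta(X_w)$: without the Kreweras relation~(\ref{part-Krew_rel}) one would obtain only an inequality, but the identity $|\pi|+|K(\pi)|=|w|+1$ gives an exact balance and makes both the filtration and the co-connectedness work cleanly.
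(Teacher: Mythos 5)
Your proposal is correct and follows essentially the same route as the paper: the heart of both arguments is the computation $\operatorname{wdeg}(X_{w,\pi})=\sum_{V_j\in\pi}(|V_j|-1)=|w|-|\pi|$ combined with the Kreweras relation $|\pi|+|K(\pi)|=|w|+1$ to show each summand of $\Delta(X_w)$ has total weight exactly $|w|-1$, plus the multiplicativity $C_m\cdot C_n\subseteq C_{m+n}$. Your additional remarks on the antipode and on why every group-like element must lie in $C_0(k)$ go slightly beyond what the paper writes out, but the essential mechanism is identical.
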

\begin{proof}
The following properties remain to be shown:

For ${X}_{w_1}\cdots{X}_{w_p}\in C_m(k)$ and ${X}_{\tilde{w}_1}\cdots{X}_{\tilde{w}_q}\in C_n(k)$, we have ${X}_{w_1}\cdots{X}_{w_p}\cdot \bar{X}_{\tilde{w}_1}\cdots\bar{X}_{\tilde{w}_q}\in C_{m+n}(k)$ and therefore $C_m(k)\cdot C_n(k)\subset C_{m+n}(k)$, and the same holds true for $\tilde{A}$.

Let us show now the third property in Definition~\ref{df:coconnected}.
For $n:=|w|-1$, the weighted degree of $\bar{X}_{w,\pi}$ of a partition $\pi\in\operatorname{NC}(|w|)$ is
$$
\operatorname{wdeg}(\bar{X}_{w,\pi})=|w|-|\pi|=:j,
$$
and for the Kreweras complement, using relation~(\ref{part-Krew_rel}): $|\pi| + |K(\pi)|=|w|+1$,
$$
\operatorname{wdeg}(\bar{X}_{w,K(\pi)})=|w|-|K(\pi)|=|\pi|-1=n-j.
$$
So, the total weighted degree $\operatorname{wdeg}(\bar{X}_{w,\pi})+\operatorname{wdeg}(\bar{X}_{w,K(\pi)})$ is equal to $|w|-1=n$, and the same property is satisfied in $A$.
\end{proof}

By taking projective limits we obtain
\begin{thm}
\label{first_main_thm}
Let $s\in\N^{\times}$ and $R\in\mathbf{cAlg}_k$. The following functors define affine group schemes, such that:
\begin{itemize}
\item $\mathfrak{G}^s$ is represented by
\begin{equation}
\label{}
k[X^{\pm 1}_1,\dots, X^{\pm 1}_s, X_w:|w|\geq 2];
\end{equation}
which is filtered but not connected.
\item $(\mathfrak{G}^s)_1$ is represented by  
$$ k[X_1,X^{-1}_1,\dots,X_s,X^{-1}_s],$$
and it consists of {\em group-like} elements.
\item $\mathfrak{G}^s_+$ is represented by the co-connected Hopf algebra
\begin{equation}
\label{}
k[\bar{X}_w:|w|\geq 2].
\end{equation}
For $s=1$, it is co-commutative but not for $s\geq2$. 
\end{itemize}
\end{thm}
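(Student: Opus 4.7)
The plan is to assemble the pieces already developed in Section~5.2 and pass from each truncation to the inverse limit using the pro-group structure of Proposition~\ref{basic_properties}. Proposition~\ref{coordinate_rings} identifies the underlying commutative $k$-algebras at each truncation level; the formulas for $\Delta$ in~(\ref{co-product}), for $\varepsilon$ displayed immediately after, and for $S$ in~(\ref{antipode}) are the candidate Hopf-algebra structure at each finite level $n$. First I would verify the Hopf algebra axioms on generators: coassociativity translates, under the Yoneda dictionary from Section~\ref{Sec:GSch}, into the associativity of $\boxtimes$ proved in Proposition~\ref{prop_non-distributive}.1; the counit identity was already checked in the worked example that immediately follows the $|w|=4$ display; and the antipode axiom $\mu \circ (S \otimes \id) \circ \Delta = \eta \circ \varepsilon$ is exactly the functional equation $f \boxtimes S(f) = 1_{\mathfrak{G}^s(R)}$, whose unique solution is furnished by the recursion~(\ref{antipode}) of Proposition~\ref{co-inv-boxed}. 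The critical point is that this recursion stays inside the claimed coordinate algebra: at each step it only uses the formal inverses $X_1^{-1},\dots,X_s^{-1}$ (present by construction) together with $S(X_{w'})$ for strictly shorter words $w'$ (available by induction).

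Next I would invoke the locality remark following Definition~\ref{boxconv_df}: the coefficient of $f \boxtimes g$ at a word $w$ depends only on coefficients of $f$ and $g$ at words of length $\leq |w|$. This guarantees simultaneously that~(\ref{co-product}) lies in the uncompleted tensor product at each truncation, and that the truncations $\pr_{ij}$ are morphisms of Hopf algebras. Passing to the direct limit of coordinate rings (which, by Proposition~\ref{basic_properties} and the Hopf algebra/affine group scheme correspondence, represents $\mathfrak{G}^s = \varprojlim_n (\mathfrak{G}^s)_n$) produces $k[X_1^{\pm 1},\dots,X_s^{\pm 1}, X_w : |w| \geq 2]$. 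The same argument modulo the ideal $\langle X_i - 1 : i \in [s] \rangle$, as in Proposition~\ref{coordinate_rings}.3, yields the representing Hopf algebra $k[\bar X_w : |w| \geq 2]$ of $\mathfrak{G}^s_+$. For $(\mathfrak{G}^s)_1$, Proposition~\ref{coordinate_rings}.2 together with Remark~\ref{rem:group-like} identifies the representing Hopf algebra as the group algebra of the free abelian group of rank $s$, in agreement with $(\mathfrak{G}^s)_1 \cong \mathbb{G}_m^s$ from Proposition~\ref{basic_properties}.

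The structural claims are then delivered by Proposition~\ref{prop:grad_conn}: its filtration $(C_j(k))$ shows that the representing algebra of $\mathfrak{G}^s$ is filtered, but because $C_0$ already contains the nontrivial group-like elements $X_i^{\pm 1}$ it is not connected; the parallel filtration $(\tilde C_j(k))$ on $k[\bar X_w : |w| \geq 2]$ satisfies $\tilde C_0 = k$, yielding co-connectedness. For co-commutativity of $\Delta$ on $k[\bar X_w : |w| \geq 2]$, I would reparametrise the sum~(\ref{co-product}) via $\pi \mapsto K(\pi)$, which is permissible because $K$ is a bijection on $\operatorname{NC}(|w|)$, reducing the question to whether $\bar X_{w,\pi} = \bar X_{w,K^2(\pi)}$. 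By Remark~\ref{pi-K^2(pi)}, $K^2(\pi)$ is a cyclic shift of $\pi$; for $s=1$ each factor $\bar X_{w|V_j}$ depends only on the block size so the cyclic shift is invisible, while for $s \geq 2$ the explicit counterexample recorded in table~(\ref{comb_table}) exhibits a word and partition for which the two sides differ. The main obstacle I anticipate is not conceptual but hygienic: at every stage one must ensure that the infinite sums of coefficient-wise products remain inside the uncompleted tensor product, so that the resulting object is a genuine affine group scheme rather than a pro-formal one. That this works out is precisely the content of the locality of $\boxtimes$ combined with the pro-group decomposition of Proposition~\ref{basic_properties}.
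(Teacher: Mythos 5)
Your proposal is correct and follows essentially the route the paper intends: the theorem is stated as a consequence of Proposition~\ref{coordinate_rings}, the Hopf structure worked out on generators (co-product~(\ref{co-product}), co-unit, and the antipode recursion of Proposition~\ref{co-inv-boxed}), and Proposition~\ref{prop:grad_conn}, assembled by passing to projective limits of the truncations from Proposition~\ref{basic_properties}. Your additional care about the well-foundedness of the antipode recursion and the locality of $\boxtimes$ keeping everything in the uncompleted tensor product only makes explicit what the paper leaves implicit.
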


As mentioned in Section~\ref{Sec:GSch}, there is a correspondence between subgroups and Hopf subalgebras. 
Therefore, the subgroup in $\mathfrak{G}^s$ corresponding to the augmentation ideal $\ker(\varepsilon)=k[X_w:|w|\geq2]$ is exactly the normal subgroup $\mathfrak{G}^s_+$, 
and the {characters} of $\mathfrak{G}^s$ are exactly the group-like elements, i.e. $k[X_1^{\pm1},\dots, X_s^{\pm1}]$.

\subsection{Formal group laws and Lie algebra associated to freeness}
We have $k[\bar{X}_w:|w|\geq2]\subset k[[\bar{X}_w:|w|\geq2]]$. 
By abuse of notation, we shall also write $\bar{Y}_w:=1\otimes \bar{X}_w$ and $\bar{X}_w=\bar{X}_w\otimes 1$. 
\begin{prop}[Associated formal group law]
Let $s\in\N^{\times}$, $w\in[s]^*_+$ and $R\in\mathbf{cAlg}_k$. The infinite-dimensional formal group law 
\begin{equation}
\label{free_g_law}
F_{\mathfrak{G}_+^s}:=(F_w(\underline{X},\underline{Y}))_{|w|\geq2},
\end{equation}
corresponding to $(\mathfrak{G}_+^s(R),\boxtimes)$,  is given by 
\begin{eqnarray*}
F_w(\underline{X},\underline{Y})&=&\bar{X}_w\otimes 1+1\otimes \bar{X}_w+\sum_{\substack{\pi\in\operatorname{NC}(|w|)\\  \pi\neq0_{|w|},1_{|w|}}} \bar{X}_{w,\pi}\otimes \bar{X}_{w,K(\pi)}\\
&=& \bar{X}_w+\bar{Y}_w\quad \mod\deg(2).
\end{eqnarray*}
For $s=1$, it is commutative and for $s\geq2$, non-commutative.
\end{prop}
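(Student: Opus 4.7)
The plan is to read off the formal group law directly from the co-product on the Hopf algebra $k[\bar{X}_w : |w|\geq 2]$ representing $\mathfrak{G}^s_+$, which was computed in Theorem~\ref{first_main_thm} and the preceding discussion. By the general correspondence between smooth formal groups and $n$-tuples of power series (cf.~Section~\ref{Sec:GSch}), if $G = \specf_k(A)$ with $A \cong k[[T_i : i \in I]]$, then the formal group law is given component-wise by $F_i(\underline{X},\underline{Y}) = \Delta(T_i)$ under the identification $A \hat{\otimes}_k A \cong k[[X_i, Y_i : i \in I]]$, with $X_i \leftrightarrow T_i \otimes 1$ and $Y_i \leftrightarrow 1 \otimes T_i$. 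In our situation $I = \{w \in [s]^*_+ : |w|\geq 2\}$ and $T_w = \bar{X}_w$.

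The key step is to specialise the co-product formula
\[
\Delta X_w \;=\; \sum_{\pi\in\operatorname{NC}(|w|)} X_{w,\pi}\otimes X_{w,K(\pi)}
\]
from~(\ref{co-product}) to the quotient $k[\bar X_w:|w|\geq 2]$, obtained by imposing $\bar X_i = 1$ for $i\in[s]$. I would separate the sum into three parts. The partition $\pi = 1_{|w|}$ contributes $X_w \otimes \prod_{j=1}^{|w|} X_{i_j}$, which after passing to the bar-variables becomes $\bar{X}_w \otimes 1$; the partition $\pi = 0_{|w|}$ (whose Kreweras complement is $1_{|w|}$) symmetrically contributes $1 \otimes \bar{X}_w$; the remaining partitions satisfy $\pi \neq 0_{|w|}, 1_{|w|}$ and contribute $\bar{X}_{w,\pi}\otimes \bar{X}_{w,K(\pi)}$, yielding exactly the formula for $F_w$ claimed in the statement.

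Next I would verify the degree-two assertion $F_w \equiv \bar X_w + \bar Y_w \mod \deg 2$. For every $\pi \notin \{0_{|w|}, 1_{|w|}\}$ we have $|\pi| \geq 2$ and, by the Kreweras relation~(\ref{part-Krew_rel}), also $|K(\pi)| = |w| + 1 - |\pi| \geq 2$ (using $|\pi| \leq |w|-1$). Hence both factors $\bar{X}_{w,\pi}$ and $\bar{X}_{w,K(\pi)}$ are products of at least two of the variables $\bar{X}_{w'}$ with $|w'|\geq 2$; each such factor has weighted degree $\geq 1$ in the variables on its side of the tensor product, so the mixed term has total degree $\geq 2$. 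The axioms of Definition~\ref{FG} are then automatic: $F_w(\underline{X}, \underline{0}) = \bar{X}_w$ and $F_w(\underline{0}, \underline{Y}) = \bar{Y}_w$ follow from the separation above, while associativity is inherited from the co-associativity of $\Delta$ on the Hopf algebra, which in turn reflects the associativity of $\boxtimes$ proved in Proposition~\ref{prop_non-distributive}.

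Finally, the (non-)commutativity statement is not a separate calculation but a direct translation. By Definition~\ref{FG}(4), $F_{\mathfrak{G}_+^s}$ is commutative iff $F_w(\underline{X},\underline{Y}) = F_w(\underline{Y},\underline{X})$ for all $w$, which in turn is equivalent to the co-commutativity of $\Delta$, i.e.\ to the commutativity of the group $(\mathfrak{G}^s_+(R),\boxtimes)$. Proposition~\ref{prop_non-distributive}(3) tells us precisely that $\boxtimes$ is commutative for $s = 1$ and non-commutative for $s \geq 2$, so the corresponding dichotomy transfers to $F_{\mathfrak{G}_+^s}$. The only mild subtlety to check is that the non-commutativity survives restriction to the subgroup $\mathfrak{G}^s_+$, but the explicit asymmetric witnesses exhibited in table~(\ref{comb_table}) of that proof already lie in $\mathfrak{G}^s_+$ (involving only words of length $\geq 3$), so no additional argument is needed.
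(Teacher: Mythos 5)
Your derivation of the group law itself is the same as the paper's: both read $F_w$ off the co-product~(\ref{co-product}) after passing to the bar-variables, using the general dictionary between smooth formal groups and their co-products from Section~\ref{Sec:GSch}; you merely spell out the splitting of the sum over $\operatorname{NC}(|w|)$ into the terms $\pi=1_{|w|}$, $\pi=0_{|w|}$ and the rest, which is correct (one small wording slip: the reason each mixed term dies mod degree~$2$ is not that $\bar X_{w,\pi}$ is a product of at least two variables $\bar X_{w'}$ with $|w'|\geq2$ --- singleton blocks reduce to $\bar 1$ --- but that $\pi\neq 0_{|w|}$ forces $\pi$ to contain a block of size $\geq2$, and $\pi\neq 1_{|w|}$ forces the same for $K(\pi)$, so both tensor factors are non-constant; your stated conclusion is nevertheless right). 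Where you genuinely diverge is the commutativity dichotomy. The paper deduces it from Lemma~\ref{Lie_algebra_boxed} (the bilinear form $B_w$ is symmetric iff $s=1$) together with the formal Lie theory of Theorem~\ref{Q-theorems}; you instead translate commutativity of $F_{\mathfrak{G}^s_+}$ into co-commutativity of $\Delta$, i.e.\ commutativity of $(\mathfrak{G}^s_+(R),\boxtimes)$, and quote Proposition~\ref{prop_non-distributive}(3). Your route is more elementary and, for $s=1$, arguably better: it works over an arbitrary $R\in\mathbf{cAlg}_k$, whereas Theorem~\ref{Q-theorems} requires a $\Q$-algebra even though the proposition is stated without that hypothesis. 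The paper's route buys the explicit degree-two obstruction (the asymmetry of $B_w$), which is exactly what is reused later in Theorem~\ref{main-iso}. One point you should tighten: exhibiting a single $\pi_0$ with $\bar X_{w,\pi_0}\otimes\bar X_{w,K(\pi_0)}\neq \bar X_{w,K(\pi_0)}\otimes\bar X_{w,K^2(\pi_0)}$, as table~(\ref{comb_table}) does, shows two individual summands differ but not yet that the full sums $\sum_{\pi}\bar X_{w,\pi}\otimes\bar X_{w,K(\pi)}$ and their flips differ, since a priori another partition could supply the missing matching term; the clean way to rule this out is precisely the paper's device of isolating the degree-$(1,1)$ component of $F_w$, where the relevant monomials can be compared directly. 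This is a repairable bookkeeping step rather than a conceptual gap.
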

\begin{proof}
The form of the group law follows directly from the general correspondence between smooth formal groups and the group law its co-product induces, cf. Section~\ref{Sec:GSch}.

The statement about commutativity follows from Lemma~\ref{Lie_algebra_boxed} and Theorem~\ref{Q-theorems}.
\end{proof}
Let us now determine the Lie algebra corresponding to the formal group law~(\ref{free_g_law}).

In~(\ref{deg_oper}) we assigned to $X_{w,\pi}$ a monomial of degree $|\pi|$. Here, we have to modify it, as we work with the equivalence classes $\bar{X}_w$. For a partition $\pi=(V_1,\dots V_r)$, we let
\begin{equation}
\label{monom_order}
\deg(\bar{X}_{w,\pi}):=\begin{cases}
      & r\quad \text{if $|V_i|\geq 2$, for all $i=1,\dots,r$},\\
      & r-\hash\{\text{blocks $V_i$ with $|V_i|=1$}\},\,\,\text{otherwise}.
\end{cases}
\end{equation}

\begin{lem}
\label{Lie_algebra_boxed}
For $s\geq2$ and $|w|\geq3$, the bilinear form $B_w(\underline{X},\underline{Y})$ associated to $F_w(\underline{X},\underline{Y})$ in~(\ref{free_g_law}), is in general not symmetric, and  
$
B_{\mathfrak{G}^s_+}=(B_w)_{|w|\geq2}
$
is never symmetric.  For $s=1$, $B_w$ is symmetric for all $|w|\geq1$, and therefore also  $B_{\mathfrak{G}^1_+}$. 
\end{lem}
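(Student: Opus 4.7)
The approach is to reduce both assertions to combinatorial statements by first pinning down exactly which partitions contribute to $B_w(\underline{X},\underline{Y})$, then handling $s=1$ via the Kreweras bijection and $s\geq 2$ via a single explicit word.

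First I would extract $B_w$ from $F_w(\underline{X},\underline{Y})$. Using the modified degree convention in~(\ref{monom_order}), a summand $\bar{X}_{w,\pi}\otimes\bar{X}_{w,K(\pi)}$ contributes to the bilinear part precisely when both factors have weight $1$, i.e.\ when each of $\pi$ and $K(\pi)$ has exactly one block of size $\geq 2$ and all other blocks are singletons. By~(\ref{part-Krew_rel}) the sizes $m,n$ of these two distinguished blocks satisfy $m+n=|w|+1$, so $B_w$ is a finite, fully explicit sum indexed by this restricted class of partitions.

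For $s=1$, every subword $w|V$ is $(1,\dots,1)$, hence $\bar{X}_{w|V}$ depends only on $|V|$. Therefore each $\pi$-th summand of $B_w$ reduces to the monomial $\bar{X}_{(1^m)}\otimes\bar{X}_{(1^n)}$ determined solely by the pair $(m,n)$ of non-singleton block sizes. Symmetry then reduces to the claim that the number of admissible $\pi$ with data $(m,n)$ equals the number with $(n,m)$, which I would establish by observing that the Kreweras complement itself realises this bijection: if $\pi$ lies in the $(m,n)$-class then $K(\pi)$ lies in the $(n,\,?\,)$-class, and by Remark~\ref{pi-K^2(pi)} $K^2(\pi)$ has the same block structure as $\pi$, forcing $?=m$. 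Hence $K$ is a bijection between the $(m,n)$- and $(n,m)$-classes, proving $B_w(\underline{X},\underline{Y})=B_w(\underline{Y},\underline{X})$, and by summing over $w$ the symmetry of $B_{\mathfrak{G}^1_+}$ follows.

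For $s\geq 2$ and $|w|\geq 3$ it suffices to exhibit one word whose $B_w$ fails symmetry, since this already forces the vector $B_{\mathfrak{G}^s_+}$ to be non-symmetric. I would take $w=(aba)$ over any alphabet containing two distinct letters $a,b$. Using the five elements of $\operatorname{NC}(3)$ and the Kreweras pairings $2\leftrightarrow 4$, $3\leftrightarrow 2$ listed in the excerpt, the three admissible partitions yield
\[
B_w(\underline{X},\underline{Y}) = \bar{X}_{ba}\otimes\bar{X}_{aa} + \bar{X}_{ab}\otimes\bar{X}_{ba} + \bar{X}_{aa}\otimes\bar{X}_{ab},
\]
which differs from its $X\!\leftrightarrow\! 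Y$ swap because, for instance, the monomial $\bar{X}_{ba}\otimes\bar{X}_{aa}$ appears whereas $\bar{X}_{aa}\otimes\bar{X}_{ba}$ does not. The main obstacle is conceptual rather than technical: correctly identifying, via~(\ref{monom_order}), the small set of partitions that survive in the bilinear part after singleton blocks are collapsed. Once this is done, the $s=1$ case collapses to the bijectivity of $K$ on block-size data, and the $s\geq 2$ case is settled by the single word $w=(aba)$.
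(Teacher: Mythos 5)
Your proof is correct and follows essentially the same route as the paper's: for $s=1$ you exploit that the Kreweras complement is a bijection whose square preserves block structure (so re-indexing the contributing partitions by $K$ swaps the $(m,n)$- and $(n,m)$-classes and symmetrises the sum), and for $s\geq 2$ you exhibit a partition of the type $\pi_0=((1),(2,\dots,|w|))$ on a word mixing two letters --- the paper uses the words $ab\cdots b$ and $ab\cdots a$ with this same $\pi_0$, while you specialise to the single word $(aba)$. Your explicit expression for $B_{aba}$ agrees with the paper's displayed $|w|=3$ bilinear form after substituting $i_1=i_3=a$, $i_2=b$, and the asymmetry you point out ($\bar{X}_{ba}\otimes\bar{Y}_{aa}$ present, $\bar{X}_{aa}\otimes\bar{Y}_{ba}$ absent) is the same obstruction the paper records via $\bar{X}_{w,\pi_0}\neq\bar{X}_{w,K^2(\pi_0)}$.
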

\begin{proof}
The bilinear form $B_w(\underline{X},\underline{Y})$ would be symmetric if
\begin{equation}
\label{bilinear_form}
\bar{X}_{w,\pi}\otimes \bar{X}_{w,K(\pi)}=\bar{X}_{w,K(\pi)}\otimes \bar{X}_{w,K^2(\pi)}
\end{equation}
were to hold. For $s\geq2$ and $|w|\geq3$, this is not so by~Proposition~\ref{prop_non-distributive}, but it is true for $s=1$ and $|w|\geq1$.

Next, let us show that the vector of bilinear forms $B_{\mathfrak{G}^s_+}$ is neither zero nor symmetric.

An expression $\bar{X}_{w,\pi}\otimes \bar{X}_{w,K(\pi)}$ gives a monomial of degree two if $\bar{X}_{w,\pi}$ and $\bar{X}_{w,K(\pi)}$ are of degree one. 

As in the proof of Proposition~\ref{prop_non-distributive}, consider for $|w|\geq3$, the partition $\pi_0:=((1),(2,\dots,|w|))$. Then $K(\pi_0)=((1,|w|),(2),\dots(|w|-1))$ and $K^2(\pi_0)=((1,2,\dots,|w|-1),|w|)$. Hence 
$$
\deg(\bar{X}_{w,\pi_0}\otimes\bar{X}_{w,K(\pi_0)})=2\qquad\text{and}\qquad\deg(\bar{X}_{w,K(\pi_0)}\otimes\bar{X}_{w,K^2(\pi_0)})=2.
$$
Therefore, 
$$
B_w(\underline{X},\underline{Y})=\bar{X}_{w,\pi_0}\otimes \bar{X}_{w,K(\pi_0)}+\bar{X}_{w,K(\pi_0)}\otimes \bar{X}_{w,K^2(\pi_0)}+\text{other quadratic terms},
$$ is not zero. In general, it is also not symmetric, as we show next.

As in the proof of Proposition~\ref{prop_non-distributive}, consider a word of the form $w_1:=ab\dots b$, if $|w|$ is odd,  and $w_{2}:=ab\dots a$ if $w$ is even, and then $\bar{X}_{w_{i},\pi_0}\neq \bar{X}_{w_{i},K^2(\pi_0)}$, $i=1,2$. Therefore, the corresponding bilinear form $B_{w_i}$ is not symmetric, and hence neither is $B_{\mathfrak{G}^s_+}$.

Finally, it follows from the above that $B_{\mathfrak{G}^1_+}$ is non-zero and symmetric. 
\end{proof}
\begin{exmpl}
\label{Bilinear_form}
\end{exmpl}
Let us give the bilinear form for $|w|\leq 4$, explicitly. For

$|w|=2$: 
$$
B_{i_1i_2}(\underline{X},\underline{Y})=0~.
$$

$|w|=3$:
\begin{eqnarray*}
B_{i_1i_2i_3}(\underline{X},\underline{Y})& = &\bar{X}_{i_2i_3}\otimes \bar{Y}_{i_1i_3} +\bar{X}_{i_1i_2}\otimes \bar{Y}_{i_2i_3}+\bar{X}_{i_1i_3}\otimes \bar{Y}_{i_1i_2}
\end{eqnarray*}
$|w|=4$:
\begin{eqnarray*}
B_{i_1i_2i_3i_4}(\underline{X},\underline{Y})
&=& \bar{X}_{i_2i_3i_4}\otimes \bar{Y}_{i_1i_4}+
\bar{X}_{i_1i_3i_4}\otimes \bar{Y}_{i_1i_2}+X_{i_1i_2i_4}\otimes \bar{Y}_{i_2i_3}  +\bar{X}_{i_1i_2i_3}\otimes \bar{Y}_{i_3i_4}\\ 
&&+\bar{X}_{i_3i_4}\otimes \bar{Y}_{i_1i_2i_4}+
\bar{X}_{i_2i_3}\otimes \bar{Y}_{i_1i_3i_4} +
\bar{X}_{i_1i_2}\otimes  \bar{Y}_{i_2i_3i_4}+
\bar{X}_{i_1i_4}\otimes \bar{Y}_{i_1i_2i_3}
\end{eqnarray*}

\begin{thm} 
\label{main-iso}
For a $\Q$-algebra $R$, the additive group $\mathbb{G}_a^{\N}(R)$ and the group $(\mathfrak{G}^1_+(R),\boxtimes)$ are {\em isomorphic}. For $s\geq 2$, $(\mathfrak{G}^s_+,\boxtimes)$ and $\mathbb{G}^{\N}_a$ are not isomorphic (as the first group is not commutative). 
\end{thm}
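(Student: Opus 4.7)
The strategy is to apply the Formal Lie Theory summarised in Theorem~\ref{Q-theorems}, which is available precisely because $R$ is a $\Q$-algebra. The group $(\mathfrak{G}^s_+(R), \boxtimes)$ is the group of $R$-valued points of the formal group law $F_{\mathfrak{G}^s_+}$ displayed in~(\ref{free_g_law}); the finite-support condition in Definition~\ref{FG}(2) guarantees that each component $F_w(\underline{X},\underline{Y})$ involves only finitely many of the variables $\bar{X}_{w'}, \bar{Y}_{w'}$, so substitution of elements of $R$ is unambiguous and any isomorphism of formal group laws automatically descends to an isomorphism of the $R$-point groups.

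For $s = 1$, Proposition~\ref{prop_non-distributive}(3) shows that $\boxtimes$ is commutative, so $F_{\mathfrak{G}^1_+}$ is a commutative formal group law in the sense of Definition~\ref{FG}. Theorem~\ref{Q-theorems}(2)(b) then provides an isomorphism $F_{\mathfrak{G}^1_+} \cong \hat{\mathbb{G}}_a^{\N}$ of countably infinite-dimensional formal group laws, and passing to $R$-points yields the claimed group isomorphism $(\mathfrak{G}^1_+(R), \boxtimes) \cong \mathbb{G}_a^{\N}(R)$.

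For $s \geq 2$, I argue by Lie-algebraic obstruction. By Lemma~\ref{Lie_algebra_boxed} the vector of bilinear forms $B_{\mathfrak{G}^s_+}$ is not symmetric, so the bracket defined in~(\ref{FG_Lie}) does not vanish and the Lie algebra $L(F_{\mathfrak{G}^s_+})$ is non-abelian. In contrast, $L(\hat{\mathbb{G}}_a^{\N}) = 0$ is trivial. Theorem~\ref{Q-theorems}(1) therefore rules out any isomorphism between $F_{\mathfrak{G}^s_+}$ and $\hat{\mathbb{G}}_a^{\N}$, hence also between $(\mathfrak{G}^s_+(R), \boxtimes)$ and $\mathbb{G}_a^{\N}(R)$.

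The real work of the argument has already been done in Lemma~\ref{Lie_algebra_boxed}, where the Lie-theoretic obstruction was pinned down combinatorially to the failure of Kreweras complementation to fix non-cyclically-invariant words over an alphabet of size $s \geq 2$; the only subtle point in the present step is the passage from formal group laws to $R$-point groups, which, as noted above, is automatic by the finite-support property. Given these ingredients, this theorem is essentially a dictionary entry of Formal Lie Theory.
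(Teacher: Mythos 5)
Your argument is essentially the paper's own proof, which consists of exactly the two ingredients you invoke: Lemma~\ref{Lie_algebra_boxed} (the symmetry/non-symmetry of the bilinear part of the formal group law) combined with the Formal Lie Theory of Theorem~\ref{Q-theorems}, the only cosmetic difference being that for $s=1$ you pass through commutativity of $\boxtimes$ and Theorem~\ref{Q-theorems}(2)(b) rather than through the vanishing of the Lie bracket and part (1). Your expansion is correct and in fact supplies more detail (the finite-support remark, the passage to $R$-points) than the paper's one-line proof.
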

\begin{proof}
This follows from Lemma~\ref{Lie_algebra_boxed} and Theorem~\ref{Q-theorems}.
\end{proof}

\begin{rem}
\begin{itemize}
\item This answers a major question in Free Probability, which had been open since the late 1980s, cf.~\cite{S97,N,NS}. The present proof is more conceptual than the one we gave in ~\cite{FMcK2}.
\item The concrete form of the isomorphism will be given in Theorem~\ref{Thm:main_one_dim}.
\end{itemize} 
\end{rem}

\subsection{Faithful representations and the general $\mathcal{S}$-transform}

Here we generalise the $\mathcal{S}$-transform to arbitrary dimensions. In this section we assume $k$ to be a field. 
\begin{prop} 
For $s,n\in\N$, the groups $((\mathfrak{G}^s)_n,\boxtimes)$ are faithfully representable as closed subgroups of some $\operatorname{GL}_N$, $N=N(n,s)$. To every $\mathfrak{G}^s$ corresponds a faithful family of finite-dimensional representations.
\end{prop}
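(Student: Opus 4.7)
The strategy is to invoke the standard embedding theorem for affine algebraic groups over a field (every affine group scheme of finite type over $k$ admits a faithful finite-dimensional representation and hence sits as a closed subgroup of some $\operatorname{GL}_N$) and then pass to the projective limit. All the structural input that is needed has been assembled in the previous sections.

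First, I would note that by Theorem~\ref{first_main_thm} and Proposition~\ref{coordinate_rings}, the truncation $(\mathfrak{G}^s)_n$ is an affine algebraic group over the field $k$, with finitely generated co-ordinate Hopf algebra
$$
A_n := k[X_1^{\pm 1},\dots, X_s^{\pm 1}, X_w: 2\leq |w|\leq n],
$$
the co-product being the explicit formula~(\ref{co-product}). To produce a faithful representation it is enough to exhibit a finite-dimensional sub-comodule $V \subset A_n$ that generates $A_n$ as a $k$-algebra: dualising $\Delta\colon V\to V\otimes A_n$ then yields a morphism of affine group schemes $\rho_n\colon (\mathfrak{G}^s)_n\to\operatorname{GL}(V)\cong\operatorname{GL}_N$ with $N=\dim_k V$, and the fact that $V$ contains algebra generators forces $\rho_n$ to be a closed immersion (cf.~\cite{W}, Thm.~3.4).

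The candidate subspace is furnished by the filtration introduced in Proposition~\ref{prop:grad_conn}. Concretely, take $V := C_{n-1}(k)$; it is finite-dimensional, and by construction it contains all the generators: the group-like elements $X_i^{\pm 1}$ live in $C_0(k)$, and each $X_w$ with $2\leq |w|\leq n$ has weighted degree $|w|-1\leq n-1$, hence lies in $C_{n-1}(k)$. Stability under the right-regular co-action is precisely the content of the final computation in the proof of Proposition~\ref{prop:grad_conn}, which, using the Kreweras relation~(\ref{part-Krew_rel}), shows that
$$
\Delta(C_{n-1})\subseteq\sum_{i+j=n-1}C_i(k)\otimes C_j(k)\subseteq C_{n-1}(k)\otimes A_n,
$$
so that $V$ is indeed an $A_n$-subcomodule. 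This yields the desired faithful representation $\rho_n$ of $(\mathfrak{G}^s)_n$, and hence its realisation as a closed subgroup of $\operatorname{GL}_N$.

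For the last assertion about $\mathfrak{G}^s$, I would use the pro-algebraic structure from Proposition~\ref{basic_properties}: since $\mathfrak{G}^s=\varprojlim_n(\mathfrak{G}^s)_n$, the compositions $\rho_n\circ\pr_n\colon\mathfrak{G}^s\to(\mathfrak{G}^s)_n\to\operatorname{GL}_{N(n,s)}$ form a family of finite-dimensional representations whose joint kernel is $\bigcap_n\ker(\pr_n)=\{e\}$; the family is therefore faithful.

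The main obstacle I anticipate is the verification that $C_{n-1}(k)$ is co-stable in the strong sense required (i.e.~a subcomodule, not merely a subalgebra), because the coproduct mixes the torus generators $X_i^{\pm 1}$ with the higher $X_w$; once the bookkeeping of weighted bi-degrees in the coproduct is reconciled with the group-like contribution of the $X_i^{\pm 1}$, the rest is standard Hopf-theoretic machinery and the statement follows.
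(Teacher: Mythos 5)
Your overall strategy is the same as the paper's: both halves rest on the linearisation theorem for affine algebraic group schemes (\cite{W}, Theorem~3.4) applied to the truncations $(\mathfrak{G}^s)_n$, followed by the observation that the compositions $\rho_n\circ\pr_n$ form a family with trivial joint kernel because $\mathfrak{G}^s=\varprojlim_n(\mathfrak{G}^s)_n$. That part of your argument, and the second half in its entirety, is correct and matches the paper, which simply cites the theorem without exhibiting an explicit subcomodule.

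The genuine gap is in your explicit choice $V:=C_{n-1}(k)$, which you assert to be finite-dimensional. It is not. The weighted degree used to define the filtration in Proposition~\ref{prop:grad_conn} assigns weight $|w|-1=0$ to each generator $X_i^{\pm 1}$, so $C_0(k)=\operatorname{span}_k\{X_{i_1}^{\pm1}\cdots X_{i_m}^{\pm1}\}$ already contains \emph{all} Laurent monomials in $X_1,\dots,X_s$, i.e.\ it is the full (infinite-dimensional) span of the group-like elements; a fortiori $C_{n-1}(k)\supseteq C_0(k)$ is infinite-dimensional, and your $N=\dim_k V$ is undefined. The filtration $C_\bullet$ controls only the weighted degree coming from the $X_w$ with $|w|\geq 2$ and places no bound on the Laurent degree in the torus variables, so it cannot by itself furnish the finite-dimensional subcomodule you need. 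The repair is standard: either fall back entirely on the general theorem, as the paper does (every finite-dimensional subspace of a comodule over a finitely generated Hopf algebra is contained in a finite-dimensional subcomodule, \cite{W}, \S 3.3, which applied to the finite generating set $\{X_1^{\pm1},\dots,X_s^{\pm1},X_w:2\leq|w|\leq n\}$ gives the closed immersion at once), or, if you want an explicit $V$, intersect your $C_{n-1}(k)$ with a bound on the total Laurent degree in the $X_i^{\pm1}$ large enough that the right-regular coaction $\Delta X_w=\sum_{\pi}X_{w,\pi}\otimes X_{w,K(\pi)}$ keeps the left tensor legs inside $V$; this requires a separate (finite) bookkeeping of how many singleton blocks, hence factors $X_i$, can occur for $|w|\leq n$. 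Note also that neither your argument nor this repair pins the representation dimension down to the specific value $N(n,s)=\frac{s^{n+1}-1}{s-1}$ announced in the statement; like the paper's own proof, it only produces \emph{some} $\operatorname{GL}_N$.
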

\begin{proof}
By Theorem~\ref{first_main_thm}, the above are affine algebraic group schemes. As they can be linearised, cf.~\cite{W}, Theorem 3.4, the first part of the claim follows. 

The second part can be shown as follows. By~Proposition~\ref{basic_properties} we have $\mathfrak{G}^s=\varprojlim_{n\in\N} (\mathfrak{G}^s)_n$, i.e. it is an inverse limit of algebraic groups (i.e. of finitely generated $k$-algebras). So, for every $n\in\N$, we can choose a faithful finite-dimensional representation of $(\mathfrak{G}^s)_n$, which in turn can be regarded as a not-necessarily faithful representation of $\mathfrak{G}^s$ itself. However, the whole family consists of faithful items.  
\end{proof}

Let us now introduce the types of matrices we shall need. 

The set of {\bf diagonal matrices}  
$$
\mathbb{T}_n=\{\operatorname{diag}(t_1,\dots,t_n)~|~t_i\in k^{\times}\},
$$ 
in $\operatorname{GL}_n(k)$ is called the {\bf torus}, and the set of invertible {\bf upper-triangular} matrices 
$$
\mathbb{B}_n(k):=\{(a_{ij})_{i,j}\in\operatorname{GL}_n(k)~ | \text{$a_{ij}=0$ for $i>j$}\},
$$ the {\bf Borel subgroups}. 

Let 
$$
\mathbb{B}_{\infty}(k):=\varprojlim_{n\in\N} \mathbb{B}_n(k)\quad\text{and}\quad\mathbb{U}_{\infty}(k):=\varprojlim_{n\in\N} \mathbb{U}_n(k),
$$
denote the respective projective limits, which are infinite-dimensional upper-triangular matrices. Nevertheless, matrix multiplication is  well-defined, as for every entry only finitely many operations are involved when calculated. 
\begin{df}
Let $A$ be a square matrix $A$ and $E$ the unit matrix. $A$ is called {\bf unipotent} if $A-E$ is {\bf nilpotent},  i.e. there exists an integer $n\geq1$ such that $(A-E)^n=0$.\\
An element $r$ in a ring $R$ with unit $1_R$, is called unipotent, if $(r-1_R)$ is nilpotent, i.e. $\exists n\geq 1$, $(r-1_R)^n=0_R$.
\end{df}
\begin{df} 
\label{def:unipotent}
An affine group scheme $G$ is called {\bf unipotent} if every non-zero linear representation  of $G$ has a non-trivial {\em fixed vector}, i.e. there exists a non-zero $v\in V$ such that $\rho(v)=v\otimes 1$, in the associated $k[G]$-comodule.
\end{df}

More generally we have
\begin{df}
\label{df:trigonalisable}
An affine group scheme $G$  is {\bf triangulable / trigonalisable} if every non-zero linear representation of $G$ has a {\em one-dimensional  invariant subspace}, i.e. in the associated comodule $(V,\rho)$ there exists a non-zero $v\in V$ such that $\rho(v)=v\otimes a$, for some $a\in k[G]$.
\end{df}

The {upper-triangular} matrices with constant diagonal $1$ are unipotent
$$
\mathbb{U}_n(k):=\left(\begin{array}{ccccc}1 & * & \dots  & * \\0 & 1 & \ddots & \vdots  \\ \vdots& \ddots & \ddots & *  \\ 0& \cdots &  0& 1 \end{array}\right)
$$
with $*$ denoting arbitrary entries from $k$. 
In particular, the group $\mathbb{G}^n_a(k)$ is unipotent as it is represented by 
$$
\left(\begin{array}{cccccc}1 & x_1 & \cdots  & \cdots  &x_n\\0 & 1 & 0 & \dots  &0\\ \vdots& \ddots & \ddots & \ddots &\vdots\\ 0& \cdots &  0& 1 &0\\
0&0&\cdots&0&1\end{array}\right)
$$
with $x_i\in k$, $i=1,\dots, n$. 

\begin{prop} 
\label{prop:being_unipo}
Let $s,n\in\N$ and $k$ be a field. For all $n\geq1$,
\begin{itemize}
\item the algebraic groups $(\mathfrak{G}^s_+)_n(k)$ are unipotent and $\mathfrak{G}^s_+(k)$ is pro-unipotent,
\item the algebraic groups $(\mathfrak{G}^s)_n(k)$ and its inverse limit $\mathfrak{G}^s(k)$, are trigonalisable.
\end{itemize}
\end{prop}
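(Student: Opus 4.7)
The plan is to derive both claims from structural results already established in the paper, using the standard dictionary between Hopf algebras and affine group schemes.

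For unipotence of $(\mathfrak{G}^s_+)_n$, I would appeal to Proposition~\ref{prop:grad_conn}, which shows that the representing Hopf algebra $k[\bar{X}_w:2\leq|w|\leq n]$ is co-connected in the sense of Definition~\ref{df:coconnected}. A standard theorem in the theory of affine group schemes (cf.~Waterhouse, §8) asserts that over a field an affine group scheme is unipotent precisely when its coordinate Hopf algebra is co-connected (equivalently, its coradical is $k$). For a more intrinsic proof, one can instead use Proposition~\ref{split_normal_group}: it provides a subnormal series of $(\mathfrak{G}^s_+)_n$ whose successive quotients are all vector groups $\mathbb{G}_a^{s^{i+1}}$, so every composition factor is additive, which is the classical characterisation of a unipotent affine algebraic group. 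Pro-unipotence of $\mathfrak{G}^s_+$ then follows from its identification in Proposition~\ref{basic_properties} as the projective limit $\varprojlim_n (\mathfrak{G}^s_+)_n$, since pro-unipotence is by definition an inverse limit of unipotent groups.

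For trigonalisability of $(\mathfrak{G}^s)_n$, I would combine Corollary~\ref{solvable} with Proposition~\ref{basic_properties}. The former states that $(\mathfrak{G}^s)_n$ is split solvable; the latter gives the semidirect-product decomposition $(\mathfrak{G}^s)_n = \mathbb{G}_m^s \ltimes (\mathfrak{G}^s_+)_n$, so its quotient by the unipotent normal subgroup is the split torus $\mathbb{G}_m^s$. A split solvable connected affine group scheme over any field is trigonalisable (cf.~Waterhouse, §9), which is the form of the Lie--Kolchin theorem that does not require algebraic closure. In representation-theoretic terms: in any non-zero finite-dimensional representation, the torus $\mathbb{G}_m^s$ acts diagonalisably and produces a common weight vector, which the unipotent normal part $(\mathfrak{G}^s_+)_n$ then preserves up to a strictly upper-triangular perturbation, yielding the required one-dimensional invariant subspace of Definition~\ref{df:trigonalisable}. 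Trigonalisability of the inverse limit $\mathfrak{G}^s$ follows because every finite-dimensional representation of $\mathfrak{G}^s$ factors through some truncation $(\mathfrak{G}^s)_n$ and can therefore be trigonalised there.

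The step requiring the most care is the passage from the structural decomposition to an actual triangular embedding. One should order the words $w \in [s]^*_n$ by length (and then lexicographically within each length), so that the co-product formula~(\ref{co-product}) together with the weighted grading from Proposition~\ref{prop:grad_conn} forces the matrix coefficients of the regular representation to be upper triangular, with the diagonal entries supplied by the group-like elements $X_1^{\pm 1},\dots,X_s^{\pm 1}$. This explicit bookkeeping is not strictly necessary if one cites the general theorem, but it is what makes the abstract fact concrete and matches the picture of $(\mathfrak{G}^s)_n$ as a closed subgroup of $\mathbb{B}_N$ and of $(\mathfrak{G}^s_+)_n$ as a closed subgroup of $\mathbb{U}_N$ with $N = N(n,s)$ as in Proposition~\ref{coordinate_rings}.
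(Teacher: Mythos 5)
Your proposal is correct, and for the unipotence claim it coincides with the paper's own argument: the paper derives unipotence of $(\mathfrak{G}^s_+)_n(k)$ from the co-connectedness established in Proposition~\ref{prop:grad_conn} via Waterhouse's Theorem~8.3, and offers precisely your alternative via the split subnormal series of Proposition~\ref{split_normal_group}; pro-unipotence of the limit is likewise read off from Definition~\ref{def:unipotent}. For trigonalisability you take a mildly different route. The paper's primary argument is Hopf-algebraic: the filtration of $k[X_1^{\pm1},\dots,X_s^{\pm1},X_w:|w|\geq2]$ with $C_0(k)$ equal to the span of the group-like elements (Proposition~\ref{prop:grad_conn}) is exactly the coradical condition characterising trigonalisable group schemes (Waterhouse, p.~72), which yields the closed embedding into some $\mathbb{B}_N(k)$ directly. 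You instead combine Corollary~\ref{solvable} with the semidirect decomposition of Proposition~\ref{basic_properties} and quote the theorem that a connected split solvable group is trigonalisable over the base field. The paper mentions this solvability route only as an alternative and only under the extra hypothesis that $k$ be algebraically closed (so as to invoke Lie--Kolchin verbatim); your version is the sharper formulation, since splitness of the series removes the need for algebraic closure. Your final paragraph on ordering words by length to realise the triangular embedding concretely is not in the paper's proof of this proposition, but it is consistent with how Proposition~\ref{coordinate_rings} and Theorem~\ref{Higher-S-trafo} are set up.

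One small caveat in your representation-theoretic aside: to produce the common eigenvector one should first pass to the nonzero fixed subspace of the normal unipotent part --- which is stable under the whole group precisely because that subgroup is normal --- and then diagonalise the torus action on it. Starting from a torus weight vector, as you phrase it, does not quite work, since the unipotent part need not preserve that particular line. This does not affect the validity of your proof, because you rely on the cited general theorem rather than on the sketch.
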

\begin{proof}
It follows from Proposition~\ref{prop:grad_conn} that for every $n\in\N^{\times}$, the co-ordinate algebra $k[(\mathfrak{G}^s_+)_n(k)]$ is co-connected. Therefore, by~\cite{W}, Theorem 8.3, the first claim follows. 

Equivalently, by Proposition~\ref{split_normal_group} every $(\mathfrak{G}^s)_n(k)$ admits a split subnormal series with the respective quotients being isomorphic to $\mathbb{G}_a(k)$. Then by~\cite{Mil}, Corollary~2.13, the statement follows.

The second claim follows from the second statement in Proposition~\ref{prop:grad_conn}, which is equivalent to the existence of a closed embedding of $(\mathfrak{G}^s)_n(k)$ as a subgroup of some $\mathbb{B}_N(k)$, $N=N(s,n)$, cf.~\cite{W}, p. 72. 

Alternatively, if we assume $k$ also to be algebraically closed, the statement can be deduced from the {\bf Lie-Kolchin Triangularisation Theorem}, cf.~\cite{W}, Theorem~10.2, as by Corollary~\ref{solvable}, the affine algebraic groups $(\mathfrak{G}^s)_n(k)$ are solvable and by~\cite{W}, Theorem~3.4 linearisable. 

The above proofs assumed {\em algebraic} groups. However, the statements about inverse limits of unipotent and trigonalisable groups being unipotent and trigonalisable, resepctively, follow from Definitions~\ref{def:unipotent} and~\ref{df:trigonalisable}, respectively. 
\end{proof}
\begin{thm}
\label{Higher-S-trafo}
Let $k$  be a field, $s,n\in\N^{\times}$ and $N=N(n,s)$ a positive integer. There exist {\em faithful} finite-dimensional representations $\rho_n$, for
\begin{itemize}
\item $((\mathfrak{G}^s)_n(k),\boxtimes)$ as closed subgroups of the {\bf Borel groups} $\mathbb{B}_N(k)$,
\item $((\mathfrak{G}^s_+)_n(k),\boxtimes)$ as closed subgroups of the {\bf unipotent groups} $\mathbb{U}_N(k)$, 
\item $((\mathfrak{G}^s)_1(k),\boxtimes)$ as a group of {\bf multiplicative type} $\mathbb{T}_s(k)$.
\end{itemize}
\end{thm}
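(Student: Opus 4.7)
My plan is to derive this theorem as a direct corollary of Proposition~\ref{prop:being_unipo}, combined with the standard characterisations of unipotent and trigonalisable affine algebraic groups over a field $k$: an affine algebraic group admits a closed immersion into $\mathbb{U}_M(k)$ (resp.~$\mathbb{B}_M(k)$) for some $M$ if and only if it is unipotent (resp.~trigonalisable), cf.~\cite{W}, Theorem~8.3 and the discussion on p.~72. The faithfulness of the resulting $\rho_n$ is automatic from the definition of closed immersion of affine schemes.

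First I would dispose of the multiplicative case: by Proposition~\ref{basic_properties}, $(\mathfrak{G}^s)_1(k)\cong\mathbb{G}_m^s(k)$, and the tautological diagonal embedding $(r_1,\dots,r_s)\mapsto\operatorname{diag}(r_1,\dots,r_s)\in\mathbb{T}_s(k)$ is a closed faithful representation of multiplicative type. Next, for $(\mathfrak{G}^s_+)_n(k)$, which is unipotent by Proposition~\ref{prop:being_unipo}, the cited theorem of Waterhouse supplies a faithful closed immersion $\rho_n$ into $\mathbb{U}_{N}(k)$; the size of the target is controlled by the co-connected filtration $\tilde{C}_j$ of Proposition~\ref{prop:grad_conn}, whose generators are precisely the $N-s$ coordinate functions $\bar{X}_w$ with $2\le|w|\le n$, so these can be arranged along successive super-diagonal bands (ordered by weight $|w|-1$) of a strict upper-triangular $N\times N$ matrix.

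Finally, for the trigonalisable case I would proceed in two ways, for redundancy. Either I invoke directly that $(\mathfrak{G}^s)_n(k)$ is trigonalisable by Proposition~\ref{prop:being_unipo}, hence embeds as a closed subgroup of $\mathbb{B}_N(k)$ by the standard equivalence; or I construct $\rho_n$ explicitly from the semi-direct product decomposition $\mathfrak{G}^s=\mathbb{G}_m^s\ltimes\mathfrak{G}^s_+$ of Proposition~\ref{basic_properties}, placing the torus factor on the diagonal and inserting the unipotent representation of $(\mathfrak{G}^s_+)_n$ in the strict upper-triangular part of the same $N\times N$ matrix. The main obstacle is not the existence of the embedding, which is handed to us by the structure theory, but matching the target dimension to $N(n,s)=\tfrac{s^{n+1}-1}{s-1}$ exactly rather than to some larger $M$; this amounts to careful bookkeeping with the weighted grading of Proposition~\ref{prop:grad_conn}, together with the observation that the $s$ diagonal slots absorb the torus while the remaining $N-s$ slots accommodate the generators $\bar{X}_w$ for all words $w$ with $2\le|w|\le n$.
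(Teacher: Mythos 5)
Your proposal is correct and follows essentially the same route as the paper: the first two items are deduced from Proposition~\ref{prop:being_unipo} together with the standard equivalence between unipotence (resp.\ trigonalisability) and closed embeddability into some $\mathbb{U}_M$ (resp.\ $\mathbb{B}_M$) from~\cite{W}, and the torus case is handled by the obvious diagonal embedding via Proposition~\ref{basic_properties}. Your extra remarks on pinning the target dimension down to exactly $N(n,s)$ go beyond what the paper records (its proof does not address that bookkeeping at all), but they do not alter the argument.
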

Note that $\rho_n$ is a natural transformation, so that we should indicate the dependence on the field $k$, e.g. as $\rho_n(k)$, which we omitted here. 
\begin{proof}
All of the above groups are by Proposition~\ref{coordinate_rings} algebraic.  Therefore, both, 
the first and second statement follow from from Proposition~\ref{prop:being_unipo}, and the third is clear from the definitions. 
\end{proof}
\begin{rem}
Let us note that in characteristic $0$, the Lie algebra of an unipotent algebraic group is nilpotent. 
\end{rem}
\begin{df}[$\mathcal{S}$-transform]
Let $k$ be a field. A minimal faithful representation,up to isomorphism,
$$
\rho:(G^s(k),\boxtimes)\rightarrow\mathbb{B}_{\infty}(k),
$$ 
is called the {\bf $s$-dimensional $\mathcal{S}$-transform}. 

If $(\mathcal{A},\phi)$ is a non-commutative $k$-probability space and $\underline{a}=(a_1,\dots,a_s)\in\mathcal{A}^s$, then the $\mathcal{S}$-transform of $\underline{a}$ is defined as
\begin{equation}
\label{}
\mathcal{S}(\underline{a}):=\rho(\mathcal{R}(\underline{a})).
\end{equation}
\end{df}
We have
\begin{thm}
Let $(\mathcal{A},\phi)$ be a non-commutative $k$-probability space over a field $k$, and  $\underline{a}=(a_1,\dots,a_s), \underline{b}=(b_1,\dots,b_s)\in\mathcal{A}^s$ such that the sets $\{a_1,\dots,a_s\}$ and $\{b_1,\dots,b_s\}$ are combinatorially free. 
The $\mathcal{S}$-transform is a faithful morphism
$$
\mathcal{S}:\mathcal{A}^s\rightarrow \varprojlim_N\mathbb{B}_N(k),
$$
which satisfies
\begin{equation}
\label{S_transform}
\mathcal{S}(\underline{a}\star\underline{b})=\mathcal{S}(\underline{a})\cdot\mathcal{S}(\underline{b}).
\end{equation}
\end{thm}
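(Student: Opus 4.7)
The plan is to recognise $\mathcal{S}$ as the composition $\mathcal{S}=\rho\circ\mathcal{R}$, where $\rho$ is the faithful representation of $(\mathfrak{G}^s(k),\boxtimes)$ into $\varprojlim_N\mathbb{B}_N(k)$ furnished by Theorem~\ref{Higher-S-trafo}. The desired multiplicative identity then becomes the concatenation of two independently established facts: compatibility of $\mathcal{R}$ with combinatorial freeness, and the fact that $\rho$ is, by construction, a group homomorphism.

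Concretely, I would first invoke Proposition~\ref{R-trafo-group-morph}, which under the combinatorial freeness of $\{a_1,\dots,a_s\}$ and $\{b_1,\dots,b_s\}$ gives
$$
\mathcal{R}(\underline{a}\star\underline{b})=\mathcal{R}(\underline{a})\boxtimes\mathcal{R}(\underline{b})
$$
inside $\mathfrak{G}^s(k)$. Applying $\rho$, which carries $\boxtimes$ to ordinary matrix multiplication, yields
$$
\mathcal{S}(\underline{a}\star\underline{b})=\rho\bigl(\mathcal{R}(\underline{a})\boxtimes\mathcal{R}(\underline{b})\bigr)=\rho(\mathcal{R}(\underline{a}))\cdot\rho(\mathcal{R}(\underline{b}))=\mathcal{S}(\underline{a})\cdot\mathcal{S}(\underline{b}).
$$
For faithfulness, I would interpret the statement at the correct level: $\mathcal{S}$ cannot be injective on $\mathcal{A}^s$ because $\mathcal{R}$ itself forgets everything beyond the joint free cumulants, but the representation $\rho$ is faithful on $\mathfrak{G}^s(k)$ by Theorem~\ref{Higher-S-trafo}, so $\mathcal{S}$ separates any two tuples whose joint $\mathcal{R}$-transforms differ. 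Thus $\mathcal{S}$ is a faithful morphism in the sense that its kernel coincides with the kernel of $\mathcal{R}$, and through $\rho$ the ``target'' group sits as a closed subgroup of $\varprojlim_N\mathbb{B}_N(k)$.

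The main obstacle is the inverse-limit assembly: Theorem~\ref{Higher-S-trafo} supplies, for each truncation, a faithful representation $\rho_n:(\mathfrak{G}^s)_n(k)\hookrightarrow\mathbb{B}_{N(n,s)}(k)$, but to land in $\varprojlim_N\mathbb{B}_N(k)$ one needs the $\rho_n$ to form a morphism of inverse systems. The check amounts to verifying, using the $\N$-filtered Hopf algebra structure of Proposition~\ref{prop:grad_conn} together with the projection morphisms $\pr_{ij}$ of Proposition~\ref{basic_properties}, that the comodules underlying $\rho_n$ can be chosen coherently, i.e.\ that there are compatible embeddings $\mathbb{B}_{N(n,s)}\hookrightarrow\mathbb{B}_{N(n+1,s)}$ making the obvious squares commute. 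Once this is in place, the inverse limit of faithful maps is faithful, and the multiplicative identity propagates naturally through the limit.

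Finally, I would close by emphasising that everything reduces to two prior ingredients: Proposition~\ref{R-trafo-group-morph} (the algebraic content of freeness for the $\mathcal{R}$-transform) and Theorem~\ref{Higher-S-trafo} (the linearisation of the group scheme $\mathfrak{G}^s$). The theorem is therefore essentially a formal consequence of the group-scheme picture developed earlier, with the representation $\rho$ playing the role of the higher-dimensional $\mathcal{S}$-transform.
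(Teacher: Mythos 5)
Your proposal is correct and follows essentially the same route as the paper: the paper's proof is exactly the chain $\mathcal{S}(\underline{a}\star\underline{b})=\rho(\mathcal{R}(\underline{a}\star\underline{b}))=\rho(\mathcal{R}(\underline{a})\boxtimes\mathcal{R}(\underline{b}))=\rho(\mathcal{R}(\underline{a}))\cdot\rho(\mathcal{R}(\underline{b}))$, citing Proposition~\ref{R-trafo-group-morph} and the existence of $\rho$ from the linearisation results. Your additional remarks on interpreting faithfulness at the level of $\mathfrak{G}^s(k)$ rather than $\mathcal{A}^s$, and on checking compatibility of the truncated representations in the inverse limit, address points the paper leaves implicit but do not change the argument.
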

\begin{proof}
By Proposition~\ref{prop:being_unipo}, the $\mathcal{S}$-transform exists and is well-defined, up to isomorphism. So, the identity~(\ref{S_transform}) follows from the definitions and Proposition~\ref{R-trafo-group-morph}, as:
$$
\mathcal{S}(\underline{a}\star\underline{b})=\rho(\mathcal{R}(\underline{a}\star\underline{b}))=\rho(\mathcal{R}(\underline{a})\boxtimes\mathcal{R}(\underline{b}))=\rho(\mathcal{R}(\underline{a}))\cdot\rho(\mathcal{R}(\underline{b}))=\mathcal{S}(\underline{a})\cdot\mathcal{S}(\underline{b}).
$$ 
\end{proof}
\begin{rem}
The main benefit of having the general $\mathcal{S}$-transform is that we can linearise our problems by studying linear groups, i.e. matrices. However, concrete representations involve arbitrary choices, e.g. for vector bases.  

In particular, all linear representations can be derived from the regular representation. 

Also, it is clear from~Proposition~\ref{Prop:mom-groups} that equivalent statements can be made in terms of the moment series. 
\end{rem}

\section{Generalised one-dimensional free harmonic analysis}
Historically, the one-dimensional case played a central role in free-probability theory, in particular in what is called ``free harmonic analysis". Here we shall re-examine it in the light of our results.

For $s=1$, the following simplifications occur as compared to the higher dimensional case:
$$
\begin{array}{c c c}  R\langle\langle z_1,\dots z_s\rangle\rangle & \rightarrow & R[[z]] \\ \text{non-commutative} & \rightarrow & \text{commutative}\\
w\in[s]^{*} & \rightarrow & |w|\in\N\end{array}
$$
In particular the functors  
$\mathfrak{G}^1,\mathfrak{G}^1_+:\mathbf{cAlg_k}\rightarrow\mathbf{Set}$,  
have corresponding distributions $R\mapsto\Hom_{k,1}(k[z],R)$, such that
\begin{eqnarray*}
\mathfrak{G}^1(R) & \cong & \{\mu:k[z]\rightarrow R~|~\mu(z)\neq 0,\, \mu(1_k)=1_R\}, \\
\mathfrak{G}^1_+(R)&\cong&\{\mu:k[z]\rightarrow R~|~\mu(z)=1,\, \mu(1_k)=1_R \}. 
\end{eqnarray*}

Next, let us determine the changes for the boxed convolution. 
For  $|w|\geq1$, two series $f(z)=\sum_{j=1}^{\infty}\alpha_j z^j$ and $g(z)=\sum_{j=1}^{\infty}\beta_j z^j$, and a partition $\pi=\{V_1,\dots, V_p\}\in\operatorname{NC}(|w|)$, with Kreweras complement $K(\pi)=\{W_1,\dots, W_q\}$, the operators $X_{w,\pi}$ and $X_{w,K(\pi)}$, as in~(\ref{box-proj-op}), become
\begin{eqnarray*}
\label{}
X_{\pi}(f)&=&\alpha_{|V_1|}\cdots\alpha_{|V_p|}~,\\
X_{K(\pi)}(f)&=&\beta_{|W_1|}\cdots\beta_{|W_q|}~,\\
\end{eqnarray*}
and therefore the operation $\boxtimes$, as defined in~(\ref{boxed_convolution}),  specialises to
\begin{equation}
\label{}
X_{|w|}(f\boxtimes g)=\sum_{\pi\in\operatorname{NC}(|w|)}X_{\pi}(f)\otimes X_{K(\pi)}=\sum_{\begin{subarray}{c}
       \pi\in\operatorname{NC}(|w|)\\\pi=\{V_1,\dots V_p\}\\K(\pi)=\{W_1,\dots W_q\}
       \end{subarray}}\alpha_{|V_1|}\cdots\alpha_{|V_p|}\beta_{|W|_1}\cdots\beta_{|W|_q},
\end{equation}
with the product on the right being evaluated in the ring $R$.

Recall, that  ${\OO}_R:=R[[z]]$ is the ring of formal power series with $R$-coefficients, and $\Aut({\OO}_R)$ denotes its group of automorphisms, with the group operation given by composition of power series. Finally, $\aut_+({\OO}_R)$ is the subgroup consisting of automorphisms with first derivative equal to one.

For $R\in\mathbf{cAlg}_k$, the {\bf units} of the ring or formal power series $R[[z]]$ are given by
$$
R[[z]]^{\times}=R^{\times}(1+zR[[z]]),
$$
and as rings we have
$
\Lambda(R)\subsetneq R[[z]]^{\times}.
$
Further, it follows that
$$
(\mathfrak{M}^1(R),+)\cong (zR[[z]],+)\cong\mathbb{G}_a^{\N}(R).
$$
  
\begin{lem}
The following are equal as sets:
\begin{eqnarray*}
\label{}
\mathfrak{G}^1(R) & = & \aut({\OO}_R), \\
\mathfrak{G}^1_+(R) & = & \aut_+({\OO}_R), 
\end{eqnarray*}
but the corresponding groups are not isomorphic, i.e.
\begin{eqnarray*}
\label{}
(\mathfrak{G}^1(R),\boxtimes) & \ncong & (\aut({\OO}_R),\circ), \\
(\mathfrak{G}^1_+(R),\boxtimes) &\ncong& (\aut_+({\OO}_R),\circ).
\end{eqnarray*}
\end{lem}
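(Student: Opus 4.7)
The plan is to treat the set-identifications and the non-isomorphism of group structures separately.

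For the set-theoretic equalities, I would simply unpack the definitions. An element of $\mathfrak{G}^1(R)$, written as an infinite string, is a sequence $(a_1,a_2,a_3,\ldots)$ with $a_1\in R^{\times}$ and $a_n\in R$ for $n\geq 2$. Identifying such a string with the power series $\sum_{n\geq 1}a_nz^n$ produces an element of $\aut({\OO}_R)$, and conversely every element of $\aut({\OO}_R)$ is uniquely of this form. Restricting to $a_1=1_R$ yields the identification $\mathfrak{G}^1_+(R)=\aut_+({\OO}_R)$. Both bijections are tautological in the sense that the two sides are literally defined by the same data.

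For the non-isomorphism of group structures, my strategy is to exploit an asymmetry in commutativity. By Proposition~\ref{prop_non-distributive}(3), $\boxtimes$ is commutative for $s=1$, so both $(\mathfrak{G}^1(R),\boxtimes)$ and $(\mathfrak{G}^1_+(R),\boxtimes)$ are abelian. It therefore suffices to exhibit non-commutativity of composition on $\aut_+({\OO}_R)$ for any non-zero $R$, since abelian and non-abelian groups cannot be isomorphic. I would do this explicitly: take $f(z)=z+z^2$ and $g(z)=z+z^3$, then a direct computation gives
$$
(f\circ g)(z)=z+z^2+z^3+2z^4+z^6,\qquad (g\circ f)(z)=z+z^2+z^3+3z^4+3z^5+z^6,
$$
so the $z^4$-coefficients differ by $1_R$, whence $f\circ g\ne g\circ f$ as soon as $1_R\ne 0_R$. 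A fortiori, $(\aut({\OO}_R),\circ)$ is also non-abelian, which rules out any group isomorphism with the abelian $(\mathfrak{G}^1(R),\boxtimes)$.

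No serious obstacle arises; the only point requiring minor care is to check that the counterexample distinguishes $f\circ g$ from $g\circ f$ over an arbitrary non-trivial commutative $k$-algebra $R$, not merely over $\Q$ or $\C$. This reduces to the integer identity $3-2=1\ne 0$, which remains valid in any ring with $1_R\ne 0_R$. The degenerate case $R=0$ is of course implicitly excluded, since otherwise both groups degenerate to the trivial group and are trivially isomorphic.
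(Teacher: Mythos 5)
Your proof is correct and follows essentially the same route as the paper: the set equalities are definitional, and the non-isomorphism is deduced from $(\mathfrak{G}^1(R),\boxtimes)$ being abelian (by Proposition~\ref{prop_non-distributive}, part 3) while $(\aut(\mathcal{O}_R),\circ)$ is not. Your explicit witness $f(z)=z+z^2$, $g(z)=z+z^3$ with the $z^4$-coefficients differing by $1_R$ is a welcome addition, since it verifies the non-commutativity of composition over any $R$ with $1_R\neq 0_R$, a detail the paper merely asserts.
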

\begin{proof}
The statements about the sets follow directly from the definitions. Those about the groups follow from the fact that the first group is abelian and the second non-abelian. Therefore they can not be isomorphic.
\end{proof}

To every $f\in\mathfrak{G}^1(R)$ corresponds a formal power series of the form
$$
f(z)=a_1z+a_2z^2+a_3 z^3+\dots\qquad a_1\in R^{\times},
$$
and therefore it has an inverse $f^{-1}(z)$ with respect to composition of power series, which can be obtained from the co-inverse of the Faà di Bruno Hopf algebra, cf.~(\ref{FdB_coniverse}).

\begin{df}
Let $(\mathcal{A}_k,\phi)$ be a non-commutative $R-k$-probability space. For every $R\in\mathbf{cAlg}_k$, the {\bf $\mathcal{F}$-transform} is a morphism
$$
\mathcal{F}_R:\mathfrak{G}^1(R)\rightarrow R[[z]]^{\times},
$$
such that for $f\in\mathfrak{G}^1(R)$ it is given by
\begin{equation}
\label{F-trafo}
\mathcal{F}_R(f):=\frac{1}{z}f^{-1}(z),
\end{equation}
and for $a\in\mathcal{A}_k$, with $\phi(a)\in R^{\times}$, by
\begin{equation}
\label{F-trafo_cumul}
\mathcal{F}_R(a):=\frac{1}{z}\mathcal{R}_a^{-1}(z),
\end{equation}
where $\mathcal{R}_a^{-1}(z)$ is the compositional inverse. 
\end{df}
The following two statements can be deduced from the original theorems by Nica and Speicher, cf.~\cite{NS,S97}.
\begin{prop}
Let us denote by $\bullet[[z]]^{\times}$ the functor $R\mapsto R[[z]]^{\times}$.
The $\mathcal{F}$-transform is a natural isomorphism from $\mathfrak{G}^1$ to $\bullet[[z]]^{\times}$, i.e. every $\mathcal{F}_R$ is an isomorphism of abelian groups, such that 
 \[
\begin{xy}
  \xymatrix{
  \left(\mathfrak{G}^1_+(R),\boxtimes\right)\ar[d]_{\operatorname{incl.}}\ar[rr]_{{\mathcal{F}_R}}^{\sim}&&\left(\Lambda(R),\cdot\right)\ar[d]_{\operatorname{incl.}}\\
\left(\mathfrak{G}^1(R),\boxtimes\right)\ar[rr]_{\mathcal{F}_R}^{\sim}&&\left(R[[z]]^{\times},\cdot\right) 
     }
\end{xy}
\]
commutes, where $\operatorname{incl.}$ is the natural inclusion morphism. 
\end{prop}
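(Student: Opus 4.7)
The plan is to verify four standard subclaims: (a) well-definedness together with commutativity of the square, (b) bijectivity of each $\mathcal{F}_R$, (c) the multiplicative identity $\mathcal{F}_R(f\boxtimes g)=\mathcal{F}_R(f)\cdot \mathcal{F}_R(g)$, and (d) naturality in $R$.

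For (a), if $f(z)=a_1z+a_2z^2+\cdots\in\mathfrak{G}^1(R)$ with $a_1\in R^{\times}$, the compositional inverse $f^{\langle-1\rangle}$ exists in $\mathfrak{G}^1(R)$ and begins $a_1^{-1}z+\cdots$, so $\mathcal{F}_R(f)=\frac{1}{z}f^{\langle-1\rangle}(z)$ has constant term $a_1^{-1}\in R^{\times}$ and therefore lies in $R[[z]]^{\times}$; if moreover $a_1=1$, then $\mathcal{F}_R(f)\in 1+zR[[z]]=\Lambda(R)$, which makes the square commute (the vertical arrows being the evident inclusions). For (b), given any $h\in R[[z]]^{\times}$ the series $zh(z)$ lies in $\mathfrak{G}^1(R)$ (its linear coefficient $h(0)$ being invertible), and its compositional inverse $g:=(zh)^{\langle-1\rangle}$ satisfies $\mathcal{F}_R(g)=\tfrac{1}{z}(zh)=h$; the map $h\mapsto(zh)^{\langle-1\rangle}$ is a two-sided inverse to $\mathcal{F}_R$, so bijectivity follows.

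The crux is (c), which is the one-variable $\mathcal{S}$-transform theorem. My approach exploits universality: by the remark following Definition~\ref{boxconv_df}, the coefficients of $f\boxtimes g$ are universal polynomials with integer coefficients in the coefficients of $f$ and $g$, and the same is true for compositional inversion (via the Lagrange--B\"urmann formula) and for multiplication in $R[[z]]$. Hence both sides of the desired identity are universal $\mathbb{Z}$-polynomial expressions in the $f_n,g_n$, and it suffices to verify the identity over $\mathbb{C}$, where it is precisely Voiculescu's classical multiplicativity of the $\mathcal{S}$-transform, cf.~\cite{VDN,NS,S97}; the universal identity then specialises to every $R\in\mathbf{cAlg}_k$. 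A self-contained combinatorial verification is also possible: expand the coefficient of $z^n$ on the left via the Kreweras-complement sum over $\operatorname{NC}(n)$ and the coefficient on the right via Cauchy product combined with Lagrange--B\"urmann inversion, then match terms using the standard bijective combinatorics of non-crossing partitions.

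Finally, for (d), any $\varphi\in\Hom_{\mathbf{cAlg}_k}(R,R')$ extends to a continuous ring homomorphism $\varphi_{*}:R[[z]]\to R'[[z]]$ commuting with both compositional inversion (whose defining recursion has integer coefficients) and term-wise division by $z$; hence $\mathcal{F}_{R'}\circ\mathfrak{G}^1(\varphi)=\varphi_{*}\circ\mathcal{F}_R$. Combined with bijectivity from (b), this gives a natural isomorphism of functors. The main obstacle is (c): it encapsulates all the genuine free-probabilistic content of the statement, whereas the remaining steps are routine functoriality and coefficient bookkeeping. Whether one invokes the classical $\mathcal{S}$-transform theorem through the universality argument or reproduces its combinatorial proof directly, this is the step that is not formal.
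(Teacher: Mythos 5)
Your proposal is correct and in the end rests on the same foundation as the paper, which offers no proof of this proposition beyond the remark that it "can be deduced from the original theorems by Nica and Speicher": your step (c) delegates the multiplicativity to exactly that classical result. The one substantive thing you add — and it is a genuine, necessary supplement the paper leaves implicit — is the universality/base-change argument (both sides of $\mathcal{F}_R(f\boxtimes g)=\mathcal{F}_R(f)\cdot\mathcal{F}_R(g)$ are universal integer Laurent polynomials in the coefficients, so the identity verified over $\C$ specialises to every $R\in\mathbf{cAlg}_k$), together with the routine verifications of well-definedness, bijectivity via $h\mapsto(zh)^{\langle-1\rangle}$, and naturality.
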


\begin{prop} 
Let $(\mathcal{A},\phi)$ be a non-commutative $R-k$-probability space and consider random variables $a,b\in \mathcal{A}$ which are combinatorially free, with $\phi(a),\phi(b)\in R^{\times}$. The $\mathcal{F}_R$-transform of $a\cdot b$ satisfies:
$$
\mathcal{F}_R(a\cdot b)=\mathcal{F}_R(a)\cdot \mathcal{F}_R(b)~.
$$
\end{prop}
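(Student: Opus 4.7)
The strategy is to chain together the two preceding results. By Proposition~\ref{R-trafo-group-morph} specialised to $s=1$, combinatorial freeness of $\{a\}$ and $\{b\}$ gives the identity
$$\mathcal{R}_{a\cdot b}(z)=\mathcal{R}_a(z)\boxtimes\mathcal{R}_b(z)$$
inside $(\mathfrak{G}^1(R),\boxtimes)$, and the preceding proposition tells us that $\mathcal{F}_R$ is a group isomorphism $(\mathfrak{G}^1(R),\boxtimes)\stackrel{\sim}{\longrightarrow}(R[[z]]^{\times},\cdot)$. Applying $\mathcal{F}_R$ will therefore turn the boxed convolution into ordinary multiplication of power series in $R[[z]]^{\times}$, and definition~(\ref{F-trafo_cumul}) identifies the $\mathcal{F}_R$-transform of a random variable with the $\mathcal{F}_R$-transform of its $\mathcal{R}$-series.

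Before chaining these facts, I would check that all three power series $\mathcal{R}_a,\mathcal{R}_b,\mathcal{R}_{a\cdot b}$ actually lie in $\mathfrak{G}^1(R)$, i.e.\ that their linear coefficients are units. For $\mathcal{R}_a$ and $\mathcal{R}_b$ this is immediate from the hypothesis, since $\kappa_1(a)=\phi(a)\in R^{\times}$ and $\kappa_1(b)=\phi(b)\in R^{\times}$. For $\mathcal{R}_{a\cdot b}$ one must verify $\phi(ab)\in R^{\times}$; but combinatorial freeness forces the mixed cumulant $\kappa_2(a,b)=\phi(ab)-\phi(a)\phi(b)$ to vanish, so $\phi(ab)=\phi(a)\phi(b)$ is a product of units and hence a unit. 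This simultaneously ensures that $\mathcal{F}_R(a\cdot b)$ is well-defined via the compositional inverse in~(\ref{F-trafo_cumul}).

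With this verification in place the proof collapses to the four-term chain
$$\mathcal{F}_R(a\cdot b)=\mathcal{F}_R(\mathcal{R}_{a\cdot b})=\mathcal{F}_R(\mathcal{R}_a\boxtimes\mathcal{R}_b)=\mathcal{F}_R(\mathcal{R}_a)\cdot\mathcal{F}_R(\mathcal{R}_b)=\mathcal{F}_R(a)\cdot\mathcal{F}_R(b),$$
in which the four equalities use, respectively, the definition~(\ref{F-trafo_cumul}), the multiplicative case of Proposition~\ref{R-trafo-group-morph}, the group isomorphism from the preceding proposition, and~(\ref{F-trafo_cumul}) a second time.

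I do not expect a genuine obstacle here; the statement is essentially a corollary of the isomorphism $(\mathfrak{G}^1(R),\boxtimes)\cong(R[[z]]^{\times},\cdot)$ once the multiplicativity of $\mathcal{R}$ under $\boxtimes$ is available. The only point that requires a moment of thought is the well-definedness of $\mathcal{F}_R$ on $a\cdot b$, and combinatorial freeness supplies this automatically through the vanishing of the mixed cumulant $\kappa_2(a,b)$.
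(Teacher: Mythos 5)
Your proof is correct. The paper offers no argument of its own for this proposition (it merely remarks that it ``can be deduced from the original theorems by Nica and Speicher''), and your four-term chain --- definition~(\ref{F-trafo_cumul}), the multiplicative case of Proposition~\ref{R-trafo-group-morph} at $s=1$, the group isomorphism $\mathcal{F}_R:(\mathfrak{G}^1(R),\boxtimes)\to(R[[z]]^{\times},\cdot)$ from the preceding proposition, and~(\ref{F-trafo_cumul}) again --- is precisely the deduction the paper intends. The preliminary check that $\phi(ab)=\phi(a)\phi(b)\in R^{\times}$, via the vanishing of the mixed cumulant, is a worthwhile detail the paper leaves implicit; it guarantees $\mathcal{R}_{a\cdot b}\in\mathfrak{G}^1(R)$ so that the compositional inverse in the definition of $\mathcal{F}_R(a\cdot b)$ exists.
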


%We have,
%\begin{thm}
%\label{concrete_LOG}
%Let $(\mathcal{A},\phi)$ be a non-commutative $R-\Q$-probability space. Then the following isomorphisms of abelian groups hold
%\[
%\begin{xy}
%  \xymatrix{
%    (\mathfrak{G}_+^1(R),\boxtimes)\ar[dr]_{\operatorname{LOG}_R}\ar[rr]^{\mathcal{F}_R}& &(\Lambda(R),\cdot)\ar[dl]^{\frac{d}{dz}\log}\\    &(R[[z]],+) &
%               }
%\end{xy}
%\]
%with 
%\begin{equation*}
%\operatorname{LOG}_R(f)=\left(-\frac{1}{z}+\frac{1}{f^{-1}f'}\right)
%\end{equation*}
%\begin{rem}
%One should note that the existence of the natural isomorphism $\operatorname{LOG}$ follows from Thm.~\ref{main-iso}. However, its concrete form can be determined by the sequence of maps (natural transformations), as shown in the diagram above.
%\end{rem}
%\end{thm}
We know from Section~\ref{examples} and Theorem~\ref{first_main_thm}, that $\Lambda$ is represented by the Hopf algebra $\mathbf{Symm}$ and $\mathfrak{G}^1_+$ by $k[\bar{X}_{|w|}:|w|\geq2]$, respectively.

As  for every $R\in\mathbf{cAlg}_k$, the map $\mathcal{F}_R$ is an isomorphism of abelian groups,  it induces by Yoneda's Lemma a morphism of Hopf algebras. 

For $k[\bar{X}_*]:=k[\bar{X}_2,\bar{X}_3,\bar{X}_4,\dots]$ and $k[h_*]:=k[h_1,h_2,h_3,\dots]$, we have the following isomorphisms 
\[
\begin{xy}
  \xymatrix{
    \Hom_{\mathbf{cAlg}}(k[\bar{X}_*],k[\bar{X}_*]) \ar[r]& \mathfrak{G}_+^1(k[\bar{X}_*])\ar[d]^{\mathcal{F}_k(k[\bar{X}_*])}\\            \Hom_{\mathbf{cAlg}}(k[h_*],k[\bar{X}_*])\ar[r] & \Lambda(k[\bar{X}_*])
               }
\end{xy}
\]
The universal element is 
$
z+\sum_{j=2}^{\infty} \bar{X}_j z^j, 
$
and therefore we obtain by looking at $\mathcal{F}(k[\bar{X}_*])(z+\sum_{j=2}^{\infty} \bar{X}_j z^j)$, the expression for the Hopf algebra isomorphism.
\begin{prop}
The co-ordinate change between the Hopf algebra $k[\bar{X}_{|w|}:|w|\geq2]$ and $\mathbf{Symm}_k$ is given by 
\begin{equation}
\label{coord-chg_symm}
h_n\mapsto S_{\operatorname{FdB}}(\bar{X}_{n+1}), 
\end{equation}
where $S_{\operatorname{FdB}}$ is the co-inverse of the {\em Faà di Bruno Hopf algebra}~(\ref{FdB_coniverse}).
\end{prop}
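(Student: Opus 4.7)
The plan is to apply Yoneda's lemma to translate the assertion about the isomorphism $\mathcal{F}_R$ of representable group functors into the specific polynomial identity claimed. The universal element for $\mathfrak{G}^1_+$ regarded as $\specf(k[\bar{X}_*])$ is $f_{\operatorname{univ}}(z) := z+\sum_{j\geq 2}\bar{X}_j z^j \in k[\bar{X}_*][[z]]$, and the universal element for $\Lambda$ as $\spec(\mathbf{Symm}_k)$ is $H(z)=1+\sum_{n\geq 1} h_n z^n$. The ring map $\mathbf{Symm}_k \to k[\bar{X}_*]$ corresponding to $\mathcal{F}_R$ is determined by pulling back $H(z)$ along $\mathcal{F}_{k[\bar{X}_*]}$; concretely, one must compute $\mathcal{F}(k[\bar{X}_*])(f_{\operatorname{univ}})$ and then read off the coefficient of $z^n$, which by definition of $H$ is the image of $h_n$.

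Next I would unwind the definition~(\ref{F-trafo}) of $\mathcal{F}$ applied to the universal element. Writing the compositional inverse $f_{\operatorname{univ}}^{-1}(z)=z+\sum_{k\geq 2}c_k z^k$, where the $c_k\in k[\bar{X}_*]$ are the universal polynomials computing compositional inversion, one obtains
\begin{equation*}
\mathcal{F}(k[\bar{X}_*])(f_{\operatorname{univ}})(z)=\frac{1}{z}f_{\operatorname{univ}}^{-1}(z)=1+\sum_{n\geq 1}c_{n+1}z^n.
\end{equation*}
Comparing this term-by-term with $1+\sum_{n\geq 1}h_n z^n$ shows that the coordinate change sends $h_n\mapsto c_{n+1}$.

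The remaining step is to identify $c_{n+1}$ with $S_{\operatorname{FdB}}(\bar{X}_{n+1})$. Here I would invoke the defining formula~(\ref{FdB_coniverse}): for the Faà di Bruno Hopf algebra on $k[X_1,X_2,\dots]$, the antipode $S_{\operatorname{FdB}}$ satisfies $(S_{\operatorname{FdB}}X_n)(g)=X_n(g^{-1})$ with $g^{-1}$ the compositional inverse. Evaluating at $g=f_{\operatorname{univ}}$, whose tautological coordinates are $X_1\mapsto 1$ and $X_j\mapsto \bar{X}_j$ for $j\geq 2$, and using the fact that the co-inverse of the Faà di Bruno algebra specialises to the universal inversion polynomials on the sub-Hopf algebra $k[\bar{X}_*]$ (where $X_1$ is set to $1$), we obtain $(S_{\operatorname{FdB}}\bar{X}_{n+1})=c_{n+1}$ directly. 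Combining with the previous step gives the stated rule $h_n\mapsto S_{\operatorname{FdB}}(\bar{X}_{n+1})$.

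The main conceptual subtlety, and the step I expect to require the most care, is the bookkeeping at the boundary between the two Hopf algebras: the Faà di Bruno algebra naturally lives on $k[X_1,X_2,\dots]$ with $X_1$ a group-like invertible coordinate, whereas $\mathfrak{G}^1_+$ corresponds to the quotient where $X_1$ is set to $1_k$. One has to verify that $S_{\operatorname{FdB}}$ descends unambiguously to this quotient (i.e. that the Lagrange--Bürmann formula gives polynomials in $\bar{X}_2,\bar{X}_3,\dots$ alone when $\bar X_1=1$), so that the expression $S_{\operatorname{FdB}}(\bar{X}_{n+1})$ makes sense in $k[\bar{X}_*]$. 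This is clear from the explicit Lagrange formula, but worth recording explicitly to avoid ambiguity.
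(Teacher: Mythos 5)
Your proposal is correct and follows essentially the same route as the paper: apply Yoneda to the isomorphism $\mathcal{F}$, evaluate it on the universal element $z+\sum_{j\geq 2}\bar{X}_jz^j$ of $\mathfrak{G}^1_+(k[\bar{X}_*])$, and identify the resulting coefficients of $\tfrac{1}{z}f^{-1}(z)$ with the Fa\`a di Bruno antipode via $(S_{\operatorname{FdB}}X_{n+1})(f)=X_{n+1}(f^{-1})$. The only difference is that you spell out the identification $h_n\mapsto c_{n+1}$ and the descent of $S_{\operatorname{FdB}}$ to the quotient where $X_1=1$, which the paper leaves implicit.
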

For the first four co-ordinates they are given as:
\begin{eqnarray*}
h_1& = & \bar{X}_2, \\
h_2 & = & 2\bar{X}^2_1-\bar{X}_2, \\
h_3 & = & 5\bar{X}^3_1-5\bar{X}_1\bar{X}_2+\bar{X}_3, \\
h_4 & = & 14\bar{X}^4_1-21\bar{X}^2_1\bar{X}_2+6\bar{X}_1\bar{X}_3+3\bar{X}^2_2-\bar{X}_4.
\end{eqnarray*}

Let us now look at the object which motivated much of the present work.

\begin{df}
Let $(\mathcal{A}_k,\phi)$ be a non-commutative $R-k$-probability space. For every $R\in\mathbf{cAlg}_k$, the {\bf $\mathcal{S}_V$-transform} defines a map
$$
\mathcal{S}_V:\mathfrak{G}^1(R)\rightarrow R[[z]]^{\times},
$$
which for $f\in\mathfrak{G}^1(R)$ is given by
\begin{equation}
\label{S-trafo}
\mathcal{S}_V(f):=\frac{1+z}{z}f^{-1}(z),
\end{equation}
and for $a\in\mathcal{A}_k$, such that $\phi(a)\in R^{\times}$, by
\begin{equation}
\label{S-trafo_moment}
\mathcal{S}_V(a):=\frac{1+z}{z}\mathcal{M}^{-1}_a(z),
\end{equation}
where $\mathcal{M}_a^{-1}(z)$ is the compositional inverse.
\end{df}
Note, that in the above definition we should write $(\mathcal{S}_V)_R$ in order to indicate the dependence on the ring $R$, as we are dealing with natural transformations.

The following statements can be deduced from Voiculescu's original theorems, cf.~\cite{V,VDN}. 

\begin{prop}
The $\mathcal{S}_V$-transform defines a natural isomorphism, 
i.e. for every $R\in\mathbf{cAlg}_k$, and the abelian groups $((R[[z]])^{\times},\cdot)$, $(\Lambda(R),\cdot)$, $(\mathfrak{G}^1(R),\boxtimes_V)$  and $(\mathfrak{G}_+^1(R),\boxtimes_V)$, the diagram
\[
\begin{xy}
  \xymatrix{
 (\mathfrak{G}^1_+(R),\boxtimes_V)\ar[r]_{\mathcal{S}_V}^{\sim}  \ar[d]_{\operatorname{incl.} }  & (\Lambda(R),\cdot)\ar[d]_{\operatorname{incl.}}\\
     (\mathfrak{G}^1(R),\boxtimes_V)\ar[r]_{\mathcal{S}_V}^{\sim}       & ((R[[z]])^{\times},\cdot)
               }
\end{xy}
\]
commutes. For  $f,g\in\mathfrak{G}^1(R)$, we have   
$$
\mathcal{S}_V(f\boxtimes_V g)=\mathcal{S}_V(f)\cdot \mathcal{S}_V(g).
$$ 
\end{prop}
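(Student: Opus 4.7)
The plan is to reduce the multiplicativity claim to the already-established isomorphism property of the $\mathcal{F}$-transform in the preceding proposition, by exploiting that $\mathcal{S}_V$ differs from $\mathcal{F}_R$ only by the unit factor $(1+z)$, so that
\[
\mathcal{S}_V(f) = (1+z)\cdot\mathcal{F}_R(f) \qquad\text{for all } f\in\mathfrak{G}^1(R).
\]
This converts the question into a computation involving $\mathcal{F}_R$ alone.

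First I would verify well-definedness and naturality. For $f\in\mathfrak{G}^1(R)$ with leading coefficient $a_1\in R^{\times}$, the compositional inverse $f^{-1}(z)$ is of the form $a_1^{-1}z + O(z^2)$, hence $\mathcal{S}_V(f)=a_1^{-1}+O(z)$ lies in $R[[z]]^{\times}$; when $a_1=1_R$ the constant term is $1_R$, placing the image inside $\Lambda(R)$. Naturality in $R$ is automatic because compositional inversion and multiplication of power series commute with arbitrary $k$-algebra homomorphisms. Bijectivity is inherited from that of $\mathcal{F}_R$, with the inverse sending $h(z)\in R[[z]]^{\times}$ to the compositional inverse of $\tfrac{z}{1+z}h(z)$; the same construction restricts correctly between $\mathfrak{G}^1_+(R)$ and $\Lambda(R)$, yielding the commutative square of inclusions.

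The core step is multiplicativity. Combining $\mathcal{S}_V=(1+z)\mathcal{F}_R$ with the identity $\boxtimes_V=\bullet_1\boxtimes\operatorname{Moeb}\boxtimes\bullet_2$ from Proposition~\ref{V_multi_oper} and the homomorphism property of $\mathcal{F}_R$ for $\boxtimes$, I obtain
\[
\mathcal{S}_V(f\boxtimes_V g) = (1+z)\,\mathcal{F}_R(f)\cdot\mathcal{F}_R(\operatorname{Moeb})\cdot\mathcal{F}_R(g).
\]
Everything then hinges on the evaluation $\mathcal{F}_R(\operatorname{Moeb})=1+z$, which I would establish by a direct calculation: in one variable $\operatorname{Zeta}(z)=\sum_{n\geq1}z^n=\tfrac{z}{1-z}$ has compositional inverse $\tfrac{z}{1+z}$, giving $\mathcal{F}_R(\operatorname{Zeta})=\tfrac{1}{1+z}$; applying $\mathcal{F}_R$ to $\operatorname{Moeb}\boxtimes\operatorname{Zeta}=z$ (the $\boxtimes$-unit, for which $\mathcal{F}_R(z)=\tfrac{1}{z}\cdot z=1$) then yields $\mathcal{F}_R(\operatorname{Moeb})=1+z$. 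Substituting back gives
\[
\mathcal{S}_V(f\boxtimes_V g) = (1+z)^2\,\mathcal{F}_R(f)\mathcal{F}_R(g) = \mathcal{S}_V(f)\cdot\mathcal{S}_V(g),
\]
as required.

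The main obstacle I anticipate is the key identity $\mathcal{F}_R(\operatorname{Moeb})=1+z$: it rests on identifying the correct explicit closed form of $\operatorname{Zeta}$ in one variable and computing its compositional inverse. Once this is in hand, everything else is a formal consequence of the isomorphism property of $\mathcal{F}_R$ and the change-of-coordinates formula of Proposition~\ref{V_multi_oper}, so no further combinatorial input involving non-crossing partitions or free cumulants is needed.
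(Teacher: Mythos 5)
Your proposal is correct, and it is genuinely more of a proof than the paper supplies: the paper simply states that this proposition ``can be deduced from Voiculescu's original theorems'' and cites \cite{V,VDN}, giving no argument. You instead derive it from the immediately preceding $\mathcal{F}$-transform proposition (which the paper attributes to Nica--Speicher) together with the coordinate change of Proposition~\ref{V_multi_oper}. The three ingredients all check out: $\mathcal{S}_V(f)=(1+z)\,\mathcal{F}_R(f)$ is immediate from the defining formulas $\mathcal{F}_R(f)=\tfrac{1}{z}f^{-1}(z)$ and $\mathcal{S}_V(f)=\tfrac{1+z}{z}f^{-1}(z)$; the identity $f\boxtimes_V g=f\boxtimes\operatorname{Moeb}_1\boxtimes g$ is exactly the paper's formula $\boxtimes_V=\bullet_1\boxtimes\operatorname{Moeb}_s\boxtimes\bullet_2$; and your evaluation $\mathcal{F}_R(\operatorname{Moeb}_1)=1+z$ is right, since $\operatorname{Zeta}_1(z)=\tfrac{z}{1-z}$ has compositional inverse $\tfrac{z}{1+z}$, so $\mathcal{F}_R(\operatorname{Zeta}_1)=\tfrac{1}{1+z}$, while $\mathcal{F}_R$ sends the $\boxtimes$-unit $z$ to $1$. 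The factor $(1+z)^2$ then recombines correctly, and your bijectivity and restriction arguments (multiplication by the unit $1+z$ preserves $R[[z]]^{\times}$ and $\Lambda(R)$, and $\tfrac{z}{1+z}h(z)$ has invertible, resp.\ unit, leading coefficient) are sound. What your route buys is an internal consistency check of the paper's conventions: it reproves, in passing, the identity $\mathcal{R}_a^{-1}(z)=(1+z)\mathcal{M}_a^{-1}(z)$ and the commutativity of the triangle relating $\mathcal{F}$, $\mathcal{S}_V$ and $\boxtimes\operatorname{Zeta}_1$ that the paper only records in a later corollary; the cost is that the whole argument still rests on the unproved-in-the-paper multiplicativity of $\mathcal{F}_R$ with respect to $\boxtimes$, so the logical dependence on the literature is merely shifted from Voiculescu to Nica--Speicher, not eliminated.
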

\begin{prop} 
Let $(\mathcal{A},\phi)$ be a non-commutative $R-k$-probability space and consider random variables $a,b\in \mathcal{A}$ which are combinatorially free, with $\phi(a),\phi(b)\in R^{\times}$. The $\mathcal{S}_V$-transform of $a\cdot b$ satisfies:
$$
\mathcal{S}_V(a\cdot b)=\mathcal{S}_V(a)\cdot \mathcal{S}_V(b).
$$
\end{prop}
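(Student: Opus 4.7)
The plan is to reduce this multiplicative statement directly to the already-established multiplicative morphism property of the $\mathcal{S}_V$-transform on the group $(\mathfrak{G}^1(R), \boxtimes_V)$, combined with the fact that the $\mathcal{M}$-transform intertwines algebra multiplication on combinatorially free pairs with $\boxtimes_V$ on moment series.

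First, I would verify that $\mathcal{S}_V(a\cdot b)$ is actually well-defined. By formula~(\ref{S-trafo_moment}) this requires the compositional inverse $\mathcal{M}_{ab}^{-1}(z)$ to exist, which amounts to checking that the leading coefficient $\phi(ab)$ lies in $R^{\times}$. Applying the moment-cumulant formula to the two-letter word $ab$ gives
$$\phi(ab) = \kappa_2(a,b) + \kappa_1(a)\kappa_1(b),$$
and combinatorial freeness of $\{a\}$ and $\{b\}$ forces the mixed cumulant $\kappa_2(a,b)$ to vanish. Hence $\phi(ab) = \phi(a)\phi(b)$, which is invertible by hypothesis, so $\mathcal{S}_V(ab)$ is defined.

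Second, I specialise Proposition~\ref{V_multi_oper} to $s=1$, $\underline{a}=(a)$, $\underline{b}=(b)$. This yields the key multiplicative identity at the level of moment series,
$$\mathcal{M}(a\cdot b) = \mathcal{M}(a)\boxtimes_V \mathcal{M}(b),$$
with all three terms lying in $\mathfrak{G}^1(R)$ by the previous step. Finally, applying the $\mathcal{S}_V$-transform to both sides and invoking the preceding proposition, which states that $\mathcal{S}_V:(\mathfrak{G}^1(R), \boxtimes_V)\to (R[[z]]^{\times}, \cdot)$ is a group isomorphism, together with the definition $\mathcal{S}_V(a) = \mathcal{S}_V(\mathcal{M}_a)$, one concludes
$$\mathcal{S}_V(a\cdot b) = \mathcal{S}_V\bigl(\mathcal{M}(a)\boxtimes_V \mathcal{M}(b)\bigr) = \mathcal{S}_V(a)\cdot\mathcal{S}_V(b).$$

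The only real obstacle is the well-definedness check in the first step; the remainder is functorial bookkeeping that reduces to results already proved. It is worth stressing that the argument leans crucially on \emph{combinatorial} freeness, not merely classical freeness, since only the former guarantees the vanishing of $\kappa_2(a,b)$ needed to control $\phi(ab)$ when the co-domain $R$ is a general commutative $k$-algebra rather than a field.
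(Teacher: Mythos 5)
Your proof is correct. Note, however, that the paper itself offers no argument for this proposition: it simply prefaces it (and the preceding natural-isomorphism statement) with the remark that these ``can be deduced from Voiculescu's original theorems, cf.~\cite{V,VDN}.'' Your derivation is therefore a genuine addition rather than a rediscovery of the paper's route. What you do is assemble the claim entirely from the paper's internal machinery: the well-definedness check $\phi(ab)=\kappa_2(a,b)+\kappa_1(a)\kappa_1(b)=\phi(a)\phi(b)\in R^{\times}$ (which is exactly the point where \emph{combinatorial} freeness, rather than classical freeness, is indispensable over a general commutative $R$), the specialisation of Proposition~\ref{V_multi_oper} to $s=1$ giving $\mathcal{M}_{ab}=\mathcal{M}_a\boxtimes_V\mathcal{M}_b$ inside $\mathfrak{G}^1(R)$, and the group-morphism property $\mathcal{S}_V(f\boxtimes_V g)=\mathcal{S}_V(f)\cdot\mathcal{S}_V(g)$ from the immediately preceding proposition. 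This buys something concrete: Voiculescu's original theorems are proved for $\C$-valued probability spaces, so a bare citation does not by itself cover the generalised $R$-$k$-valued setting in which the proposition is stated, whereas your chain of deductions does. The one presentational caveat is that the preceding proposition on which you rely is itself only justified in the paper by the same citation, so your argument inherits that dependency for the morphism property of $\mathcal{S}_V$ on $(\mathfrak{G}^1(R),\boxtimes_V)$; everything else in your proof is self-contained and sound.
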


\begin{cor} 
Let $(\mathcal{A},\phi)$ be a non-commutative $R-k$-probability space, and let $\mathcal{A}_{1}:=\phi^{-1}(1)$. For every $a\in\mathcal{A}_{1_R}$, the diagram 
\[
\begin{xy}
  \xymatrix{
   & (\mathcal{A}_{1},\phi)\ar[dl]_{\mathcal{R}}\ar[dr]^{\mathcal{M}}&\\
  (\mathfrak{G}^1_+,\boxtimes)\ar[dr]_{\mathcal{F}}\ar[rr]^{\boxtimes\operatorname{Zeta}_1}&& (\mathfrak{G}^1_+,\boxtimes_V)\ar[dl]^{\mathcal{S}_V}\\
           &  (\Lambda,\cdot)&
               }
\end{xy}
\]
commutes, with the transformations as defined in~(\ref{moment-map}), (\ref{R-trafo}), (\ref{F-trafo}) and (\ref{S-trafo_moment}).
In particular, the relation 
$$
\label{S-trafo_R_series}
\mathcal{R}_{a}^{-1}(z)=(1+z)\cdot\mathcal{M}_{a}^{-1}(z)
$$
holds.
\end{cor}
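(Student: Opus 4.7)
The diagram consists of two triangles that share the edge $\mathcal{A}_1 \to \mathfrak{G}^1_+$. My plan is to handle them separately and then show that the final displayed identity is equivalent to commutativity of the right triangle, so that establishing the identity finishes both claims at once.

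The left triangle (with the arrow $\boxtimes\operatorname{Zeta}_1$) is essentially the one-variable restriction of Proposition~\ref{Mom_Cumul}: the Moment-Cumulant formula is precisely the assertion that $\mathcal{M}(a) = \mathcal{R}(a)\boxtimes\operatorname{Zeta}_1$ for every $a\in\mathcal{A}$, which is the commutativity statement we need. So I would just cite that proposition.

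For the right triangle, commutativity reads $\mathcal{F}(\mathcal{R}(a)) = \mathcal{S}_V(\mathcal{M}(a))$. Unwinding~(\ref{F-trafo}) and~(\ref{S-trafo}) one has
\[
\mathcal{F}(\mathcal{R}(a)) = \tfrac{1}{z}\,\mathcal{R}_a^{-1}(z), \qquad \mathcal{S}_V(\mathcal{M}(a)) = \tfrac{1+z}{z}\,\mathcal{M}_a^{-1}(z),
\]
so equality of the two is literally the asserted formula $\mathcal{R}_a^{-1}(z) = (1+z)\,\mathcal{M}_a^{-1}(z)$. Therefore both claims reduce to proving this single functional relation.

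To prove it I would use the one-variable form of the Moment-Cumulant formula combinatorially. For $a\in\mathcal{A}_1$ one has $\kappa_1=\phi(a)=1$, and $m_n=\sum_{\pi\in\operatorname{NC}(n)}\prod_{V\in\pi}\kappa_{|V|}$. Decomposing a non-crossing partition of $[n]$ according to the block containing the element $1$, and inserting an arbitrary non-crossing partition in each of the ``gaps'' between successive elements of that block, yields the classical functional equation
\[
\mathcal{M}_a(z) = \mathcal{R}_a\bigl(z(1+\mathcal{M}_a(z))\bigr),
\]
which is the one-variable specialisation of the formula in~\cite{NS}. Applying $\mathcal{R}_a^{-1}$ to both sides gives $\mathcal{R}_a^{-1}(\mathcal{M}_a(z)) = z(1+\mathcal{M}_a(z))$, and substituting $y=\mathcal{M}_a(z)$ (so $z=\mathcal{M}_a^{-1}(y)$) produces $\mathcal{R}_a^{-1}(y)=(1+y)\,\mathcal{M}_a^{-1}(y)$, as required.

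The main obstacle is the combinatorial functional equation above; everything else is definition chasing. Since the equation is already established in the literature (Nica-Speicher), I would invoke it rather than redo the non-crossing-partition bookkeeping; alternatively, a short self-contained argument via Lagrange inversion applied to the compositional inverse of $\mathcal{R}_a$ works equally well.
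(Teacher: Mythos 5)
Your proposal is correct and follows essentially the route the paper intends: the top triangle is Proposition~\ref{Mom_Cumul} specialised to $s=1$, and the bottom triangle reduces, via the definitions~(\ref{F-trafo}) and~(\ref{S-trafo_moment}), to the identity $\mathcal{R}_a^{-1}(z)=(1+z)\mathcal{M}_a^{-1}(z)$, which you correctly obtain from the Nica--Speicher functional equation $\mathcal{M}_a(z)=\mathcal{R}_a\bigl(z(1+\mathcal{M}_a(z))\bigr)$ (valid here since $\phi(a)=1$ makes both series compositionally invertible). The paper itself gives no written proof and appeals to the same classical results, so your argument matches the intended one.
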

In the one-dimensional case, as in classical probability theory, the boxed convolution $\boxtimes_V$ can be linearised, as has been proved earlier in~\cite{FMcK2}.

\begin{thm} 
\label{Thm:main_one_dim}For any $\Q$-algebra $R$, the following natural isomorphisms of {\em abelian groups} hold, such that 
$$
\begin{xy}
  \xymatrix{
      (\mathfrak{G}^1_+(R),\boxtimes)\ar[rr]^{\boxtimes\operatorname{Zeta}_1}\ar[dr]_{\mathcal{F}}  &  &   (\mathfrak{G}^1_+(R),\boxtimes_V)    \ar[dl]^{\mathcal{S}_V}\\
                            &(\Lambda(R),\cdot)\ar[d]^{\frac{d}{dz}\log} &  \\
                            &(R[[z]],+)&\\
     (\mathfrak{M}^1(R),+)\ar[rr]^{\boxtimes\operatorname{Zeta}_1}\ar[uuu]_{\operatorname{EXP}}   \ar[ur]^{\id}          &  &    (\mathfrak{M}^1(R),\boxplus_V) \ar[ul]_{\mathcal{R}_V}\ar[uuu]_{\operatorname{EXP}_V}                        }
\end{xy}
$$
commutes, with the maps:
\begin{itemize}
\item $\mathcal{R}_V$: {\bf Voiculescu's $\mathcal{R}_V$-transform} for moments, given by
$$
\mathcal{R}_V(-):=-\boxtimes \operatorname{Moeb}_1,
$$
and which satisfies $\mathcal{R}_V=\mathcal{R}(-)\boxtimes\operatorname{Zeta}_1$.
\item the `Logarithm' for cumulants 
\begin{equation*}
\operatorname{LOG}(-):=\frac{d}{dz}\log(\mathcal{F(-)}),
\end{equation*}
with inverse morphism $\operatorname{EXP}:=\operatorname{LOG}^{-1}$,
\item the `Exponential' morphism for moments
$$
\operatorname{EXP}_V=\operatorname{EXP}(-\boxtimes\operatorname{Zeta}_1)\boxtimes\operatorname{Moeb}_1,
$$
with its inverse morphism $\operatorname{LOG}_V:=\operatorname{EXP}_V^{-1}$, 
\end{itemize}
as shown in the diagram above.
\end{thm}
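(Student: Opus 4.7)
The claim assembles isomorphisms already established in the paper together with one new input: that over a $\Q$-algebra the logarithmic derivative $\tfrac{d}{dz}\log$ is an isomorphism of abelian groups $(\Lambda(R),\cdot)\to (R[[z]],+)$. This is the only step where the $\Q$-hypothesis is used; every other ingredient has already been proved over an arbitrary commutative $k$-algebra.

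First I would recall the two horizontal and the two slanted iso's. The horizontal arrows $\boxtimes\operatorname{Zeta}_1$ are iso's of abelian groups by Propositions~\ref{Mom_Cumul} and~\ref{V_multi_oper}. The slanted arrows $\mathcal{F}$ and $\mathcal{S}_V$ are iso's onto $(\Lambda(R),\cdot)$ by the two Propositions immediately above. Commutativity of the upper triangle, namely $\mathcal{S}_V\circ(\boxtimes\operatorname{Zeta}_1)=\mathcal{F}$, is a direct consequence of the identity $\mathcal{R}_a^{-1}(z)=(1+z)\,\mathcal{M}_a^{-1}(z)$ recorded in the preceding Corollary: multiplying through by $1/z$ gives $\mathcal{F}(f)=\mathcal{S}_V(f\boxtimes\operatorname{Zeta}_1)$ as a formal power series identity valid for every $f\in\mathfrak{G}^1_+(R)$, which is precisely the commutativity of the top of the diagram.

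Second I would establish the middle vertical. The classical series $\log(1+u)=\sum_{n\geq 1}(-1)^{n+1}u^n/n$ defines a group homomorphism $(\Lambda(R),\cdot)\to (zR[[z]],+)$ with inverse $\exp(u)=\sum u^n/n!$; both series require the $\Q$-structure on $R$ through their coefficients. Composing with $\tfrac{d}{dz}:(zR[[z]],+)\to (R[[z]],+)$, whose inverse $\int_0^{z}$ again uses $\Q$, yields the desired iso $\tfrac{d}{dz}\log$. The unlabelled arrow $\id:(\mathfrak{M}^1(R),+)\to(R[[z]],+)$ is the natural degree-shift identification between $zR[[z]]$ and $R[[z]]$, under which the triangle commutes tautologically.

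Third I would \emph{define} $\operatorname{EXP}:=(\tfrac{d}{dz}\log\circ\,\mathcal{F})^{-1}$; as a composition of abelian-group iso's, it is itself an iso $(\mathfrak{M}^1(R),+)\to(\mathfrak{G}^1_+(R),\boxtimes)$, closing the left column. For the right column I would set $\operatorname{EXP}_V$ to be the unique map making the right trapezoid commute; a short computation using associativity of $\boxtimes$ and $\operatorname{Moeb}_1\boxtimes\operatorname{Zeta}_1 = z$ then recovers the formula $\operatorname{EXP}_V(-)=\operatorname{EXP}(-\boxtimes\operatorname{Zeta}_1)\boxtimes\operatorname{Moeb}_1$ stated in the theorem, and dually identifies $\mathcal{R}_V=-\boxtimes\operatorname{Moeb}_1$ and $\operatorname{LOG}_V=\operatorname{EXP}_V^{-1}$. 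Naturality in $R$ is inherited from each building block.

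The main obstacle is purely bookkeeping: carefully tracking the identification between $\mathfrak{M}^1(R)=zR[[z]]$ and the full $R[[z]]$ encoded by the label $\id$, and checking that the two diagonally-defined maps $\operatorname{EXP}_V$ and $\operatorname{LOG}_V$ (forced by commutativity) coincide with the explicit formulas given. No deeper argument is required once the single analytic ingredient --- $\tfrac{d}{dz}\log$ as a $\Q$-algebra group isomorphism --- is in place, since everything else reduces to combining Propositions~\ref{Mom_Cumul},~\ref{V_multi_oper}, the $\mathcal{F}$-/$\mathcal{S}_V$-transform iso's, and the moment-cumulant identity from the Corollary above.
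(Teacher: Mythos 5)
The paper itself contains no proof of this theorem: it is stated without a proof environment, and the sentence immediately preceding it defers to the earlier preprint \cite{FMcK2} (``as has been proved earlier''). So there is no internal argument to compare yours against; judged on its own, your assembly is the natural one and is essentially sound. You correctly isolate the single place where the $\Q$-hypothesis enters, namely that $\tfrac{d}{dz}\log$ (equivalently $f\mapsto f'/f$, with inverse built from $\exp$ and termwise integration) is an isomorphism $(\Lambda(R),\cdot)\to(R[[z]],+)$, and you correctly reduce the commutativity of the upper triangle to the identity $\mathcal{R}_a^{-1}(z)=(1+z)\,\mathcal{M}_a^{-1}(z)$ of the preceding Corollary, which via $\mathcal{M}_a=\mathcal{R}_a\boxtimes\operatorname{Zeta}_1$ is precisely $\mathcal{S}_V(f\boxtimes\operatorname{Zeta}_1)=\mathcal{F}(f)$. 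Everything else is transport of structure along the isomorphisms $\boxtimes\operatorname{Zeta}_1$, $\mathcal{F}$ and $\mathcal{S}_V$; be aware that those inputs are themselves only quoted in this paper from Voiculescu and Nica--Speicher, so your proof is conditional to exactly the same extent as the surrounding text.

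One concrete point to repair: you claim a short computation ``recovers the formula $\operatorname{EXP}_V(-)=\operatorname{EXP}(-\boxtimes\operatorname{Zeta}_1)\boxtimes\operatorname{Moeb}_1$ stated in the theorem''. If you actually carry the computation out, the commutativity $\operatorname{EXP}_V\circ(\boxtimes\operatorname{Zeta}_1)=(\boxtimes\operatorname{Zeta}_1)\circ\operatorname{EXP}$ forces $\operatorname{EXP}_V(-)=\operatorname{EXP}(-\boxtimes\operatorname{Moeb}_1)\boxtimes\operatorname{Zeta}_1$, i.e.\ the conjugate of $\operatorname{EXP}$ by the coordinate change, with $\operatorname{Zeta}_1$ and $\operatorname{Moeb}_1$ in the opposite positions from the printed formula (which, as printed, does not even type-check against the diagram: $\boxtimes\operatorname{Zeta}_1$ is the map \emph{into} the $\boxplus_V$-coordinates, not out of them). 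This is almost certainly a transposition slip in the statement rather than an error in your strategy, but a proof should derive the correct expression and note the discrepancy instead of asserting agreement with the printed one. A similar caution applies to the displayed identity $\mathcal{R}_V=\mathcal{R}(-)\boxtimes\operatorname{Zeta}_1$, which read literally identifies $\mathcal{R}_V$ with $\mathcal{M}$ and is clearly intended as $\mathcal{R}_V\circ\mathcal{M}=\mathcal{R}$.
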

\begin{cor}
Let  $k$ be a field of characteristic zero, $(\mathcal{A},\phi)$ a non-commutative $R-k$-probability space, and $a,b\in \mathcal{A}$, combinatorially free random variables with distributions $\mu_a$ and $\mu_b$, respectively. The morphism $\operatorname{EXP}_V$ satisfies the ``characteristic properties" of an exponential function:
\begin{itemize}
\item $\operatorname{EXP}_V(\mu_{0_{\mathcal{A}}})=\mu_{1_{\mathcal{A}}}=\operatorname{Zeta}_1$,
\item $\operatorname{EXP}_V(\mu_a\boxplus_V\mu_b)=\operatorname{EXP}_V(\mu_a)\boxtimes_V\operatorname{EXP}_V(\mu_b)$.
\end{itemize}
If additionally we have: $\phi(a)=\phi(b)=1_R$ (and by freeness $\phi(ab)=1_R$), 
then the equality
\begin{equation*}
\label{}
\mu_{ab}=\operatorname{EXP}_V(\operatorname{LOG}_V(\mu_a)\boxplus_V\operatorname{LOG}_V(\mu_b))
\end{equation*}
holds. 
\end{cor}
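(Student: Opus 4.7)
The plan is to reduce the corollary to the group-theoretic content of Theorem~\ref{Thm:main_one_dim}, which over any $\Q$-algebra $R$ establishes that $\operatorname{EXP}_V$ is an isomorphism of abelian groups from $(\mathfrak{M}^1(R),\boxplus_V)$ onto $(\mathfrak{G}^1_+(R),\boxtimes_V)$, with inverse $\operatorname{LOG}_V$. Both ``characteristic properties'' then express nothing more than neutral-element preservation and the morphism identity for $\operatorname{EXP}_V$, while the final equation combines this with the multiplicative formula $\mu_{ab} = \mu_a \boxtimes_V \mu_b$ of Proposition~\ref{V_multi_oper}.

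First I would handle the two bullet points. The distribution $\mu_{0_{\mathcal{A}}}$ has all moments zero, hence coincides with the neutral element $0 \in (\mathfrak{M}^1(R),\boxplus_V)$. Since $\operatorname{EXP}_V$ is a group morphism, it sends $0$ to the neutral element of $(\mathfrak{G}^1_+(R),\boxtimes_V)$, which by Proposition~\ref{Prop:mom-groups} is $\operatorname{Zeta}_1$; and $\operatorname{Zeta}_1 = \mu_{1_{\mathcal{A}}}$ holds by definition of the Zeta distribution. The second bullet is immediate from the morphism property of $\operatorname{EXP}_V$ applied to $\mu_a,\mu_b$.

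For the final identity, the hypothesis $\phi(a) = \phi(b) = 1_R$ ensures $\mu_a, \mu_b \in \mathfrak{G}^1_+(R)$, placing them in the domain of $\operatorname{LOG}_V$. Combinatorial freeness together with Proposition~\ref{V_multi_oper} yields $\mu_{ab} = \mu_a \boxtimes_V \mu_b$; note that $\phi(ab)=1_R$ (following from the moment-cumulant formula and vanishing of the mixed cumulant $\kappa_2(a,b)$) keeps $\mu_{ab}$ in $\mathfrak{G}^1_+(R)$. Applying $\operatorname{EXP}_V \circ \operatorname{LOG}_V = \id$ and using that $\operatorname{LOG}_V$ intertwines $\boxtimes_V$ with $\boxplus_V$, one obtains
$$\mu_{ab} \;=\; \operatorname{EXP}_V\bigl(\operatorname{LOG}_V(\mu_a \boxtimes_V \mu_b)\bigr) \;=\; \operatorname{EXP}_V\bigl(\operatorname{LOG}_V(\mu_a) \boxplus_V \operatorname{LOG}_V(\mu_b)\bigr).$$
There is no genuine obstacle here once Theorem~\ref{Thm:main_one_dim} is in hand: the only substantive requirements are the characteristic-zero assumption, inherited from that theorem (so that the formal logarithm defining $\operatorname{LOG}$, and thereby $\operatorname{LOG}_V$ via $\operatorname{Zeta}_1$-conjugation, is well defined), and the unit-expectation condition, which is precisely what places the relevant moment series inside $\mathfrak{G}^1_+(R)$ where $\operatorname{LOG}_V$ is defined.
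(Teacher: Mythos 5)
Your proposal is correct and follows exactly the route the paper intends: the paper states this corollary without a separate proof, treating it as an immediate consequence of Theorem~\ref{Thm:main_one_dim} (the isomorphism property of $\operatorname{EXP}_V$ with inverse $\operatorname{LOG}_V$) combined with the multiplicativity $\mu_{ab}=\mu_a\boxtimes_V\mu_b$ from Proposition~\ref{V_multi_oper}. Your additional remarks on why $\phi(a)=\phi(b)=1_R$ places $\mu_a,\mu_b$ (and, via $\kappa_2(a,b)=0$, also $\mu_{ab}$) in $\mathfrak{G}^1_+(R)$ where $\operatorname{LOG}_V$ is defined are accurate and fill in the only detail the paper leaves implicit.
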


\begin{cor}
Let  $k$ and $(\mathcal{A},\phi)$ be as above, and $a\in \mathcal{A}$, such that $\phi(a)=1_R$. There exists an isomorphism of additive groups 
\begin{equation*}
\label{}
\mathcal{S}(a)\cong\frac{d}{dz}\log(\mathcal{S}_V(a)).
\end{equation*}
If we assume the natural isomorphism between $R[[z]]$ and $R^{\N}$, and denote by   $\E_{\N}(k)$ the infinite unit matrix with $k$-coefficients, then the matrix $\mathcal{S}(a)$ has the form:
$$
\mathcal{S}(a)=\left(\begin{array}{c|c}1 & d/dz(\log(\mathcal{S}_V(a)))  \\\hline 0&  \E_{\N}(k)\end{array}\right).
$$
\end{cor}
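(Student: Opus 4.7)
The plan is to combine three results already at our disposal: the isomorphism $(\mathfrak{G}^1_+(R),\boxtimes)\cong\mathbb{G}_a^{\N}(R)$ of Theorem~\ref{main-iso}, the explicit form $\operatorname{LOG}=\tfrac{d}{dz}\log\circ\,\mathcal{F}$ recorded in Theorem~\ref{Thm:main_one_dim}, and the standard upper-triangular matrix realisation of $\mathbb{G}_a^{\N}$ presented in Section~\ref{Sec:GSch}. Since by definition $\mathcal{S}(a)=\rho(\mathcal{R}(a))$ with $\rho$ a minimal faithful representation of $(\mathfrak{G}^1_+(k),\boxtimes)$, the corollary reduces to transporting the group law to the additive side and then reading off an explicit matrix model.

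First, since $\kappa_1(a)=\phi(a)=1_R$, the $\mathcal{R}$-transform $\mathcal{R}(a)$ lies in $\mathfrak{G}^1_+(R)$, so $\operatorname{LOG}$ is applicable. Using $\mathcal{F}(g)(z)=\tfrac{1}{z}g^{-1}(z)$ together with the identity $\mathcal{R}_a^{-1}(z)=(1+z)\mathcal{M}_a^{-1}(z)$ of the preceding corollary, a direct manipulation yields $\mathcal{F}(\mathcal{R}(a))=\tfrac{1+z}{z}\mathcal{M}_a^{-1}(z)=\mathcal{S}_V(a)$, and therefore
\begin{equation*}
\operatorname{LOG}(\mathcal{R}(a))=\tfrac{d}{dz}\log(\mathcal{S}_V(a))\in(R[[z]],+)\cong R^{\N}.
\end{equation*}
Second, I would invoke the faithful representation of $\mathbb{G}_a^{\N}$ described in Section~\ref{Sec:GSch}: each $\mathbb{G}_a$-factor embeds as $x\mapsto\bigl(\begin{smallmatrix}1&x\\0&1\end{smallmatrix}\bigr)$, and glueing these factors along a common identity diagonal sends $v\in R^{\N}$ to the block matrix $\bigl(\begin{smallmatrix}1&v\\0&\E_{\N}\end{smallmatrix}\bigr)\in\mathbb{B}_\infty(R)$; a one-line block-matrix calculation confirms that the product of two such matrices corresponds to adding the top rows. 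Composing this embedding with $\operatorname{LOG}$ and evaluating at $\mathcal{R}(a)$ then produces both the claimed matrix form for $\mathcal{S}(a)$ and the additive-group isomorphism $\mathcal{S}(a)\cong\tfrac{d}{dz}\log(\mathcal{S}_V(a))$.

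The point requiring most attention is that this rank-one-off-diagonal embedding really is a \emph{minimal} faithful representation of $\mathbb{G}_a^{\N}$, as the general $\mathcal{S}$-transform in Theorem~\ref{Higher-S-trafo} is specified only up to isomorphism through the choice of minimal faithful $\rho$. Minimality follows because each $\mathbb{G}_a$-factor must occupy at least one independent off-diagonal entry (no $\mathbb{G}_a^n$ is faithfully representable below dimension $n+1$), and the passage to the inverse limit yields a well-defined embedding into $\mathbb{B}_\infty$ since only finitely many terms contribute to any given entry of a matrix product. The remaining verifications --- that our block matrices in fact lie in the pro-unipotent part $\mathbb{U}_\infty$ predicted by Proposition~\ref{prop:being_unipo} and that the group law is preserved at all finite truncations --- are routine once this is in place.
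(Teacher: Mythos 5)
The paper states this corollary without any written proof, and your overall route --- chaining the identity $\mathcal{F}(\mathcal{R}(a))=\tfrac{1}{z}\mathcal{R}_a^{-1}(z)=\tfrac{1+z}{z}\mathcal{M}_a^{-1}(z)=\mathcal{S}_V(a)$, hence $\operatorname{LOG}(\mathcal{R}(a))=\tfrac{d}{dz}\log\mathcal{S}_V(a)$, from Theorem~\ref{Thm:main_one_dim} and its preceding corollary, and then feeding the result into the row-vector matrix model of $\mathbb{G}_a^{\N}$ displayed just before Proposition~\ref{prop:being_unipo} --- is exactly the intended one. The first half of your argument is correct and complete.

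The gap is in your minimality claim. Because the $\mathcal{S}$-transform is \emph{defined} as a minimal faithful representation up to isomorphism, the specific block form $\left(\begin{smallmatrix}1&v\\0&\E_{\N}\end{smallmatrix}\right)$ is only forced if the rank-one-off-diagonal embedding really is minimal, and your justification --- ``no $\mathbb{G}_a^n$ is faithfully representable below dimension $n+1$'' --- is false for $n\geq 4$. For example, $(a,b,c,d)\in\mathbb{G}_a^4(k)\mapsto\left(\begin{smallmatrix}I_2&A\\0&I_2\end{smallmatrix}\right)$, with $A$ the $2\times2$ matrix assembled from $a,b,c,d$, is a faithful $4$-dimensional representation; more generally, by Schur's bound a commutative unipotent subgroup of $\operatorname{GL}_N$ can have dimension up to $\lfloor N^2/4\rfloor$, so in characteristic zero the minimal faithful dimension of $\mathbb{G}_a^n$ grows like $2\sqrt{n}$ rather than $n+1$. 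As a result your argument pins down $\mathcal{S}(a)$ only for one particular, demonstrably non-minimal, choice of faithful representation. Since the paper never defines ``minimal'' for these pro-algebraic groups and tacitly makes the same choice of model, this is arguably a defect of the statement as much as of your proof; but as written, the step ``this embedding is minimal, hence coincides with $\rho$ up to isomorphism'' does not go through, and without it the displayed matrix form of $\mathcal{S}(a)$ is not a consequence of the definition.
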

\subsection*{Acknowledgements}
The first author thanks the MPIfM in Bonn for its current hospitality. Both authors thank the MPIfM for a previous opportunity to collaborate on this project.

Authors addresses:\\
Roland Friedrich, 53115 Bonn, Germany\\
rolandf@mathematik.hu-berlin.de\\
John McKay, Dept. Mathematics, Concordia University,\\
Montreal, Canada H3G 1M8\\ mac@mathstat.concordia.ca

\begin{thebibliography}{00}

\bibitem{D72} M. Demazure, {\it Lectures on $p$-divisible groups}, Springer-Verlag, Berlin, (1972)


\bibitem{FN} M. Février, A. Nica, {\it Infinitesimal non-crossing cumulants and free probability of type B}, J. Funct. Anal. 258, 2983-2023 (2010).

\bibitem{F} M. Février, {\it Higher order infinitesimal freeness}, arXiv:1009.2389 [math.OA] (2010)
\bibitem{FG-B} H. Figueroa, J. Gracia-Bondía, {\it Combinatorial Hopf algebras in quantum field theory I},  arXiv: [hep-th], 2005. 
\bibitem{FMcK1} R. Friedrich, J. McKay, {\it Free Probability and Complex Cobordism}, C. R. Math. Rep. Acad. Sci. Canada Vol. {\bf 33} (4), pp. 116-122 (2011) 

\bibitem{FMcK2} R. Friedrich, J. McKay, {\it Formal Groups, Witt vectors and Free Probability}, arXiv 2012

\bibitem{FMcK3} R. Friedrich, J. McKay, {\it The $S$-transform in arbitrary dimensions}, arXiv 2013

\bibitem{Froh} A. Fröhlich, {\it Formal Groups}, LNM {\bf 74}, Springer, 1968

\bibitem{H1} M. Hazewinkel, {\it Formal Groups and Applications}, Academic Press, 1978

\bibitem{H2} M. Hazewinkel, {\it Witt vectors. Part 1}, revised version: 20. April 2008

\bibitem{Hum} J. E. Humphreys, {\it Linear Algebraic Groups}, GTM, Springer, (1975)

\bibitem{Krew} G. Kreweras, {\it Sur les partitions non-croisées d’un cycle}, Discrete Math. {\bf 1},  333–350, 1972

\bibitem{MN1} M. Mastnak, A. Nica, {\it Hopf algebras and the logarithm of the $S$-transform in free probability \-- Extended abstract}, DMTCS proc. {\bf AK}, 2009, 623-634.

\bibitem{MN2} M. Mastnak, A. Nica, {\it Hopf Algebras and the Logarithm of the $S$-Transform in Free Probability}, Trans. Amer Math Soc., Vol. {\bf 362}, Nr. 7, 2010.

\bibitem{Mil} J.S. Milne, {\it Basic Theory of Affine Group Schemes}, Version 1, March 11. 2012.

\bibitem{N} A. Nica, {\it R-TRANSFORMS IN FREE PROBABILITY},
Lectures in the special semester `Free probability theory and operator spaces', IHP,
Paris, 1999

\bibitem{NS} A. Nica, R. Speicher, {\it Lectures on the Combinatorics of Free
Probability}, LMS LNS {\bf 335}, Cambridge University Press, (2006)
\bibitem{S97}
R. Speicher, {\it Free probability theory and non-crossing partitions},
S{\' e}minaire Lotharingien de Combinatoire, {\bf B39c}, 38pp., (1997)

\bibitem{S98}
R. Speicher, {\it Combinatorial theory of the free product with amalgamation and operator-valued free probability theory},
Memoirs of the American Mathematical Society, {\bf 132}, no. 627, (1998)

\bibitem{V}
  D.V. Voiculescu, {\it Multiplication of certain non-commuting random
variables}, J. Operator Theory, {\bf 18}, 223-235, (1987)

\bibitem{VDN}
  D.V. Voiculescu, K.J. Dykema, A. Nica {\it Free random variables: a noncommutative probability approach to free products with applications to random matrices, operator algebras, and harmonic analysis on free groups}, AMS, (1992)
\bibitem{W} W.C. Waterhouse,  {\it Introduction to Affine Group Schemes}, Springer 1979
\end{thebibliography}
\end{document}